
\documentclass[11pt]{amsart}

\usepackage[left=1in,right=1in,top=1in,bottom=1in]{geometry}


\usepackage{microtype}  


\usepackage{mathtools}
\usepackage{bbm}  

\swapnumbers

\usepackage{hyperref}

\usepackage{amssymb,amsfonts,amsmath,amsthm}
\usepackage[mathscr]{eucal}
\usepackage[alphabetic]{amsrefs}
\usepackage{stmaryrd}

\usepackage[ps,matrix,arrow,curve,cmtip]{xy}

\title{Looijenga line bundles in complex analytic elliptic cohomology}
\author{Charles Rezk}
\date{ \today}
\address{Department of Mathematics \\
University of Illinois \\ 
Urbana, IL}
\email{rezk@illinois.edu}
\thanks{The author was supported by NSF grant DMS-1406121.}


\numberwithin{equation}{section}

\makeatletter
  \let\c@subsection\c@equation
\makeatother

\theoremstyle{plain}   

\newtheorem{thm}[subsection]{Theorem}
\newtheorem{prop}[subsection]{Proposition}
\newtheorem{cor}[subsection]{Corollary}
\newtheorem{lemma}[subsection]{Lemma}

\theoremstyle{remark}
\newtheorem{rem}[subsection]{Remark}    
\newtheorem{exam}[subsection]{Example}

\theoremstyle{plain}

\setcounter{tocdepth}{1}


\raggedbottom

\tolerance=3000
\hbadness=4000
\hfuzz=1pt

\begin{document}


\newcommand{\margnote}[1]{\mbox{}\marginpar{\tiny\hspace{0pt}#1}}

\def\lambada{\lambda}


\newcommand{\id}{\operatorname{id}}
\newcommand{\colim}{\operatorname{colim}}
\newcommand{\llim}{\operatorname{lim}}
\newcommand{\Cok}{\operatorname{Cok}}
\newcommand{\Ker}{\operatorname{Ker}}
\newcommand{\Image}{\operatorname{Im}}
\newcommand{\op}{{\operatorname{op}}}
\newcommand{\Aut}{{\operatorname{Aut}}}
\newcommand{\End}{{\operatorname{End}}}
\newcommand{\Hom}{{\operatorname{Hom}}}

\newcommand*{\ra}{\rightarrow}
\newcommand*{\lra}{\longrightarrow}
\newcommand*{\xra}{\xrightarrow}
\newcommand*{\la}{\leftarrow}
\newcommand*{\lla}{\longleftarrow}
\newcommand*{\xla}{\xleftarrow}

\newcommand{\ho}{\operatorname{ho}}
\newcommand{\hocolim}{\operatorname{hocolim}}
\newcommand{\holim}{\operatorname{holim}}

\newcommand*{\realiz}[1]{\left\lvert#1\right\rvert}
\newcommand*{\len}[1]{\left\lvert#1\right\rvert}
\newcommand{\set}[2]{{\{\,#1\mid#2\,\}}}
\newcommand*{\tensor}[1]{\underset{#1}{\otimes}}
\newcommand*{\pullback}[1]{\underset{#1}{\times}}
\newcommand*{\powser}[1]{[\![#1]\!]}
\newcommand*{\laurser}[1]{(\!(#1)\!)}
\newcommand{\ndiv}{\not|}
\newcommand{\pairing}[2]{\langle#1,#2\rangle}

\newcommand{\F}{\mathbb{F}}
\newcommand{\Z}{\mathbb{Z}}
\newcommand{\N}{\mathbb{N}}
\newcommand{\R}{\mathbb{R}}
\newcommand{\Q}{\mathbb{Q}}
\newcommand{\C}{\mathbb{C}}

\newcommand{\point}{{\operatorname{pt}}}
\newcommand{\Map}{\operatorname{Map}}
\newcommand{\eev}{\wedge}
\newcommand{\sm}{\wedge} 

\newcommand*{\mc}{\mathcal}
\newcommand*{\mf}{\mathfrak}
\newcommand*{\mr}{\mathrm}
\newcommand*{\ul}{\underline}
\newcommand*{\ol}{\overline}
\newcommand*{\wt}{\widetilde}
\newcommand*{\wh}{\widehat}

\newcommand{\dfn}{\textbf}

\def\noloc{\;{:}\,}

\def\defeq{\overset{\mathrm{def}}=}

\newcommand{\forcepar}{\mbox{}\par}

\newcommand{\Sip}{\Sigma^\infty_+}
\newcommand{\Oi}{\Omega^\infty}

\newcommand{\T}{\mathbb{T}}

\newcommand{\gh}{\mathrm{gh}}

\newcommand{\Borel}{\mathrm{Borel}}

\newcommand{\ch}{\operatorname{ch}}

\newcommand{\Ell}{\mathrm{Ell}}
\newcommand{\EllTate}{\mathrm{Ell}_\mathrm{Tate}}
\newcommand{\Spec}{\operatorname{Spec}}
\newcommand{\an}{\mathrm{an}}
\newcommand{\der}{\mathrm{der}}

\newcommand{\Top}{\mathrm{Top}}
\newcommand{\Diff}{\operatorname{Diff}}

\newcommand{\Sp}{\operatorname{Sp}}

\newcommand{\QF}{\mathrm{QF}}

\newcommand{\cp}{\operatorname{Cup}}
\newcommand{\pp}{\operatorname{Pont}}
\newcommand{\act}{\operatorname{Act}}
\newcommand{\mult}{\operatorname{Mult}}
\newcommand{\inv}{\operatorname{Inv}}

\newcommand{\Ab}{\mathrm{Ab}}

\newcommand{\sym}{\mathrm{sym}}
\newcommand{\antisym}{\mathrm{antisym}}

\newcommand{\Sym}{\operatorname{Sym}}

\renewcommand{\Im}{\operatorname{Im}}
\newcommand{\Sh}{\mathrm{Sh}}
\newcommand{\Coh}{\mathrm{Coh}}

\newcommand{\An}{\mathrm{An}}

\newcommand{\ext}{\mathrm{ext}}

\newcommand{\tf}{\mc{P}}

\newcommand{\bbss}{\,\backslash\!\backslash\,}

\newcommand{\Lift}{\operatorname{Lift}}

\newcommand{\Lie}{\operatorname{Lie}}

\begin{abstract}
We present a calculation, which shows how the moduli of complex
analytic elliptic curves arises naturally from the Borel cohomology of
an  extended moduli space of $U(1)$-bundles on a torus.  Furthermore,
we show how the analogous calculation, applied to a moduli 
space of principal bundles for a $K(\Z,2)$ central extension of
$U(1)^d$ give rise to Looijenga line bundles.  We then 
speculate on the relation of these calculations to the construction of
complex analytic 
equivariant elliptic cohomology. 
\end{abstract}

\maketitle


\section{Introduction}

In this note, we describe some aspects of how complex analytic
elliptic curves arise naturally from the cohomology of certain spaces
which parameterize principal bundles on orientable genus 1 surfaces.
This suggests how elliptic cohomology emerges from certain derived
complex analytic spaces associated to dimensional reduction applied to
2-dimensional field theories.

\subsection{Complex analytic elliptic cohomology}

Complex analytic equivariant elliptic cohomology was first defined by
Grojnowski \cite{grojnowski-delocalized-elliptic}\footnote{Originally
  circulated as  a preprint in 1994; see
  \cite{ando-miller-grojnowskis-ell-coh}.}.  In its most basic 
formulation, given 
\begin{itemize}
\item
a compact connected abelian Lie group $G$
(i.e., $G\approx U(1)^d$), with cocharacter lattice $B=\Hom(U(1),G)$,
and 
\item 
an elliptic curve $C_\tau = \C/\Z\tau+\Z$ for $\Im\tau>0$, 
\end{itemize}
he obtains an equivariant coholomolgy theory
\[
\Ell_G^* \colon h\Top_G^{\mr{fin}}  \ra \Coh(C_\tau\otimes B) 
\]
on  $G$-spaces homotopy equivalent to finite $G$-CW-complexes, taking
values in coherent sheaves of  $\mc{O}_{C_\tau\otimes
  B}$-modules on the complex analytic abelian variety $C_\tau\otimes B\approx
C_\tau^d$.  

Grojnowski descibes his construction as ``declocalized''.  That is,
$\Ell_G^*(X)$ 
is produced by gluing together certain localizations of the values of
Borel equivariant 
cohomology rings $H^*(X^H\times_G EG;\C)$ for various subgroups $H$ of
$G$.  Conceptually, one can 
regard this as a ``reverse engineered'' version of a character sheaf,
by analogy with the interpretation of $\C\otimes K_G(X)$ as a sheaf
over the multiplicative group $\mathbb{G}_m$, whose localizations at
various points 
of $\mathbb{G}_m$ are computed, in terms of standard localization
theorems, in terms of Borel cohomology (e.g., as in
\cite{baum-brylinski-macpherson-delocalisee}).   

Grojnowski's theory has been extended and used to  explain aspects of
elliptic genera, notably the rigidity of the Ochanine genus
\cite{rosu-elliptic-rigidity}, and the modularity of the Witten genus
\cite{ando-basterra-witten-genus},
\cite{ando-sigma-orientation-analytic}.   A significant  feature of
this  theory is the ablility to twist by
a \emph{level}, which in the above formulation is described in terms
of tensoring sheaves with 
the \emph{Looijenga line bundle} associated to a quadratic form on the
cocharacter lattice $B$
\cite{grojnowski-delocalized-elliptic}*{\S3.3}.  Looijenga's theta
functions appear 
explicitly in the Kac character formula, which can be identified with
the calculation of a Gysin map in elliptic cohomology 
\cite{ando-power-op-loop-group}, \cite{ganter-elliptic-weyl}.  

This construction of analytic elliptic cohomology, though productive,
is somewhat ad hoc, and technically rather intricate.  Furthermore, we
should expect more from the theory.  In particular,
\begin{enumerate}
\item it should take values not (merely) in sheaves on a scheme or
  complex analytic space, but rather in sheaves on a \emph{derived}
  scheme or 
  complex analytic space, and 
\item it should in some sense classify \emph{two-dimensional
    reductions} of certain 
  kinds   2-dimensional field theories.
\end{enumerate}
These should nowadays be much more approachable 
goals than was the case when Grojnowski originally defined the theory.
For point (1), there is  well-developed machinery for 
constructing cohomology theories from \emph{derived} geometric objects
\cite{lurie-elliptic-survey}.  Furthermore, there is a direct
construction of a derived algebraic scheme realizing rational equivariant
elliptic cohomlogy for $G=U(1)$ following Grojnowski's delocalized
approach \cite{greenlees-rational-equivariant-elliptic}.  
 Point (2) is more difficult; however,
there has been partial success in relating elliptic cohomology to
field theory (following the program of Segal
\cite{segal-elliptic-cohomology}), and many features  of the
relationship are  understood  (see, e.g.,
\cite{stolz-teichner-susy-and-gen-coh}).

\subsection{This purpose and results of this paper}
\label{subsec:results}

We are motivated by the observation that elliptic cohomology at the
Tate curve should be associated to 
\emph{one-dimensional reduction} of 1-dimensional field theories.
Very roughly, Tate elliptic cohomology should arise as some kind of
equivariant $K$-theory for extended loop groups.  By the ``extended
loop group'' $\mc{L}^{\mathrm{ext}}G$ of $G$, we really   
mean the topological groupoid whose objects are certain principal $G$-bundles
$P\ra \T$, over a circle $\T=\R/\Z$, and whose morphisms are maps $(P\ra \T)\ra
(P'\ra \T)$ of $G$-bundles covering a rotation of the circle.  

Our point
of view is inspired by that of \cite{ganter-stringy-power},
\cite{ganter-power-op-orbifold-tate-k}, which considers the special
case of finite groups $G$, in which case $\mc{L}^{\mathrm{ext}}G$ is a Lie
groupoid, and thus comes with a well-defined equivariant $K$-theory.
Furthermore, Kitchloo has defined a version of equivariant $K$-theory
for certain 
Kac-Moody groups \cite{kitchloo-dominant-k-theory}.  Using this, he
constructs \cite{kitchloo-quantization-modular-functor-eq-ell}, for
loop groups on simple and simply connected $G$, a version of
$G$-equivariant elliptic cohomology associated to the Tate curve.
It turns out that Looijenga line bundles arise naturally in this framework.

The purpose of this note is to describe calculations inspired by the
idea of \emph{two-dimensional reduction}.  Thus, (i) the circle $\T$ is
replaced with an orientable genus 1 surface 
$\Sigma$ (e.g., $\T^2$), and (ii) equivariant $K$-theory is replaced with Borel
cohomology with complex coefficients.  We restrict attention to a
limited class of equivariance groups $G$, namely (i) tori $G=U(1)^d$,
or (ii) ``central extensions'' $\wt{G}=U(1)^d\times_\phi K(\Z,2)$ of a
torus $G$ by $K(\Z,2)$,  
according to a class $\phi\in H^4(BG;\Z)$.  

We summarize our calculations as follows; precise statements
are given in \S\S\ref{sec:analytic-moduli}--\ref{sec:looijenga}.  Fix 
\[
\Sigma=\text{orientable genus 1 surface},\qquad G=\text{topological
  group},
\]
and consider the ``wreath product'' group
\[
\mc{W}(G)=\mc{W}^\Sigma(G):= \Map(\Sigma, G)\rtimes \Diff(\Sigma),
\]
where $\Diff(\Sigma)$ is the group of diffeomorphisms\footnote{That we
  use the diffeomorphism group here is not significant, since we will only
  use homotopy invariant features of this action.  Thus, in its place
  we could use the homeomorphism group of $\Sigma$, or even the monoid
  of self-homotopy equivalences of $\Sigma$, each of which have the
  same homotopy type as $\Diff(\Sigma)$.}
(not
necessarily orientation preserving).
Note that $\Map(\Sigma, G)$ is the gauge group of $\Sigma\times G\ra
\Sigma$,  the trivial
$G$-bundle over $\Sigma$, and thus $\mc{W}(G)$ is an ``extended gauge
group''.  Its classifying space $B\mc{W}(G)$ is thus a homotopy theoretic
moduli space for the data (smooth genus 1 surface, principal
$G$-bundle). 

Let $\mc{W}_0(G)\subseteq \mc{W}(G)$ denote the identity component,
with discrete quotient $\ol{\mc{W}}(G)=\mc{W}(G)/\mc{W}_0(G)$.  Thus
there is a natural action $\ol{\mc{W}}(G)\curvearrowright
B\mc{W}_0(G)$ on the classifying space of the connected subgroup.
For the $G$ we will consider, the cohomology ring 
$H^*(B\mc{W}_0(G);\C)$ is concentrated in even degree, whence we
obtain an action
\[
(\ol{\mc{W}}(G)\times \C^\times)^\op \;\curvearrowright \; \Spec H^*(B\mc{W}_0(G);\C)
\]
on an affine complex variety, where $\C^\times$ acts linearly on
$H^2$.  Let 
\[
\mc{X}_G := [\Spec H^*(B\mc{W}_0(G);\C)]_\an \smallsetminus
\{\text{bad}\},
\]
which is a complex analytic space obtained as the ``analytification''
of the complex variety, with a certain closed subset (described in 
\S\ref{subsec:geometric-picture}) removed.  In our
examples $\mc{X}_G$ is always smooth.  The object we are interested in
is
\[
\mc{M}_G := (\ol{\mc{W}}(G)\times \C^\times) \bbss
\mc{X}_G, 
\]
the stacky quotient in complex manifolds.  We compute that
\begin{align*}
  \mc{M}_e &\approx \mc{M} = \text{the moduli stack of (complex analytic)
             elliptic curves,}
\\ 
  \mc{M}_{U(1)} &\approx  \mc{E} = \text{the universal elliptic curve over
                  $\mc{M}$,}
\\
  \mc{M}_{U(1)^d} &\approx \mc{E}^d = \mc{E}\times_{\mc{M}}\cdots
                    \times_{\mc{M}}\mc{E} 
                    = \text{the $d$-fold product of $\mc{E}$,}
\\
  \mc{M}_{K(\Z,2)} &\approx \mathbb{G}_m\times \mc{M} = \text{the
                     multiplicative group as a trivial bundle of
                     groups over
                     $\mc{M}$,}
\\ 
  \mc{M}_{U(1)^d\times_\phi K(\Z,2)} &\approx \mc{P}_\phi = \text{principal
                           $\mathbb{G}_m$-bundle associated to $\mc{L}_\phi$}
\end{align*}
where $\mc{L}_\phi\ra \mc{E}^d$ is the ``Looijenga line bundle''
associated to $\phi\in H^4(BU(1)^d,\Z)$, regarded as a quadratic
function $\phi\colon H_2BU(1)^d=\Z^d\ra \Z$.  The first three cases of
the computation are easy observations, and are described
in \S\ref{sec:analytic-moduli}.  The main purpose of this paper is
prove the last two cases, which  are stated in
\S\ref{sec:looijenga}.

\subsection{Organization of this paper}

The basic observation is the following:
the universal complex analytic elliptic curve arises \emph{naturally}
from the cohomology of such moduli spaces.  We present this
observation in \S\ref{sec:analytic-moduli}.  I have not seen this
observation stated in this way before; however, it is closely related
to an observation by Etingof and Frenkel about coadjoint actions in
double loop groups, a relationship we describe briefly in
\S\ref{subsec:remarks-double-loop}.  

In \S\ref{sec:looijenga}, we replace $G=U(1)^d$ with
$\wt{G}=U(1)^d\times_\phi K(\Z,2)$, the extension associated to a class
$\phi\in H^4(BG;\Z)$, and observe that our formulation naturally gives
Looijenga-type line bundles.  This is stated as  \eqref{thm:main-thm},
which is our main result.

In \S\ref{sec:isogenies} we observe how isogenies of complex analytic
elliptic curves fit naturally into this story, via finite covering
maps of genus 1 surfaces.

In \S\ref{sec:remarks-on-formalism} we speculate as to how these constructions
might give rise to \emph{derived} elliptic curves (in an analytic
setting) following the pattern described in
\cite{lurie-elliptic-survey}, and to elliptic cohomology theories of
Grojnowski type. 
We only sketch a picture here; setting
this up formally would involve confronting a definition of derived
complex analytic space, which is beyond the scope of this note.  We
also describe the ``stacky'' dependence of our constructions on the
group $G$, and note what happens in the simpler 1-dimensional case
(where $\Sigma$ is a circle).

The remainder of the paper
(\S\ref{sec:spaces-in-2-3}--\S\ref{sec:proof-of-theorem}) 
is taken up with the proof of the main result \eqref{thm:main-thm},
which is itself a derived from a more general and coordinate invariant
formulation \eqref{thm:general-thm}.

\subsection{Conventions}

At various points we need to consider the action of a group on another
group (always from the left).  We will
sometimes use the notation $g\propto h$ for such an action, so as to
typographically distinguish it from $gh$ a product of group elements.
When $G$ acts on $H$ from the left, a semidirect product $K$ is always
a group with subgroups  $G$
and $H$  that $GH=HG=K$ and $G\cap H=\{1\}$, and such
that $ghg^{-1}=g\propto h$.   There are two distinct but 
canonically isomorphic constructions of such:
$G\ltimes H$ and $H\rtimes G$ with group laws $(g,h)\cdot(g',h')=(gg',
(g'^{-1}\propto h)h')$ and $(h,g)\cdot (h',g')=(h(g\propto h'), gg')$
respectively.
In \S\ref{sec:conventions}, we describe the homotopy theoretic
conventions we use, primarily in order to establish the sign
conventions we need in
(\S\ref{sec:spaces-in-2-3}--\S\ref{sec:proof-of-theorem}).

\subsection{Acknowledgements}

I would like to thank Matt Ando and Dan Berwick-Evans for stimulating
conversations which have helped direct the shape of this work.

\section{Analytic moduli of elliptic curves, vs.\ homotopic
   moduli of genus $1$ surfaces}
\label{sec:analytic-moduli}

\subsection{Moduli  of elliptic curves over $\C$}

The classical uniformization theory of Weierstrass says that
\begin{enumerate}
\item every elliptic curve is isomorphic, as a complex manifold, to
  $\C/\Lambda$ for some lattice $\Lambda$, (i.e., a subgroup $\Lambda =
  \Z t_1+\Z t_2$ such that $\R\otimes \Lambda=\C$), with neutral
  element at the origin, and 
\item every map $\C/\Lambda\ra \C/\Lambda'$ between such complex
  manifolds fixing the neutral element is given by multiplication by a
  complex scalar.
\end{enumerate}
That is, such curves correspond to lattices in $\C$ up
to scaling by a non-zero complex number.

This can be enriched to a desciption of the moduli stack of such curves.
Let 
\[
\mc{X} := \set{(t_1,t_2)}{\R t_1+\R t_2=\C} \subset \C^2. 
\]
We have a group action 
\[
GL_2(\Z)\times \C^\times\;\curvearrowright\; \mc{X}
\] by 
\begin{equation}\label{eq:sl2z-on-X}
A\propto (t_1,t_2) = (at_1+bt_2, ct_1+dt_2),\qquad A=(\begin{smallmatrix}
  a&b\\ c&d\end{smallmatrix})\in GL_2(\Z),
\end{equation}
and 
\begin{equation}\label{eq:Ctimes-on-X}
\lambda\propto (t_1,t_2) = (\lambda t_1,\lambda t_2), \qquad \lambda\in
\C^\times. 
\end{equation}
Points of the quotient space $(GL_2(\Z)\times \C^\times)\backslash
\mc{X}$ are in bijective correspondence to homothety-equivalence
classes ($\Lambda\sim \lambda\Lambda$) of lattices, i.e., to 
isomorphism classes of
elliptic curves.  It turns out that the moduli stack is in fact the
stack 
quotient 
\[
\mc{M} := (GL_2(\Z)\times \C^\times) \bbss \mc{X}.
\]
For our purposes, we do not need to worry about the general notion of
stacks.  It is sufficient to remember that information defining
$\mc{M}$ is precisely contained in the group action, so that (for
instance), sheaves on the stack $\mc{M}$ are precisely equivariant
sheaves on $\mc{X}$.  

\begin{rem}
The stack $\mc{M}$ is an orbifold, though the above does not present
it as such.  The continuous group $\C^\times$ acts freely on
$\mc{X}$, so that $\C^\times\backslash \mc{X}\approx
\C\smallsetminus \R$ defined by $(t_1,t_2)\mapsto
\tau=t_1/t_2$ gives an identification with the double-half plane.  The
residual $GL_2(\Z)$-action descends to an 
action on $\C\smallsetminus\R$ with finite isotropy, whence
$\mc{M}\approx GL_2(\Z)\bbss
(\C\smallsetminus\R)$.   
\end{rem}

\begin{rem}
Instead of $\mc{X}$ we could use  $\mc{X}^+=\set{(t_1,t_2)\in
  \mc{X}}{\Im(t_1/t_2)>0}$, so 
  $\mc{M}\approx (SL_2(\Z)\times \C^\times)\bbss \mc{X}^+\approx
  SL_2(\Z)\bbss \mc{H}$ where $\mc{H}=\set{\tau\in \C}{\Im\tau>0}$.
\end{rem}

\subsection{The universal elliptic curve}

The universal elliptic curve $\mc{E}\ra \mc{M}$ can be modelled by a
map $C\ra \mc{X}$, with fiber $C_{(t_1,t_2)} = \C/(\Z t_1+\Z t_2)$
over $(t_1,t_2)\in \mc{X}$, together with a lift of the group action
on $\mc{X}$.   Since the fibers are themselves 
quotients by a free action, we can decribe the universal curve as a
stack quotient, via the action
\[
( GL_2(\Z)\ltimes \Z^2)\times \C^\times \;\curvearrowright\; \mc{X}\times
\C = \set{(t_1,t_2,y)\in \C^2\times \C}{\R t_1+\R t_2=\C}
\]
defined by 
\begin{align}\label{eq:action-on-XtimesC}
\begin{aligned}
A\propto (t_1,t_2,y) &= (at_1+bt_2, ct_1+dt_2,y), 
\\
(m_1,m_2) \propto (t_1,t_2,y) &= (t_1,t_2,y+m_1t_1+m_2t_2), 
\\
\lambda\propto (t_1,t_2,y) &= ( \lambda t_1, \lambda
t_2,\lambda y),
\end{aligned}
&&
\begin{aligned} 
A\in GL_2(\Z),
\\
(m_1,m_2)\in \Z^2,
\\
\lambda\in \C^\times.
\end{aligned}
\end{align}
Thus, the stack quotient 
\[
\mc{E} := ((GL_2(\Z)\ltimes \Z^2)\times \C^\times)
\bbss \mc{X}\times \C
\]
presents the universal curve over $\mc{M}$.

\subsection{Moduli of genus $1$ surfaces}

Fix a smooth surface $\Sigma$, closed and orientable of genus $1$.  We write
\[
\Diff(\Sigma)\supset  \Diff_0(\Sigma),
\]
for the group of diffeomorphisms and its identity component.
The classifying space
$B\Diff(\Sigma)$ can be viewed as a homotopy-theoretic moduli space
of orientable (but not oriented) genus $1$-surfaces.

For convenience in describing calculations, we we use the model
$\Sigma:=\T^2=\R^2/\Z^2$. 
Then $\Diff(\Sigma)$ is weakly equivalent, as a topological group,
to the subgroup $\T^2\rtimes GL_2(\Z)$ (acting on $\T^2$ in the
evident way from 
the left) \cite{earle-eells-diffeo-cpt-riemann}.  

Therefore $B\Diff_0(\Sigma)\approx B\T^2$, which carries an evident
action by 
$GL_2(\Z)=\Diff_+(\Sigma)/\Diff_0(\Sigma)$.   We may thus consider the
induced action 
\[
GL_2(\Z)^\op\; \curvearrowright\; H^*(B\Diff_0(\Sigma);\C). 
\]
It is immediate that
\[
H^*(B\Diff_0(\Sigma);\C)= H^*(B\T^2;\C)\approx \C[t_1,t_2],
\qquad t_1,t_2 \in H^2, 
\]
with $GL_2(\Z)$ action given by the precisely the formula
\eqref{eq:sl2z-on-X}.  The cohomology also carries a natural
$\C^\times$ action, determined by the grading, which coincides with
\eqref{eq:Ctimes-on-X}. 

\subsection{Universal degree $0$ line bundle on a genus $1$ surface}

Now consider the group 
\[
\mc{W}(U(1)):= \Map(\Sigma, U(1))\rtimes \Diff(\Sigma),
\]
which has identity component $\mc{W}_0(U(1)):= \Map_0(\Sigma,U(1))
\rtimes \Diff_0(\Sigma)$, and set $\ol{\mc{W}}(U(1)):=
\pi_0\mc{W}(U(1)) = \mc{W}(U(1))/\mc{W}_0(U(1))$.  The classifying
space $B\mc{W}(U(1))$ carries the universal example of a degree 0
complex line bundle over a smooth genus 1 surface.  

Using  $\Sigma=\T^2$, we obtain an explicit finite dimensional model
for $\mc{W}(U(1))$ (up to homotopy equivalence), namely
\[
\bigl(\Hom(\T^2,U(1)) \times U(1)\bigr) \rtimes \bigr(GL_2(\Z)\ltimes 
\T^2\bigr). 
\]
That is, the homomorphism $\Hom(\T^2,U(1))\times U(1)\ra
\Map(\Sigma,U(1))$ defined by\footnote{We write the group laws on
  $\T^2=\R^2/\Z^2$ and $U(1)\approx \R/\Z$ additively, and use the
  evident isomorphism $\Z^2=\Z^{1\times 2}\approx \Hom(\T^2,U(1))$.}  
$(m,y)\mapsto
((s_1,s_2)\mapsto  
y+m_1s_1+m_2s_2)$ is a homotopy equivalence, and is invariant under the
evident action of $GL_2(\Z)\rtimes \T^2\subset \Diff(\Sigma)$.  We can
rebracket this as 
\[
\bigl(GL_2(\Z)\ltimes \Z^2\bigr) \ltimes \bigl(\T^2\times U(1)\bigr),
\]
using the left action $GL_2(\Z)\ltimes \Z^2 \curvearrowright
\T^2\times U(1)$ given by $(A,m)\propto (t,y) = (At, y+m_1t_1+m_2t_2)$.  

The induced action $\ol{\mc{W}}(U(1))^\op \curvearrowright
H^*(B\mc{W}_0(U(1));\C)$ thus has the form
\[
(GL_2(\Z)\ltimes \Z^2)^\op\;\curvearrowright\; H^*(B(\T^2\times U(1));\C).
\]
We easily read off that
\[
H^*(B(\T^2\times U(1));\C) \approx \C[t_1,t_2,y],\qquad t_1,t_2,y\in H^2,
\]
with $GL_2(\Z)\ltimes \Z^2$ action given precisely by the first two
formulas from 
\eqref{eq:action-on-XtimesC}\footnote{We can regard cohomology
  classes ``$t_1$'', ``$t_2$'' and ``$y$'' as coordinate functions on the space
  $\mc{X}\times \C=\{(t_1,t_2,y)\}$, so the formulas of
  \eqref{eq:action-on-XtimesC} also describe how to pull back such
  functions.}.  The grading of cohomology corresponds 
to the $\C^\times$-action from \eqref{eq:action-on-XtimesC}.  

\subsection{The geometric picture}
\label{subsec:geometric-picture}

As in the introduction (\S\ref{subsec:results}) we write
\[
\mc{W}(G)= \mc{W}^\Sigma(G) := \Map(\Sigma,G) \rtimes \Diff(\Sigma),
\]
with group law $(\psi,\phi)\cdot
(\psi',\phi') = (\psi\cdot (\psi'\circ\phi^{-1}), \phi\circ \phi')$,
for the extended gauge group of a trivial principal $G$-bundle over
$\Sigma$; hence the classifying space $B\mc{W}(G)$ carries the
universal example of a 
trivializable principal $G$-bundle over a genus 1 surface.  We let
$\mc{W}_0(G)\subseteq \mc{W}(G)$ denote the identity component, and set
$\ol{\mc{W}}(G) = \mc{W}(G)/\mc{W}_0(G)= \pi_0\mc{W}(G)$.  

Now assume that we restrict to groups $G$ for which
$H^*(B\mc{W}_0(G);\C)$ is concentrated in even degrees.  
We obtain an action
\[
(\ol{\mc{W}}(G) \times \C^\times)^\op \;\curvearrowright \;
H^*(B\mc{W}_0(G);\C)
\]
where $\C^\times$ acts by scalar multiplication on $H^2$, and note
that this action 
is \emph{functorial} with respect to the group $G$ and homomorphisms,
i.e., $\phi\colon G\ra G'$ induces a map of cohomology rings which is
compatible with the the group actions in the evident way.  In
particular, the tautological homomorphism $G\ra e$ induces a map
$\pi\colon B\mc{W}_0(G)\ra B\Diff_0(\Sigma)$ which is
invariant under the action of $\ol{W}(G)$.  

We can now take the analyticification of the resulting affine scheme
over $\C$.  Define
\[
\mc{X}_G :=  [\Spec H^*(B\mc{W}_0(G);\C)]_\an \smallsetminus B_G,
\]
where $B_G$ is the closed (in the analytic topology) subset consisting
of $\C$-points $p$ such that the composite
\[
H^2(B\Diff_0(\Sigma);\R)\ra H^2(B\Diff_0(\Sigma);\C) \xra{\pi^*}
H^2(B\mc{W}_0(G);\C) \xra{p} \C
\]
is \emph{not} a bijection.  Thus $\mc{X}_G$ is the preimage of
$\mc{X}=\mc{X}_e\subset [\Spec H^*(B\Diff_0(\Sigma);\C)]_\an\approx
\C^2$ with respect to the map induced by $\pi$, and is invariant under
the action of $\ol{\mc{W}}(G)\times \C^\times$.  Hence we obtain
\[
\mc{M}_G := \ol{\mc{W}}(G)\times \C^\times \bbss \mc{X}_G.
\]

\subsection{Products of elliptic curves vs.\ degree $0$ torus-bundles}

Consider $G=U(1)^d$, with $d\geq1$.  As in the case of $d=1$, we have
a finite dimensional model
\[
\bigl( \Hom(\T^2,U(1)^d)\times U(1)^d) \bigr) \rtimes \bigl(GL_2(\Z)
\ltimes \T^2)\bigr) \xra{\sim} \mc{W}(U(1)^d)
\]
which can be rebracketed as 
\[
\bigl(GL_2(\Z) \ltimes \Z^{d\times 2}\bigr) \ltimes \bigl(\T^2\times
U(1)^d\bigr).  
\]
Thus
\[
H^*(B\mc{W}_0(U(1)^d);\C) \approx H^*(B(\T^2\times U(1)^d);\C) \approx
\C[t_1,t_2,y_1,\dots,y_d],
\]
with induced action by $(\ol{\mc{W}}(G)\times \C^\times)^\op$ described much
as in \eqref{eq:action-on-XtimesC}, except that we have
\begin{align*}
m\propto (t_1,t_2, y) &=
(t_1,t_2,y+m_1t_1+m_2t_2),
& m=(m_1,m_2)\in \Hom(\Z^2,\Z^d)\approx (\Z^d)^2,
\end{align*}
where $y=(y_1,\dots,y_d)$.  Geometrically, this gives
\[
GL_2(\Z)\rtimes \Z^{d\times 2} \times \C^\times\; \curvearrowright\;
\mc{X}_{U(1)^d}=\mc{X}\times \C^d =\set{(t_1,t_2,y_1,\dots,y_d)\in
  \C^2\times \C^d}{\R
  t_1+\R t_2=\C},
\]
whence $\mc{M}_{U(1)^d}\ra \mc{M}_e$ describes the  the $d$-fold fiber product
of $\mc{E}$ over $\mc{M}$.

\subsection{Remarks on the relation to double loop groups}
\label{subsec:remarks-double-loop}

The construction we just described seems to be a variant of one
described in \cite{etingof-frenkel-cent-ext-current-groups-2d}.   Here
we will briefly describe how to relate the two.

Fix a compact and simply connected Lie group $G$, with maximal torus $T$ and Weyl
group $W$.  By
analogy with loop groups, one has the \emph{double
  loop group}
\[
LLG:= \Map(\T^2,G)
\]
(where now we consider smooth maps), and also the \emph{extended
  double loop group}
\[
LL^\ext G := \Map(\T^2,G)\rtimes \T^2,
\]
where the $\T^2$ acts by rotations.  The group $LL^\ext G$ itself has
an action of $\Aut(\T^2)=GL_2(\Z)$.  This describes a subgroup
$LL^\ext G\rtimes GL_2(\Z)$ of $\mc{W}(G)$.   

The extended double loop group contains a finite dimensional
torus
\[
T^\ext=T^\ext_G:=T\times \T^2,
\]
 where the $T$ corresponds to constant maps $\T^2\ra\{*\}\ra  T\subseteq G$.
The Weyl group of $T^\ext\subset LL^\ext G$ is the
\emph{elliptic Weyl group}
\[
W_{\Ell} = W\ltimes \Hom(\Z^2, \check{T})
\]
of $G$, where $\check{T}=$ the cocharacter lattice of $T$.  The Lie
algebra $\Lie(T^\ext)=\Lie(T\times \T^2)$ inherits an action by $W_{\Ell}$, as well as an
action by $GL_2(\Z)$.

For the trivial group $e$ we have $T^\ext_e=\T^2$, and
$\Lie(\T^2)\otimes\C \approx \C^2$.  Let $\mc{X}_e\subseteq
\Lie(\T^2)\otimes \C$ denote the subset consisting of pairs of
elements in $\C$ which generate a lattice, and define $\mc{X}_G$ as
the preimage with repsect to the evident projection $\pi$:
\[\xymatrix{
{\mc{X}_G} \ar@{>->}[r] \ar[d]
& {\Lie(T\times \T^2)\otimes \C} \ar[d]^{\pi}
\\
{\mc{X}_e} \ar@{>->}[r]
& {\Lie(\T^2)\otimes \C}
}\]
The action by $GL_2(\Z)\ltimes W_\Ell$ restricts to one $\mc{X}_G$,
and acts fiberwise with respect to $\pi$, so that for each $t\in
\mc{X}_e$ we obtain $W_\Ell\curvearrowright \pi^{-1}(t)$.  
When $G=T$ is itself a torus, this  is evidently the same action as
the one we described in the 
previous section, related via the Chern-Weil
isomorphism
\[
\Sym( \Lie(T\times \T^2)^*\otimes \C) \xra{\sim} H^*(B(T\times
\T^2);\C) = H^*(B\mc{W}_0(G);\C).  
\]

Etingof and Frenkel \cite{etingof-frenkel-cent-ext-current-groups-2d}
describe the following construction.  Given a \emph{simply connected}
$G$ with complexification $G_\C$, together with a choice of
holomorphic structure (=complex
structure  + invariant holomorphic 1-form) on $\Sigma$, they describe a ``coadjoint
action'' of $\Map(\Sigma, G_\C)$ on $\Lie(\Map(\Sigma,G_\C))$
(actually a 
twisted version of the usual coadjoint action which depends on the  chosen
holomorphic structure on $\Sigma$).   They show that orbits for
this action correspond to isomorphism classes of holomorphic principal $G$-bundles on
$\Sigma$.  A generic class of orbits are given by the restriction to
the maximal torus: the orbits of  $W_\Ell$ acting on $\Lie(T_\C)$ correspond to
the ``flat and unitary'' holomorphic $G$-bundles on $\Sigma$. 

Examining the formulas in Etingof and Frenkel, one sees that the holomorphic data for
$\Sigma$ corresponds to a choice of point $t\in \mc{X}_e\subset
\Lie(\T^2)\otimes \C$, and that their action $W_\Ell\curvearrowright
\Lie(T_\C)$ coincides with the  action of  $W_\Ell$ on the fiber
$\pi^{-1}(t)\subset \Lie(T\times \T^2)\otimes \C$ that we described
above.   (Note: in the formulation of Etingof and
Frenkel, they do not identify holomorphic structures on $\Sigma$ with points
in $\Lie(\T^2)\otimes \C$; rather, the use the holomorphic structure
to construct a central $\C^\times$-extension of
$\Map(\Sigma,\C^\times)$, so that their coadjoint action is the
natural one on a slice of the Lie algebra of their central extension \cite{etingof-frenkel-cent-ext-current-groups-2d}*{\S3}.)


\section{Looijenga line bundles}
\label{sec:looijenga}

We now describe the main result of this paper: if in our construction
we replace $G=U(1)^d$ with
$\wt{G}=U(1)^d\times_\phi K(\Z,2)=$ a ``central'' extension of $U(1)^d$ by
$K(\Z,2)$, we 
get  the total space of the principal bundle of a Looijenga line
bundle.  We start with the special case of $d=0$, i.e., $\wt{G}=K(\Z,2)$.

\subsection{$\wt{G}=K(\Z,2)$ gives the multiplicative group}

We describe our results in the case that $\wt{G}=K(\Z,2)$.  
We have that
\[
\ol{\mc{W}}(K(\Z,2)) \approx GL_2(\Z)\ltimes \Z
\]
where $GL_2(\Z)$ acts on $\Z=H^2(\Sigma;\Z)$ via the determinant.
We have
\[
H^*(B\mc{W}_0(K(\Z,2));\C) \approx
\C[t_1,t_2,x_1,x_2]/(t_1x_1+t_2x_2),\qquad t_i,x_k\in H^2.
\]
The resulting action 
\[
(\ol{\mc{W}}(K(\Z,2))\times \C^\times)^\op\;\curvearrowright\;
H^*(B\mc{W}_0(K(\Z,2));\C) 
\]
is described by 
\begin{align}\label{eq:action-for-KZ3}
\begin{aligned}
n \propto (t_1,t_2,x_1,x_2) &= (t_1,t_2,x_1-nt_2, x_2+nt_1)
\\
A\propto (t_1,t_2,x_1,x_2) &= (at_1+bt_2, ct_1+dt_2,
\frac{dx_1-cx_2}{\det A}, \frac{-bx_1+ax_2}{\det A}), 
\\
\lambda\propto (t_1,t_2,x_1,x_2) &= ( \lambda t_1, \lambda
t_2,\lambda x_1, \lambda x_2),
\end{aligned}
&&
\begin{aligned} 
n\in \Z,,
\\
A\in GL_2(\Z),
\\
\lambda\in \C^\times.
\end{aligned}
\end{align}
The associated geometric object is
\[
\mc{X}_{K(\Z,2)} = \set{(t,x)\in\C^2}{t_1x_2+t_2x_2=0,\; \R t_1+\R t_2=\C}
\subset \mc{X}\times \C^2.
\]
The projection $\mc{X}_{K(\Z,2)}\ra \mc{X}$ is  a trivial line bundle over
$\mc{X}$, via the nowhere vanishing section
$(t_1,t_2)\mapsto (t_1,t_2,-t_2,t_1)$.  The action 
$\Z\curvearrowright \mc{X}_{K(\Z,2)}$ is fiber-by-fiber, via
translation along this 
section, and so acts freely.  Thus 
\[
\Z\backslash\!\backslash \mc{X}_{K(\Z,2)} \approx \Z\backslash
\mc{X}_{K(\Z,2)} \approx \mc{X}\times \C^\times.
\]
Explicitly, $\Z\backslash \mc{X}_{K(\Z,2)} \xra{\sim} \mc{X}\times
\C^\times$ is given by 
\[
(t_1,t_2,x_1,x_2) \mapsto (t_1,t_2, e^{2\pi i(x_1/t_2)}).
\]
The $GL_2(\Z)\times \C^\times$ action descends to an action on $\mc{X}\times
\C^\times$ of the form
\[
(A,\lambda)\propto (t_1,t_2,u) = (\lambda(at_1+bt_2),
\lambda(ct_1+dt_2),  u^{1/\det A}).
\]
Since elements $A\in GL_2(\Z)$ with $\det A=-1$ switch the two
components of $\mc{X}$, we see that
\[
\mc{M}_{K(\Z,2)} \approx \mc{M}\times \C^\times.
\]
This  is naturally a group object over $\mc{M}$, via the group structure
on $K(\Z,2)$.

\begin{rem}
This will follow from the general theorem \eqref{thm:general-thm}.  To see how it 
arises, consider the Serre spectral sequence for  
$B\Map_0(\Sigma, K(\Z,2))\ra B\mc{W}_0(K(\Z,2))\ra
B\Diff_0(\Sigma)$, which has $E_2^{p,q}=\C[t_1,t_2]\otimes
\C[x_1,x_2,\epsilon]$, with $\len{\epsilon}=(0,3)$.  The only differential
is $d_2(\epsilon)=\pm(t_1x_1+t_2x_2)$.  The terms $(\dots,
\cdots-nt_2, \cdots +nt_1)$ in the first line of \eqref{eq:action-for-KZ3}
ultimately derive from the non-degenerate pairing $H_1\T^2\xra{\sim} H^1\T^2$
adjoint to the Pontryagin product on $H_*\T^2$.
\end{rem}

\subsection{Quadratic functions}
\label{subsec:quadratic-functions}

Let $B$ and $C$ be finitely generated free abelian groups.  A
\dfn{quadratic function} $\phi\colon B\ra C$ is a function such that
\begin{itemize}
\item $\beta(b,b'):= \phi(b+b')-\phi(b)-\phi(b')$ is bilinear, and 
\item $\phi(nb)=n^2\phi(b)$ for $n\in \Z$.
\end{itemize}
The symmetric bilinear form $\beta\colon B\otimes B\ra C$ is called
the \dfn{Hessian form} of $\phi$.  Note that
$\phi(b)=\tfrac{1}{2}\beta(b,b)$, so $\beta$ determines $\phi$.

Let $\Gamma_2B$ be the second degree part of the divided power algebra
on $B$; since $B$ is 2-torsion free, $\Gamma_2B\approx (B\otimes
B)^{\Sigma_2}$.  The function $\gamma_2\colon B\ra \Gamma_2B$ given by
$b\mapsto b\otimes b$ is the universal quadratic function out of $B$,
so that
\[
\Hom(\Gamma_2B, C) \xra[\sim]{\wt\phi\mapsto \wt\phi\circ \gamma_2}
\{\text{quadratic $B\xra{\phi} C$}\}
\]
is a bijection.  We will use the notation $\wt\phi$ for the
homomorphism associated to a quadratic function $\phi$.

A \dfn{bilinear extension} of $\phi$ is any bilinear (but not
necessarily symmetric) map $\omega\colon B\times B\ra C$ such that
$\phi(b)=\omega(b,b)$.  Such extensions always exist (because the
exact sequence $0\ra \Gamma_2 B\ra B\otimes B\ra \Lambda^2B\ra 0$
splits), and any two such extensions differ by an alternating form.  

In terms of a choice of coordinates $B\approx \Z^d$, we have
\begin{equation}\label{eq:quad-func-coords}
\phi(y)= \tfrac{1}{2}\sum_{i,j} c_{ij}y_iy_j, \qquad
\beta(y,y')=\sum_{i,j} c_{ij}y_iy_j', \qquad \omega(y,y')=\sum_{i,j}
d_{ij}y_iy_j', 
\end{equation}
where $(c_{ij})$ is a symmetric integer matrix with $c_{ii}\in 2\Z$,
and $(d_{ij})$ any integer matrix such that $c_{ij}=d_{ij}+d_{ji}$.

\subsection{Case of $\wt{G}=$ extension of $U(1)^d$ by $K(\Z,2)$}
\label{subsec:case-of-f}

Given a topological group $G$ and a map $\wt\phi\colon BG\ra K(\Z,4)$, we
have a fibration sequence of the form
\[
B\wt{G}\ra BG \xra{\wt\phi} K(\Z,4).
\]
We define $\wt{G}$ to be the (based) loop space of the fiber
$B\wt{G}$, modelled as a topological group.  We call this $\wt{G}$ the
\dfn{$K(\Z,2)$-central extension of $G$} corresponding to $\phi$
(though as realized above the extension might not be central).  

Given $G=U(1)^d$, set  $B:= \pi_1 G = H_2(BG,\Z)=\Z^d$, so that
up to homotopy maps $\wt\phi$ correspond to elements 
\[
\wt\phi\in H^4(BU(1)^d;\Z) \approx \mathrm{Sym}^2H^2(BU(1)^d;\Z)
\approx \Hom(\Gamma_2B,\Z),
\]
and thus to quadratic functions $\phi\colon B\ra \Z$.

\begin{thm}\label{thm:main-thm}
Let $\wt{G}$ be a $K(\Z,2)$-central extension of $G=U(1)^d$ associated
to a quadratic function $\phi$ with Hessian form $\beta$, and choose a
bilinear extension $\omega$ of $\phi$.  
\begin{enumerate}
\item We have
\[
\ol{\mc{W}}(\wt{G}) \approx GL_2(\Z)\ltimes E,
\]
where $E$ is a central extension
\begin{equation}\label{eq:cent-ext-F}
0\ra \Z\ra E\ra \Hom(\Z^2,\Z^d)\ra 0,
\end{equation}
defined so that the group law on  $E=\Hom(\Z^2,\Z^d)\times \Z$ takes
the form
\[
(m_1,m_2,n)\cdot (m_1',m_2',n') = \Bigl(m_1+m_1',\; m_2+m_2',\;
n+n'+\bigl(\omega(m_1,m_2')-\omega(m_2,m_1')\bigr)\Bigr) 
\]
where $n,n'\in \Z$ and $m, m'\in 
\Hom(\Z^2,\Z^d)\approx (\Z^d)^2$.

The group $GL_2(\Z)$ acts on $E$ (from the left) by 
\[
A\propto (m_1,m_2,n) = (\frac{dm_1-cm_2}{\det A},\;
\frac{-bm_1+am_2}{\det A},\;n).
\]
\item We have
\[
H^*(B\mc{W}_0(\wt{G});\C) \approx \C[t_1,t_2, y_1,\dots,y_d,
x_1,x_2]/(\phi(y)+ (t_1x_1+t_2x_2)), \qquad t_i,y_j,x_k\in H^2.
\]
\item The action $\ol{\mc{W}}(\wt{G})^\op\;\curvearrowright \;
  H^*(B\mc{W}_0(\wt{G});\C)$ is given (in terms of the
  description in (1)) by
\begin{align}\label{eq:action-for-F}
\begin{aligned}
   n\propto (t,y,x) &= (t_1,t_2,\; y,\; x_1-nt_2, x_2+nt_1),
\\
  m\propto (t,y,x) &= (t,\; y+mt,\; x-\beta(y,m)-\omega(mt,m))
\\ 
A\propto (t,y,x)& = (at_1+bt_2, ct_1+dt_2,\; y,\; \frac{dx_1-cx_2}{\det
  A}, \frac{-bx_1+ax_2}{\det A}), 
\end{aligned}
&&
  \begin{aligned}
    n\in \Z,
  \\
   m\in \Hom(\Z^2,\Z^d),
 \\
  A\in GL_2(\Z)
  \end{aligned}
\end{align}
where $t=(t_1,t_2)$, $y=(y_1,\dots,y_d)$, $x=(x_1,x_2)$.
\end{enumerate}
\end{thm}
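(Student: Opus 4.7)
The plan is to build a finite-dimensional model for $\mc{W}(\wt{G})$ analogous to the one for $\mc{W}(U(1)^d)$ in \S2, and to read off the three assertions from it. Since $\wt{G} \simeq U(1)^d \times K(\Z,2)$ as a space (the rational suspension $H^4(BG;\C) \to H^3(G;\C)$ kills decomposable classes, and $\wt\phi$ is decomposable), the identity component decomposes as a space $\mc{W}_0(\wt{G}) \simeq U(1)^d \times \T^2_{\mathrm{inn}} \times K(\Z,2) \times \T^2_{\mathrm{out}}$: here $\T^2_{\mathrm{out}} \simeq \Diff_0(\Sigma)$, the factors $U(1)^d$ and $K(\Z,2)$ come from the constant-map embedding $\wt{G} \hookrightarrow \Map(\Sigma,\wt{G})$, and $\T^2_{\mathrm{inn}}$ realizes $\pi_1\Map_0(\Sigma, K(\Z,2)) \cong H^1(\Sigma;\Z)$. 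The topological group structure is twisted in two ways: through the $\wt\phi$-class (coupling $U(1)^d$ to $K(\Z,2)$), and through the canonical Poincar\'e pairing $H_1\Sigma \otimes H^1\Sigma \to \Z$ (coupling $\T^2_{\mathrm{out}}$ to $\T^2_{\mathrm{inn}}$).

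For part (1), $\ol{\mc{W}}(\wt{G}) \cong \pi_0\Map(\Sigma,\wt{G}) \rtimes GL_2(\Z)$ because $\Diff_0(\Sigma)$ is connected, and the fibration $K(\Z,2)\to\wt{G}\to U(1)^d$ yields the short exact sequence $0\to\Z\to\pi_0\Map(\Sigma,\wt{G})\to\Hom(\Z^2,\Z^d)\to 0$ (the liftability obstruction in $[\Sigma,K(\Z,3)]=H^3(\Sigma;\Z)$ vanishes). To pin down the $2$-cocycle I would choose explicit lifts $\tilde m\colon\T^2\to\wt{G}$ of each $m\in\Hom(\Z^2,\Z^d)$, form the pointwise product $\tilde m\cdot\tilde m'$ in $\wt{G}$, and compare to a chosen lift of $m+m'$; the discrepancy, which lies in $[\Sigma,K(\Z,2)]\cong\Z$, is computed by a direct cup product calculation in $H^*(\T^2)$ using the bilinear extension $\omega$, yielding exactly $\omega(m_1,m_2')-\omega(m_2,m_1')$. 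The $GL_2(\Z)$-formula reflects its natural action on $H^1\Sigma$ and contragredient action on $H_1\Sigma$, accounting for the $(\det A)^{-1}$ factors.

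For part (2), I would use the Postnikov description: $B\mc{W}_0(\wt{G})$ is $1$-connected with $\pi_2 \cong \Z^{d+4}$ (basis $y_j, x_a, t_a$) and $\pi_3 \cong \Z$ (class $\iota_3$), so its rational cohomology is governed by the classifying $k$-invariant. The Serre spectral sequence for the Postnikov fibration $K(\Z,3)\to B\mc{W}_0(\wt{G})\to K(\Z^{d+4},2)$ has $E_2 = \C[t,y,x]\otimes\Lambda[\iota_3]$, with the unique potentially nonzero differential being $d_4(\iota_3)\in H^4(K(\Z^{d+4},2);\C)$ equal to the $k$-invariant. The assertion is then the identification
\[
d_4(\iota_3) = \phi(y) + t_1 x_1 + t_2 x_2,
\]
which splits into two contributions. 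The $\phi(y)$ piece is obtained by restriction along the inclusion $B\wt{G}\hookrightarrow B\mc{W}_0(\wt{G})$ (constants in $\wt{G}$, trivial diffeomorphism), using $H^*(B\wt{G};\C) = \C[y]/\phi(y)$ to recover the $\wt\phi$-class on the $y$-subspace. The mixed piece $t_1 x_1 + t_2 x_2$ is isolated by the sub-case $d = 0$ and arises, as outlined in the remark after the $\wt{G} = K(\Z,2)$ discussion, from the canonical non-degenerate pairing $H_1\Sigma \xra{\sim} H^1\Sigma$ (equivalently, from the Pontryagin product on $H_*\T^2$).

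Part (3) is then a direct calculation of how each generator transforms under conjugation by representatives of $\ol{\mc{W}}(\wt{G})$ in the model: the $GL_2(\Z)$-action on $(t_a)$ and $(x_a)$ is its natural and contragredient action on $H^1\Sigma$ and $H_1\Sigma$; the $m\in\Hom(\Z^2,\Z^d)$-action shifts $y$ by $mt$ (reflecting tensoring the universal $U(1)^d$-bundle by the $m$-line bundle on $\Sigma$), with the induced shift $x \mapsto x-\beta(y,m)-\omega(mt,m)$ forced by preservation of the defining relation $\phi(y)+t_1 x_1+t_2 x_2=0$; and the central $n$-action shifts $(x_1,x_2)\mapsto(x_1-nt_2,x_2+nt_1)$ via the same Poincar\'e pairing. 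The main obstacle I expect is part (2)'s rigorous identification of the mixed $t\cdot x$ term: this coupling does not arise from $\wt\phi$ but from the $\T^2_{\mathrm{out}}$-action on $\Map_0(\Sigma, K(\Z,2))$, which is pointwise homotopically trivial yet carries nontrivial family-level structure. The careful transgression/obstruction computation handling it is presumably what appears to motivate the coordinate-invariant general form \eqref{thm:general-thm} and the machinery of \S\S\ref{sec:spaces-in-2-3}--\ref{sec:proof-of-theorem}.
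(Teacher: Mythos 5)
Your overall strategy coincides with the paper's: identify $B\mc{W}_0(\wt{G})$ as a simply connected $3$-type determined by a quadratic invariant/$k$-invariant, read off the ring from the Serre spectral sequence of $K(\Z,3)\to B\mc{W}_0(\wt{G})\to K(\Z^{d+4},2)$ (this is exactly \eqref{prop:cohomology-23-space}), and obtain (3) from the action of $\pi_0\mc{W}(\wt{G})$ on homotopy groups. The gap is that the computational core is not actually carried out. For (2), the only pieces of the $k$-invariant you establish are its restriction to the $y$-subspace; for the mixed term $t_1x_1+t_2x_2$ you appeal to the remark following the $\wt{G}=K(\Z,2)$ discussion, but in the paper that remark is itself justified only by \eqref{thm:general-thm}, so there is nothing independent to cite -- the $d=0$ case is part of what must be proved. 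The content hidden here is precisely the computation of the Whitehead/Samelson products pairing $\pi_1$ of the rotation torus $\Diff_0(\Sigma)\subset\mc{W}_0(\wt{G})$ against $\pi_*\Map(\Sigma,\wt{G})$, which the paper gets from the commutator map $C_v$, the desuspension map $D$, and the slant-product formula (\eqref{lemma:cv-formula}, \eqref{lemma:samelson-t-with-map-g}, \eqref{prop:samelson-t-with-map-g}); you correctly flag this as the obstacle but then defer it, so the proof of (2) is incomplete. You also never rule out the remaining cross-terms ($t_iy_j$ and $y_jx_a$) in the quadratic invariant; these can in fact be killed by soft arguments (the constants and $\Diff_0(\Sigma)$ generate a product subgroup $\wt{G}\times\Diff_0(\Sigma)$, and $\Map(\Sigma,K(\Z,2))$ is central in $\Map(\Sigma,\wt{G})$), but some argument is needed and none is given.

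For (3) there is a genuine logical error: the claim that the shift $x\mapsto x-\beta(y,m)-\omega(mt,m)$ is ``forced by preservation of the defining relation'' is not valid. Invariance of $\phi(y)+xt$ only constrains the value of the shift on the element $t$ itself, not on all of $L=\Z^2$; indeed it cannot even distinguish the central action $x\mapsto x+n\nabla t$ from the trivial action, since $(n\nabla t)(t)=0$. The actual formulas (including the precise $\omega$ versus $\beta$ bookkeeping and the term $n\nabla t$) come from computing commutators in $\mc{W}(\wt{G})$ against the explicit group model $G_\omega$, i.e., the Samelson-product computations of \S\ref{sec:compute-map-sigma-g}--\S\ref{sec:proof-of-theorem}, translated to Whitehead products and the $\pi_1$-action on $\pi_2$. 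Your part (1) outline (lifting along $K(\Z,2)\to\wt{G}\to U(1)^d$ and computing the $2$-cocycle by a cup-product calculation on $\T^2$) is sound and matches the paper's computation of $\pi_0\Map(T,G_\omega)$, modulo the sign conventions the paper is careful about; but as it stands the proposal proves (1) in outline only and leaves (2) and (3) resting on the very computation it postpones.
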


\begin{rem}
The second line of \eqref{eq:action-for-F} is in 
compressed form.  In 
full it means
\begin{equation}\label{eq:action-for-F-full}
\begin{aligned}
  (m_1,m_2)\propto (t,y,x) &= (t_1,t_2,\; y+m_1t_1+m_2t_2, 
\\ &\qquad
  x_1-\beta(y,m_1)-\omega(m_1t_1+m_2t_2, m_1), 
\\
  & \qquad x_2-\beta(y,m_2)-\omega(m_1t_1+m_2t_2, m_2)),
\end{aligned}
\end{equation}
where $m=(m_1,m_2)\in \Hom(\Z^2,\Z^d)\approx (\Z^d)^2$. 
\end{rem}

\begin{rem}
Up to isomorphism, the central extension \eqref{eq:cent-ext-F}
depends only on the 
antisymmetrization of the 2-cocycle
$\gamma(m,m')=\omega(m_1,m_2')-\omega(m_2,m_1')$, which is
$\gamma_{\mathrm{antisym}}(m,m')= \gamma(m,m')-\gamma(m',m)=
\beta(m_1,m_2')-\beta(m_2,m_1')$,
and thus depends only on $\phi$, not on $\omega$. 
\end{rem}

The corresponding geometric object $\mc{X}_{\wt{G}}\subseteq
\mc{X}\times\C^d\times \C^2$ is the locus of
$t_1x_1+x_2t_2=-\phi(y)$, subject to $\R t_1+\R t_2=\C$.  The free quotient
$\Z\backslash \mc{X}_{\wt{G}}$ is a 
principal $\C^\times$-bundle over $\mc{X}\times \C^d$.  Thus,
$\mc{M}_{\wt{G}}$ is the total space of a principal $\C^\times$-bundle 
over $\mc{E}^d$.  

In fact, let us consider the quotient of $\mc{X}_{\wt{G}}$ under the
free action 
by $\Z\times
\C^\times$.  Explicitly,
\[
(\Z\times \C^\times) \backslash \mc{X}_{\wt{G}}\xra{\sim}
(\C\smallsetminus \R)\times 
\C^d\times \C^\times
\]
is given by 
\[
(t_1,t_2,\;y_1,\dots,y_d,\;x_1,x_2) \mapsto (\tfrac{t_1}{t_2},\;
\tfrac{y_1}{t_2},\dots, 
\tfrac{y_d}{t_2},\;  e^{2\pi i(x_1/t_2)}) = (\tau,\; z_1,\dots, z_d,\;
u).  
\]
The $\ol{\mc{W}}(G)$-action descends to an action by
$\Hom(\Z^2,\Z^d)\rtimes 
GL_2(\Z)$ on the quotient, given by 
\begin{align}
  \begin{aligned}
    m\propto (\tau, z, u) &= (\tau,\; z+m_1\tau+m_2,\; u e^{2\pi
      i[ -\beta(z,m_1)-\phi(m_1)\tau]}),
\\
  A \propto (\tau,z,u) &= (A\tau,\; (c\tau+d)^{-1}z,\;
  u^{1/\det A} e^{2\pi i(1/\det A)[
    c(c\tau+d)^{-1}\phi(z)]}), 
  \end{aligned}
&&
   \begin{aligned}
     m\in \Hom(\Z^2,\Z^d), \\ A\in GL_2(\Z),
   \end{aligned}
\end{align}
where $A\tau=(a\tau+b)/(c\tau +d)$ and $z=(z_1,\dots,z_d)$.  This describes  the principal
$\C^\times$-bundle over $\mc{E}^{\times d}$ whose associated line
bundle has as sections 
$\theta(\tau,z)$ such that (for $\Im(\tau)>0$ and $A\in
SL_2(\Z)$)
\begin{align*}
  \theta(\tau,\; z+m_1\tau+m_2)& = \theta(\tau,z)\,e^{2\pi
    i[-\beta(z,m_1)-\phi(m_1)\tau]},
\\
  \theta(A\tau,\; (c\tau+d)^{-1}z)&=\theta(\tau,z)\, e^{2\pi
    i[c(c\tau+d)^{-1}\phi(z)]}. 
\end{align*}
In other words, we obtain the \emph{Looijenga line bundle} associated to the
quadratic form $\phi$ \cite{looijenga-root-systems}.  

\begin{rem}
Suppose $\phi\colon B=\Z^d\ra \Z$ is a non-degenerate quadratic
function.  Then with our
conventions, the  line bundle $L_\phi$ associated to $\phi$, admits a
non-trivial \emph{holomorphic} section over $C_\tau^d$ (for any chosen
$C_\tau:=\C/(\Z\tau+\Z)$ with $\Im(\tau)>0$) if and only if 
$\phi$ is \emph{positive definite}.  The main example of interest is 
the positive definite quadratic function $\phi$  associated to the
Killing form on the coroot lattice of a simply connected compact Lie
group; in this case $\phi$ is 
invariant under the action of the Weyl group, so the bundle
$L_\phi$ is equivariant for the Weyl group.

To see the existence of sections in this case, we use
\cite{mumford-abvar}*{I.2 and I.3}.  In the 
notation of \cite{mumford-abvar}*{I.2, p.\ 15--16; I.3, p.\ 24--25},
the line bundle 
$L_\phi|C_\tau^d$ is 
described by a 1-cocycle $e$ on $U=\Z^d\tau+\Z^d\subseteq \C^d$ with
coefficients in holomorphic functions $\C^d\ra \C^\times$, given by
\[
e_u(z) = e^{2\pi i f_u(z)},\qquad
f_{m_1\tau+m_2}(z)=-\beta(z,m_1)-\tfrac{1}{2}\beta(m_1,m_1)\tau,
\qquad m_1,m_2\in \Z^d.
\]
By \cite{mumford-abvar}*{I.2, p.\ 18, Proposition}, 
\[
E(u,u') := f_{u'}(z+u) + f_u(z) -f_u(z+u') -f_{u'}(z), \quad \text{any
  $z\in \C^d$,}
\]
defines an alternating 2-form $E\colon U\times U\ra \Z$ which
represents the Chern class of $L_\phi|C_\tau^d$.  We calculate that in our
case, 
\[
E(m_1\tau+m_2, m_1'\tau+m_2') = \beta(m_1,m_2')-\beta(m_2,m_1').
\]
Extend $E$ to an $\R$-linear form $\C^d\times \C^d\ra \R$ and set
$H(x,y):= E(ix,y)+ iE(x,y)$.  Then $H$ is a Hermitian form with $\Im
H=E$.  By (\cite{mumford-abvar}*{I.3, p.\ 26}, proposition and preceding
discussion), if $H$ is non-degenerate, then $L_\phi|C_\tau^d$ admits non-zero
holomorphic sections if and only if $H$ is positive definite, in which
case $\dim H^0(C_\tau^d, L_\phi|C_\tau^d) = \sqrt{\det E}$ (express $E$ as a
matrix using a $\Z$-basis of $U$).  
We calculate that in our case, 
\[
H(x,x) = (\Im\tau)^{-1}\beta(x,\ol{x}), \qquad x\in \C^d. 
\]
As $\Im \tau>0$ and $\beta(x,y)=\sum c_{ij}x_iy_j$ is a symmetric form
on $\C^d$ with $c_{ij}\in
\Z\subseteq \R$,
we see that $H$ is non-degenerate/positive definite on $\C^d$ if and only if
$\beta$ is non-degenerate/positive definite on $\R^d$, and if so we 
have 
$\sqrt{\det E} = \det (c_{ij})$.  
\end{rem}

\begin{rem}
For $\phi\colon B\approx \Z^d\ra \Z$ positive definite,
sections $\theta_u$ of $L_\phi|C_\tau^d$ 
are given by
$\theta_u(\tau,z) = \sum_{v\in B} e^{2\pi i [
  -\beta(z,u+v)+\phi(u+v)\tau ]}$ for 
$u\in B\otimes \R$ such that $\beta(u,B)\subseteq \Z$,
\cite{looijenga-root-systems}*{\S4}.   
\end{rem}

\subsection{Proof of the theorem}

We will derive \eqref{thm:main-thm} from a more general (and
coordinate invariant) statement \eqref{thm:general-thm}, whose setup
and proof takes up
\S\S\ref{sec:spaces-in-2-3}--\ref{sec:conventions}.  It  
is entirely calculational, and amounts to completely describing the
homotopy type of the spaces  $B\mc{W}(\wt{G})$.  In particular, the
key is to compute all Whitehead products in the homotopy groups of
this space. 

 We note that one can instead regard $\wt{G}$ as arising from  a
\emph{Lie $2$-group}, specifically as a 2-group 
extension as considered in \cite{ganter-categorical-tori}.  It seems
likely that  2-group methods should lead to a more 
informative proof of the results shown here.

\section{Isogenies}
\label{sec:isogenies}

We describe how, according to the picture of the previous sections,
finite 
coverings of genus 1 surfaces 
correspond to isogenies of elliptic curves.

Fix a finite covering  map
$f\colon 
\Sigma'\ra \Sigma$ between two surfaces.   Let $\Diff(f)\subset
\Diff(\Sigma)\times \Diff(\Sigma')$ denote the group of pairs of
diffeomorphisms compatible with $f$.  We note that the projection map
$\Diff(f)\xra{t} \Diff(\Sigma)$ is a finite covering map, while  the projection map
$\Diff(f)\xra{s} \Diff(\Sigma')$ is injective and induces a homotopy equivalence
between $\Diff(f)$ and a union of path components of $\Diff(\Sigma')$,
corresponding to a finite index subgroup of $\pi_0\Diff(\Sigma')$.  

Given any group $G$, we can form a diagram as follows
\begin{equation}\label{eq:isogeny-diagram}
\xymatrix{
{B\mc{W}^{\Sigma}(G)} \ar[d]
& {(Bt)^*B\mc{W}^{\Sigma}(G)} \ar[l] \ar[dr] \ar@/^2ex/[rr]^{f^*}
&& {(Bs)^*B\mc{W}^{\Sigma'}(G)} \ar[r] \ar[dl]
& {B\mc{W}^{\Sigma'}(G)} \ar[d]
\\
{B\Diff(\Sigma)} 
&& {B\Diff(f)} \ar[ll]^{Bt} \ar[rr]_{Bs}
&& {B\Diff(\Sigma')}
}\end{equation}
where the trapezoids are homotopy pullbacks.  That is,
$(Bt)^*B\mc{W}^\Sigma(G)
\approx B(\Map(\Sigma,G)\rtimes\Diff(f))$ and
$(Bs)^*B\mc{W}^{\Sigma'}(G)\approx B(\Map(\Sigma',G)\rtimes\Diff_+(f))$,
while  the map labeled $f^*$ 
is obtained from the map $\Map(\Sigma, G)\ra \Map(\Sigma',G)$ given
by restriction along $f$.

The observation is that, after applying the construction of
\S\ref{subsec:geometric-picture},  the map $f^*$ presents an isogeny
of curves, 
of degree equal to the degree of $f$.  To see this, we consider an
explicit example.

\begin{exam}
Fix $\Sigma=\Sigma'=\T^2$, and let $f\colon \Sigma'\ra \Sigma$ be the
map induced by  left 
multiplication by some integer matrix $B$.
Set $\Gamma_B:= GL_2(\Z)\cap B^{-1}\,GL_2(\Z)\, B$.   Note using a
suitable choice of bases of $H_1\Sigma$ and $H_1\Sigma'$, the matrix
$B$ can be given  the form  $B=\left(\begin{smallmatrix} M&0\\
0&MN\end{smallmatrix}\right)$ for some $M,N\geq 1$, in which case $\Gamma_B
= \Gamma_0(N) = \set{\left(\begin{smallmatrix}
      a&b\\c&d\end{smallmatrix}\right)\in GL_2(\Z)}{c\equiv 0\mod
  N}$.

Then there is a weak equivalence of topological groups
\[
\Gamma_B\ltimes \T^2 \xra{\sim} \Diff(f),
\]
so that the projections  $\Diff(\Sigma) \la \Diff(f) \ra \Diff(\Sigma')$
correspond to
\[
GL_2(\Z)\ltimes \T^2 \xla{(BAB^{-1},Bt)\mapsfrom (A,t)} \Gamma_B\ltimes \T^2
\xra{(A,t)\mapsto(A,t)} GL_2(\Z)\ltimes \T^2.
\]
Let $G=U(1)$, form $\Spec_\an$ of the cohomology of universal covers
of objects in \eqref{eq:isogeny-diagram}, and restrict to the subset
$\mc{X}=\set{(t_1,t_2)}{\R t_1+\R t_2=\C}$.  Together with actions of
fundamental groups and the grading action by $\C^\times$, the 
middle
triangle of \eqref{eq:isogeny-diagram} is seen to have the form
\[\xymatrix{
{(\Gamma_B\ltimes' \Z^2)\times \C^\times \bbss
  \mc{X}\times \C}  
\ar[dr] \ar[rr]^{f^*} 
&& {(\Gamma_B\ltimes \Z^2)\times \C^\times \bbss
  \mc{X}\times \C} \ar[dl]
\\
& {\Gamma_B\times \C^\times\bbss \mc{X}} 
}\]
where $f^*$ is induced by the identity on $\mc{X}\times \C$, and both
maps $\mc{X}\times \C\ra \mc{X}$ are the evident projection.  The
semidirect product $\Gamma_B\ltimes \Z^2$ is induced by the
tautological action $\Gamma_B\subset GL_2(\Z)$, while the semidirect
product $\Gamma_B\ltimes'\Z^2$ is induced by the homomorphism
$A\mapsto BAB^{-1}\colon \Gamma_B\ra GL_2(\Z)$.  The action in the
upper-right corner is
\[
A\propto (t,y)=(At,y), \quad m\propto (t,y)=(t,y+mt), \quad
\lambda\propto(t,y)=(\lambda t,\lambda y),
\]
while the action in the upper-left corner is
\[
A\propto (t,y)=(At,y), \quad m\propto(t,y)=(t,y+mBt), \quad
\lambda\propto (t,y)=(\lambda t, \lambda y)
\]
where $A\in \Gamma_B$, $m\in \Z^2$ (treated as a row vector), $t\in
\mc{X}$ (treated as a column vector),  $y\in \C$, and $\lambda\in
\C^\times$.

Thus, in the ``fibers'' over $(t_1,t_2)\in \mc{X}$ we obtain (after
taking quotients by $\Z^2$-actions) the projection
$\C/\bigl((Bt)_1\Z+(Bt)_2\Z\bigr) \ra \C/\bigl(t_1\Z+t_2\Z\bigr)$, an
isogeny of degree $\det B$.   E.g., for $B=\left(\begin{smallmatrix} M&0\\
0&MN\end{smallmatrix}\right)$ we get $\C/\bigl( Mt_1\Z +
MNt_2\Z\bigr)\ra \C/(t_1\Z+t_2\Z)$.  
\end{exam}

\section{Remarks on the formalism}
\label{sec:remarks-on-formalism}

\subsection{Remarks on the construction of an equivariant cohomology
  theory}

We can easily produce for each group $G$ that we consider  an
equivariant cohomology theory of the form
\[
E^*_G\colon \bigl(\text{$G$-CW-complexes}\bigr)^\op \ra
\bigl(\text{$\ol{\mc{W}}(G)$-equivariant
  $H^*(B\mc{W}_0(G);\C)$-algebras}\bigr). 
\]
Given a $G$-space $X$ let 
\[
\Map^\gh_G(\Sigma\times G, X) \subseteq \Map_G(\Sigma\times G,X)
\]
be the subspace consisting of \dfn{ghost maps}, i.e., $G$-equivariant
maps $f\colon \Sigma\times G \ra X$ such that $f(\Sigma\times G)$ is
contained in a single $G$-orbit.  The ghost maps are invariant under the evident action of
$\mc{W}(G)$ on $\Map_G(G\times \Sigma, X)$, so we can define
\[
E^*_G(X) := \bigl( \ol{\mc{W}}(G)^\op\;\curvearrowright \;
H^*(\Map^\gh_G(\Sigma\times G,X)_{h\mc{W}_0(G)}; \C)\bigr).   
\]
That this is a cohomology theory amounts to the observations that  (i)
$X\mapsto \Map^\gh_G(\Sigma\times G, X)$  preserves pushouts along
cofibrations and (ii) $\Map_G^\gh(\Sigma\times G, T\times X)\approx
T\times 
\Map_G^\gh(\Sigma\times G, X)$ when $T$ has 
trivial $G$-action.  
\begin{exam}
Let $G=U(1)$ and $X=U(1)/\mu_N$.  Then 
\[
E^*_{U(1)}(U(1)/\mu_N) \approx \prod_{(n_1,n_2)\in
  \Z^2}\C[t_1,t_2,y]/(y-(n_1/N)t_1-(n_2/N)t_2),
\]
which is an algebra over $H^*(B\mc{W}_0(U(1));\C)\approx
\C[t_1,t_2,y]$ in the 
obvious way and which carries an evident compatible action by $\ol{\mc{W}}(U(1))=
GL_2(\Z)\ltimes \Z^2$.  
\end{exam}
Ideally one would like to ``analytify'' the equivariant module $E^*_G(X)$,
to obtain a sheaf of $\mc{O}_{\mc{M}_G}$-algebras on $\mc{M}_G$, to be coherent at least if $X$
is a finite $G$-CW-complex;  we would  then hope to take it as a model for
Grojnowski's equivariant elliptic cohomology.  Unfortunately, the most obvious way to do
this (e.g., by tensoring up from algebraic to holomorphic functions),
though exact, behaves poorly  on most $E^*_G(X)$
(which are often non-Noetherian, even when $X$ is a $G$-orbit).

\subsection{Remarks on derived constructions}

In this paper we have been content to produce examples of ``classical''
geometric objects, e.g., 
complex analytic spaces.  However, we know that elliptic cohomology
wants to take values in sheaves on a \emph{derived} geometric object,
along the lines of \cite{lurie-elliptic-survey}. I don't know how to
make such a derived construction; however, I'll give some speculation here.

Fix a commutative dga\footnote{We use homological grading here, so
  $x\in C^q$ has $\len{x}=-q$.}
$\C[u^\pm]$, where $\len{u}=2$ with $du=0$.  This admits an
evident grading coaction by the Hopf algebra $\C[\lambda^\pm]$ with
$\len{\lambda}=0$ and $d(\lambda)=0$, by $u\mapsto \lambda\otimes u$.  Thus
$\mathbb{G}_m:=\Spec^\der \C[\lambda^\pm]$ acts on $\Spec^\der
\C[u^\pm]$.

For a space $X$, let $C^*X$ denote a functorial commutative dga model
for the cochains on $X$ with $\C$ coefficients; e.g., we could take
$C^*X$ to be the PL-de Rham forms on $X$.  
Thus $\Spec^\der C^*X\otimes_\C \C[u^\pm]$ inherits an action by
$\mathbb{G}_m$.  We can 
then plug in $\ol{W}(G)\curvearrowright B\mc{W}_0(G)$ as above to
obtain 
\[
\ol{W}(G)\times \mathbb{G}_m \curvearrowright \Spec^\der
C^*B\mc{W}_0(G)\otimes_\C \C[u^\pm],
\]
a derived scheme equipped with an action by a group scheme.

At this point we  posit the existence of a \emph{derived
  analytification} functor $\Spec^\der_\an$, which takes as input a
commutative dga $A^*$ over $\C$, as gives as output a derived complex
analytic space $\Spec^\der_\an A^*=(X,\mc{O})$, in some suitable
$\infty$-category $\An^\der$ of derived analytic spaces.  It should
have the property that 
(at least in 
the examples we care about), the underlying complex analytic space
$(X,H^0(\mc{O}))$ is equivalent to $[\Spec H^0A^*]_\an$.
Given this, we would could then proceed to construct derived versions
of $\mc{X}_G$ and $\mc{M}_G$ as desired.

An $\infty$-category $\An^\der$ has been constructed in work of
Lurie \cite{lurie-dag9} and Porto
\cite{porta-dcag-1}, and in fact comes equipped with an
analytification functor.  A significant issue in carrying out this
program (as pointed out to me by Mauro Porto) is that the ``rings''
which appear in this model are fundamentally $(-1)$-connected objects,
whereas the rings we want to consider are naturally non-connected, and
in fact are generally 2-periodic.

\subsection{Remarks on functoriality}

As we have described it, our construction $G\mapsto \mc{M}_G$ is
functorial with respect to homomorphisms of groups.  For a derived
version of this construction it is highly desirable to have an
enhanced ``stacky''  version of this functoriality, where homomorphisms are
enriched to maps between classifying spaces (not necessarily basepoint
preserving), i.e., we should have
\[
\Map_{\mr{Top}}(BG, BG') \ra \Map_{\An^\der}(\mc{M}^\der_G,
\mc{M}^\der_{G'}).  
\]
This extneded functoriality should apply not just to tori but to
$K(\Z,2)$-extensions of them, and thus should be consistent with
Lurie's notion of \emph{2-equivariance}
\cite{lurie-elliptic-survey}*{\S5}.  
I'll briefly indicate how to achieve this; it may be 
enlightening even in the non-derived case. 

Consider $\Map(\Sigma, BG)$, the space parameterizing principal
$G$-bundles over $\Sigma$.  There is a distinguished path
component $\Map_0(\Sigma,BG)\subseteq \Map(\Sigma,BG)$ corresponding
to trivializable bundles, which is  equivalent to $B\Map(\Sigma, G)$.
There is a corresponding
path component
\[
\mc{P}(BG) \subseteq \Map(\Sigma,BG)_{h\Diff(\Sigma)}
\]
equivalent to $B\mc{W}(G)$.  Note that a map $BG\ra BG'$
sends $\mc{P}(BG)\ra \mc{P}(BG')$.  

Thus, ``enhanced functoriality'' follows once we describe how to
functorially obtain a 
derived stack from a suitable connected space $X$ (such as
$X=\mc{P}(BG)$).   

For each path connected space $X$, make an arbitrary choice of
universal cover $p\colon \wt{X}\ra X$, and write $G$ for its group of deck
transformations.  Note that $p$ is a principal $G$-bundle.  We get a
topological quotient stack $G\bbss \wt{X}$ which is 
equivalent to $X$.  After taking cohomology we obtain a total quotient
stack $G\bbss \Spec H^*\wt{X}$;
replacing cohomology with cochains gives the corresponding derived
object.  When $X=\mc{P}(BG)$ this recovers the construction
$\ol{W}(G)\bbss H^*B\mc{W}_0(G)$.  

Consider a  map $f\colon X\ra Y$ to another path connected space, we
and write $(\wt{Y},q,H)$ for the 
analogous  choices for $Y$.  Then $f$ induces a map $G\bbss \Spec
H^*\wt{X}\ra H\bbss 
\Spec H^*\wt{Y}$ of stacks which is represented by a \emph{bibundle},
as follows.  Consider
\[
\wt{X} \xla{\pi} \Lift(f)\times \wt{X} \xra{\epsilon} \wt{Y}
\]
where $\Lift(f)=\set{\wt{f}\colon \wt{X}\ra \wt{Y}}{q\wt{f}=fp}$ is
the set of lifts of $f$ to the universal covers, $\pi$ is the
projection map, and $\epsilon$ is the  evaluation map.  We have
\begin{itemize}
\item $G$ acts on $\Lift(f)\times \wt{X}$ from the left by $g\cdot
  (\wt{f},\wt{x}) = (\wt{f}g^{-1}, g\wt{x})$, 
\item $H$ acts on $\Lift(f)\times \wt{X}$ from the right by
  $(\wt{f},\wt{x})\cdot h = (h^{-1}\wt{f},\wt{x})$,
\item the group actions on $\Lift(f)\times \wt{X}$ commute,
\item $\pi$ is equivariant with respect to $G$ and $H$ (where $H$ acts
  trivially on $\wt{X}$),
\item $\epsilon$ is equivariant with respect to $G$ and $H$ (where $G$
  acts trivially on $\wt{Y}$), and
\item $\pi$ describes a \emph{$G$-equivariant principal $H$-bundle}
  over $\wt{X}$.
\end{itemize}
That is, the diagram describes a \emph{bibundle} from the topological
groupoid $G\bbss \wt{X}$ to $H\bbss \wt{Y}$; i.e, it is a ``stacky''
presentation of $f$ in terms of the chosen covers.

Taking cohomology (or in the derived context, cochains) gives 
\[
\Spec H^*\wt{X} \xla{\pi} \Lift(f)\times \Spec H^*\wt{X}
\xra{\epsilon} \Spec H^*\wt{Y},
\]
exhibiting a bibundle between groupoid schemes,  i.e., representing a
map $G\bbss \Spec H^*\wt{X}\ra H\bbss \Spec H^*\wt{Y}$ of stacks.

\begin{exam} 
Applying this to our set-up in the case of a map $f\colon *\approx Be\ra
BU(1)$ gives 
\[
\mc{X} \xla{\pi} (GL_2(\Z)\ltimes\Z^2)\times \mc{X} \xra{\epsilon}
\mc{X}\times \C 
\]
with $\epsilon((B,n),t)=(Bt,nBt)$.  The
groups $G=GL_2(\Z)$ and $H=GL_2(\Z)\rtimes \Z^2$ act on $\Z^2\times
\mc{X}$ by $A\cdot ((B,n),t) = ((BA^{-1}, n), At)$ and $((B,n),t)\cdot
(A',m)= ((A'B, (n+m)A'^{-1}), t)$.  
\end{exam}

\subsection{Remarks on the 1-dimensional case}

We can carry out the analogue of our constructions in the case that
$\Sigma$ is a circle rather than a torus.  The relevant calculations
can be read off from \eqref{thm:general-thm}.  The main differences
are that 
in this case we take
\[
\mc{X}=\mc{X}_e= \set{t\in \C}{t\neq0},
\]
the set of vectors which generate a rank 1 lattice in $\C$.  Then we
easily discover that $\mc{M}_{U(1)}$ is the ``universal multiplicative
group'' living over $\mc{M}_e\approx (\Aut(\mathbb{G}_m)\bbss *)\approx
(\{\pm 1\}\bbss *)$.  The central extension groups turn out to be
invisible from this point of view, since  
$\mc{M}_{K(\Z,2)} \approx \mc{M}_{e}$.  

\section{Some spaces and groups}
\label{sec:spaces-in-2-3}

The spaces $X=B\mc{W}_0(\wt{G})$ that we need to deal with are simply connected
3-types such that $\pi_2$ and $\pi_3$ are finitely generated and
free.  We first discuss some general facts and conventions about such
spaces, concluding with the calculation of $H^*(X;\Q)$ in terms
of the Whitehead product in $\pi_*X$ in good cases; all of this material is surely
standard.  We next describe an explicit topological group model for the
central extensions $\wt{G}=U(1)^d\times_\phi K(2,\Z)$ that  we need to consider.

\subsection{Simply connected 3-types with all homotopy groups finitely
  generated and free}
\label{subsec:sc-3-types}

Let $\mc{C}$ denote the full subcategory of spaces $X$ which are
(i) simply connected, (ii) have $\pi_kX\approx 0$ for $k\geq 4$, and (iii) have
$\pi_2X$ and $\pi_3X$ which are finitely generated free abelian
groups.  Write $h\mc{C}$ for the associated homotopy category.

For $X\in \mc{C}$ with $\pi_2X=B$ and $\pi_3X=C$, the 
Whitehead product \eqref{subsec:whitehead-products} $[-,-]\colon
\pi_2X\times 
\pi_2X\ra \pi_3X$ 
defines a bilinear symmetric form 
\[
\beta\colon B\otimes B\ra C.
\]
Precomposition $\circ \eta\colon \pi_2X\ra \pi_3X$ with the Hopf map
$\eta\in \pi_3S^2$ is a function
\[
\phi\colon B\ra C,
\]
quadratic in the sense of \S\ref{subsec:case-of-f}, 
which satisfies 
\[
\phi(y+y') = \phi(y)+\beta(y,y')+\phi(y').
\]
(This identity fixes our preferred choice of generator $\eta$ of
$\pi_3S^2$.)  We call $\phi$ the \dfn{quadratic invariant} of $X$, and
$\beta$ the associated \dfn{Hessian form}.

It is classical that the data of $(B,C,\phi)$ is a complete invariant
for the homotopy type of $X\in \mc{C}$.  In fact, $X\mapsto
\phi$ defines an equivalence between the homotopy category
$h\mc{C}$ of such spaces, and the category of quadratic
functions between finitely generated free groups.  

Let $i\colon K(C,3)\ra X$ and $j\colon X\ra K(B,2)$ be maps, unique up
to homotopy, which induce identity on the relevant homotopy groups.
We can furthermore extend to a fibration sequence
\[
X \xra{j} K(B,2) \xra{\psi} K(C,4),
\]
i.e., so that $j$ is identified with the tautological map from the
homotopy fiber of $\psi$.  

\begin{prop}
\label{prop:whitehead-product-vs-k-invariant}
Let $X\in \mc{C}$ with quadratic invariant $\phi$ and Hessian form
$\beta$, and  consider $b,b'\in B=\pi_2X$.  There exists a homotopy
commutative diagram
\[\xymatrix{
{S^3} \ar[r]^-{w} \ar[d]_{f}
& {S^2\vee S^2} \ar@{>->}[r] \ar[d]_{(b,b')}
& {S^2\times S^2} \ar@{->>}[r] \ar[d]_{g}
& {S^4} \ar[d]_{\wt{f}}
\\
{K(C,3)}  \ar[r]_-{i} 
& {X} \ar[r]_-{j}
& {K(B,2)} \ar[r]_-{\psi}
& {K(C,4)}
}\]
where $w$ is the universal Whitehead product, and $\wt{f}\colon
S^4\approx S^3\sm
S^1\ra K(C,4)$ is adjoint to $f\colon S^3\ra \Omega K(C,4)\approx
K(C,3)$.  Furthermore,
\begin{enumerate}
\item $g_*\colon H_4(S^2\times S^2)\ra H_4K(B,2)\approx \Gamma_2 B
  \subseteq (B\otimes B)^{\Sigma_2}$ sends $[S^2]\times [S^2]\mapsto
  b\otimes b'+b'\otimes b$, 
\item $\wt{f}_*\colon H_4S^4\ra H_4K(C,4)\approx C$ sends
  $[S^4]\mapsto \beta(b,b')$,  and
\item $\psi_*\colon H_4K(B,2)\approx \Gamma_2B \ra H_4K(C,4)\approx C$
coincides with $\wt{\phi}\colon \Gamma_2B\ra C$, the homomorphism
associated to $\phi$ as defined in \eqref{subsec:quadratic-functions}.
\end{enumerate}
\end{prop}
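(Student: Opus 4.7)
The plan is to build the diagram from left to right and then verify the numerical claims. Given $b, b' \in \pi_2 X = B$, pick representatives and combine them into a map $(b,b') \colon S^2 \vee S^2 \to X$. By the definition of the Whitehead product, $(b,b') \circ w \colon S^3 \to X$ represents $[b,b'] \in \pi_3 X = C$, which equals $\beta(b,b')$ by our definition of $\beta$. Since $i \colon K(C,3) \to X$ induces an isomorphism on $\pi_3$, I can choose $f \colon S^3 \to K(C,3)$ representing $\beta(b,b')$ with $i \circ f \simeq (b,b') \circ w$; this fills in the left square.

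For the middle and right squares, use the fact that $K(B,2) \simeq \Omega K(B,3)$ is a loop space, so all Whitehead products in $K(B,2)$ vanish. In particular $j \circ (b,b') \circ w \simeq \ast$, so $j \circ (b,b')$ extends along the cofibration $S^2 \vee S^2 \hookrightarrow S^2 \times S^2$ to some $g \colon S^2 \times S^2 \to K(B,2)$; explicitly, one may take $g := \mu \circ (jb \times jb')$ using the abelian group multiplication $\mu$ on $K(B,2)$. Since $\psi \circ j \simeq \ast$, the composite $\psi \circ g$ is null on $S^2 \vee S^2$ and so factors through the quotient $q \colon S^2 \times S^2 \to S^4$ as $\wt{f} \circ q$ for some $\wt{f} \colon S^4 \to K(C,4)$. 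The top row of the diagram is a cofiber sequence and the bottom row a fibration sequence; a compatible map between them is equivalent, via the suspension-loop adjunction $[S^4, K(C,4)] \cong [S^3, K(C,3)]$, to the specification of an adjoint pair. Unwinding the construction, this adjunction identifies $\wt{f}$ with $f$, so in particular $\wt{f}$ represents $\beta(b,b') \in \pi_4 K(C,4) = C$, proving claim (2).

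For claim (1), since $g = \mu \circ (jb \times jb')$, we have $g_*([S^2] \times [S^2]) = b \cdot b'$, the Pontryagin product in $H_* K(B,2) \cong \Gamma^*(B)$ (the divided power algebra on $B$ placed in degree $2$); in $H_4 = \Gamma_2 B$ this product corresponds, under the inclusion $\Gamma_2 B \hookrightarrow B \otimes B$, to $b \otimes b' + b' \otimes b$. For claim (3), the homotopy commutativity of the rightmost square combined with (1) and (2) forces $\psi_*(b \otimes b' + b' \otimes b) = \beta(b,b') = \wt{\phi}(b \otimes b' + b' \otimes b)$ for all $b, b' \in B$. Specializing to $b = b'$ gives $2(\psi_*(\gamma_2(b)) - \phi(b)) = 0$ in $C$; since $C$ is torsion-free, $\psi_*(\gamma_2(b)) = \phi(b) = \wt{\phi}(\gamma_2(b))$ on generators of $\Gamma_2 B$, and hence $\psi_* = \wt{\phi}$.

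I expect the principal obstacle to be the adjointness identification of $\wt{f}$ with $f$: the construction involves two independent choices (the extension $g$ and the factorization $\wt{f}$), and coordinating them with the pre-chosen $f$ requires careful tracking of null-homotopies, or equivalently of the duality between the top cofiber sequence and the bottom fibration sequence. Moreover, the sign conventions---orientation of $S^2 \wedge S^2 = S^4$, choice of generator $\eta \in \pi_3 S^2$, and the definition of the Whitehead product---must be set up so as to be consistent with the conventions defining $\phi$, $\beta$, and $\wt{\phi}$, as is foreshadowed by the promised discussion in \S\ref{sec:conventions}.
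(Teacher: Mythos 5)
Your proposal is correct and follows essentially the same route as the paper: $g$ is the H-space composite $\mu\circ(jb\times jb')$, statement (1) is the Pontryagin-product computation in $H_4K(B,2)\approx\Gamma_2B$, statement (2) rests on identifying $\wt{f}$ with the adjoint of $f$ (you build $\wt f$ as the factorization of $\psi\circ g$ and then match it with $f$, the paper defines $\wt f$ as the adjoint and asserts the square commutes---the same content), and statement (3) follows from commutativity of the right-hand square, with your $b=b'$/torsion-free step just making explicit the passage from the Hessian values to $\wt\phi$ on the generators $\gamma_2(b)$. The fiber--cofiber compatibility you flag as the principal obstacle is precisely the point the paper also leaves implicit (``the construction of the diagram is straightforward''), so your write-up sits at the same level of detail as the published proof.
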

\begin{proof}
We are using the tautological identification $H_4K(B,2)\approx
(B\otimes B)^{\Sigma_2}$ dual to $H^4K(B,2)\approx (B\otimes
B)_{\Sigma_2}$ defined by the cup product.    With respect to this 
identification, the H-space structure on $K(B,2)$ induces a Pontryagin
product $H_2K(B,2)\otimes H_2K(B,2)\ra H_4K(B,2)$ given by
$b\otimes b'\mapsto b\otimes b'+b'\otimes b\colon B\otimes B\ra \Gamma_2(B)$.

The construction of the diagram is straightforward.  In particular,
we can use the H-space structure on $K(B,2)$ to define $g$ as the
composite $S^2\times S^2 \xra{b\times b'} K(B,2)\times K(B,2)\ra
K(B,2)$, from which statement (1) follows immediately.
Statement (2) is immediate from the fact that $f$ and $\wt{f}$ are
adjoint, and that $if=[b,b']\colon S^3\ra X$.  Statement (3) then
follows from the commutativity of the diagram.
\end{proof}

Thus,  any $X\in \mc{C}$ is the homotopy fiber of the characteristic
class in $H^4(K(B,2), C)$ corresponding to  its quadratic invariant. 

\subsection{Rational cohomology ring of $X\in \mc{C}$}

Say that a quadratic function $\phi\colon B\ra C$ is
\dfn{regular} if the function 
\[
\wt{\phi}^* \colon C^*\otimes \Q \ra \Hom(\Gamma_2B,\Q) \approx
\Sym^2(B^*\otimes \Q) 
\]
dual to $\wt{\phi}\colon \Gamma_2B\ra C$ 
sends some basis of $C^*\otimes \Q$ to a regular sequence in the ring
$\Sym(B\otimes \Q)^*$.  

\begin{rem}
If $B=\Z^d$, $C=\Z^e$, and $\phi(y)=(\phi_1(y),\dots, \phi_e(y))$ with
$\phi_k(y)=\tfrac{1}{2}\sum_{i,j}c_{ij}^k y_iy_j$, then $\phi$ is
regular if and only if the sequence of polynomials
$\phi_1(y),\dots,\phi_e(y)$ form a regular sequence in $\Q[y_1,\dots,y_d]$.

In particular,  if $e=1$, then $\phi$ is regular if and only if $\phi\neq0$.  
\end{rem}

\begin{prop}\label{prop:cohomology-23-space}
Let $X\in \mc{C}$ with quadratic invariant $\phi$.  Then the map
$j^*\colon H^*(K(B,2);\Q)\ra H^*(X;\Q)$ factors through
\[
\Sym(B^*\otimes\Q)/(\wt{\phi}^*(C^* \otimes \Q)) \ra H^*(X;\Q)
\]
where $\wt{\phi}^*\colon C^*\ra (\Gamma_2B)^*$ is the $\Z$-dual to
$\wt{\phi}$.  Furthermore, the above map is an isomorphism of rings
when $\phi$ is regular.
\end{prop}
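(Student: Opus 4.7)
The proof would use the rational Serre spectral sequence of the fibration $K(C,3)\to X\to K(B,2)$, together with the identification of its transgression via \eqref{prop:whitehead-product-vs-k-invariant}.

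First, the factorization is immediate from the fibration sequence. Since $j$ is the homotopy fiber of $\psi$, the composite $\psi\circ j$ is null, so $j^*\circ\psi^* = 0$. By \eqref{prop:whitehead-product-vs-k-invariant}(3) the map $\psi^*\colon H^4(K(C,4);\Q)=C^*\otimes\Q \to H^4(K(B,2);\Q)=(\Gamma_2B)^*\otimes\Q$ coincides with $\wt{\phi}^*$, so $j^*$ vanishes on every $\wt{\phi}^*(c)$ and hence on the ideal they generate. This produces the desired factorization.

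For the isomorphism under the regularity hypothesis, I would run the rational Serre spectral sequence of $K(C,3)\to X\to K(B,2)$. Rationally,
\[
E_2^{*,*} = H^*(K(B,2);\Q)\otimes H^*(K(C,3);\Q) \approx \Sym(B^*\otimes\Q)\otimes \Lambda(C^*\otimes\Q),
\]
polynomial on generators in bidegree $(2,0)$ tensored with exterior on generators in bidegree $(0,3)$. Since the base cohomology is concentrated in even total degree and the exterior generators occupy the single row $q=3$, the only possible nontrivial differential is the transgression $d_4\colon E_4^{0,3}\to E_4^{4,0}$, extended multiplicatively by the Leibniz rule. Under the standard identification of the transgression with $\psi^*$ (via $K(C,3)=\Omega K(C,4)$), part (3) of \eqref{prop:whitehead-product-vs-k-invariant} forces $d_4(c) = \wt{\phi}^*(c)$ for $c\in C^*\otimes \Q$.

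Thus $(E_2,d_4)$ is literally the Koszul complex over $R=\Sym(B^*\otimes\Q)$ on the sequence $\wt{\phi}^*(c_1),\dots,\wt{\phi}^*(c_e)$, where $\{c_\alpha\}$ is a basis of $C^*\otimes\Q$. Regularity of $\phi$ says exactly that this is a regular sequence in $R$, so the Koszul homology is concentrated in bidegree $(*,0)$ and equals $R/(\wt{\phi}^*(C^*\otimes\Q))$. Since $E_5$ is then concentrated on the base edge there are no higher differentials, and since $E_\infty$ sits in a single filtration in each cohomological degree there is no extension ambiguity; the edge homomorphism is $j^*$, and we read off the claimed isomorphism of graded rings. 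The main obstacle is identifying $d_4$ with $\wt{\phi}^*$, which we have just done by combining naturality of the transgression with \eqref{prop:whitehead-product-vs-k-invariant}(3); once that is settled, the rest is formal Koszul-complex bookkeeping.
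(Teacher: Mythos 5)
Your proposal is correct and follows essentially the same route as the paper: the rational Serre spectral sequence of $K(C,3)\to X\to K(B,2)$, identification of the transgression $d_4$ with $\pm\wt{\phi}^*$ via \eqref{prop:whitehead-product-vs-k-invariant}, and collapse at $E_5$ under the regularity hypothesis (which is exactly the Koszul-complex/regular-sequence argument you spell out). The extra bookkeeping you include (the explicit factorization, multiplicativity, absence of extension problems) is just a more detailed rendering of the paper's argument, not a different one.
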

\begin{proof}
The Serre spectral sequence for the fibration sequence $K(C,3)\xra{i}
F\xra{j} K(B,2)$ has  
\[
E_2=E_4=H^*(K(B,2); H^*(K(C,3);\Q)) \approx \Sym(B^*\otimes \Q)\otimes
\Lambda(C^*\otimes \Q).
\]
The first non-trivial differential is $d_4\colon E_4^{0,3}\ra
E_4^{4,0}$, which must be  $\pm\wt\phi^*$ by
\eqref{prop:whitehead-product-vs-k-invariant}.  The 
regularity condition  is what
is needed for $E_5^{p,q}$ with $q>0$ to vanish, so that the spectral
sequence collapses to 
$E_\infty^{*,*}=E_5^{*,0}$.  

\end{proof}

\subsection{An explicit group model for central extensions}
\label{subsec:explicit-group-model}

Every space $X\in \mc{C}$ is equivalent to the classifying space of a
topological group.  We give an explicit construction of such a group
as a central extension.  In particular, given a bilinear map $\omega\colon
B\otimes B\ra C$ between finitely generated free groups, we construct
a topological group $G_\omega$ so that $X=BG_\omega\in \mc{C}$ has
quadratic invariant $\phi$ with $\omega$ as its bilinear extension,
and thus sits in a fiber sequence $K(C,3)\ra X\ra K(B,2)\xra{\wt\phi}
K(C,4)$.   In particular, this produces an explicit model for our
extension groups
$U(1)^d\times_\phi K(\Z,2)$.

Let $K(B,1)_\bullet$ and $K(C,2)_\bullet$ be simplicial abelian
groups, degreewise free, together with identifications $B\approx
\pi_1K(B,1)_\bullet$ and $C\approx \pi_2K(C,2)_\bullet$, and all other
homotopy groups trivial.  There exists a map 
\[
\kappa\colon K(B,1)_\bullet\otimes K(B,1)_\bullet \ra
K(C,2)_\bullet
\]
of simplicial abelian groups
inducing $\omega$ on $\pi_2$, which is unique up to homotopy.  We fix
such a choice of $\kappa$.

Consider the composite map of simplicial sets
\[
K(B,1)_\bullet \times K(B,1)_\bullet \xra{(x,y)\mapsto x\otimes y}
K(B,1)_\bullet\otimes K(B,1)_\bullet\xra{\kappa} K(C,2)_\bullet.
\]
Taking geometric realization produces a map of spaces which we also denote
\[
\kappa\colon K(B,1)\times K(B,1)\ra K(C,2),
\]
which is a bilinear map between topological abelian groups (and so
factors through $K(B,1)\sm K(B,1)$).  Let
 $G_\omega$ be the space $K(B,1)\times
K(C,2)$ with  group law\footnote{The group $G_\omega$ really depends
  on the choice of $\kappa$, but all our computations about it will
  only depend on $\omega$.}
\[
(y,x)\cdot (y',x') := (y+y', -\kappa(y,y')+x+x'), \qquad
(y,x),(y',x')\in G_\omega.
\]
Note that inversion in $G_\omega$ is given by 
\[
(y,x)^{-1} = (-y, -\kappa(y,y)-x),
\]
while the commutator  is given by 
\begin{equation}\label{eq:g-commutator}
  (y,x)\cdot(y',x')\cdot(y,x)^{-1}\cdot (y',x')^{-1} = (0,
  -\kappa(y,y')+\kappa(y',y)).  
\end{equation}
Thus $G_\omega$ is a central extension of $K(B,1)$ by $K(C,2)$,
and we have evident isomorphisms $\pi_1G_\omega\approx B$ and $\pi_2G_\omega\approx
C$.  

The commutator $G_\omega\sm G_\omega\ra G_\omega$ defines the Samelson product
\[
\pairing{-}{-} \colon \pi_p G_\omega\times \pi_q G_\omega\ra \pi_{p+q} G_\omega.
\]
\begin{prop}
The Samelson product
$\pi_1G_\omega\times \pi_1G_\omega\ra 
\pi_2G_\omega$ is given by 
\[
\pairing{b}{b'}=-\omega_\sym(b,b') := -\omega(b,b')-\omega(b',b).
\]
\end{prop}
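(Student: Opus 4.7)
The plan is to compute the commutator map $c\colon G_\omega\times G_\omega\to G_\omega$ directly from the group law, use the fact that it vanishes on $G_\omega\vee G_\omega$ so it factors through the smash, and then identify the resulting $\pi_2$-class by tracking through the universal example.

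First I would compute $c$ explicitly. Using the formulas for the group law and for inverses in $G_\omega=K(B,1)\times K(C,2)$, and the bilinearity of $\kappa$, a direct expansion of $(y,x)(y',x')(y,x)^{-1}(y',x')^{-1}$ yields \eqref{eq:g-commutator}; in particular, the commutator depends only on $y$ and $y'$ and takes values in the central subgroup $K(C,2)\subset G_\omega$. Equivalently, $c$ fits into a commutative diagram
\[
\xymatrix{
G_\omega \wedge G_\omega \ar[r]^-{\bar c} \ar[d] & K(C,2) \ar[d] \\
K(B,1) \wedge K(B,1) \ar[r]_-{\bar\kappa^{(2)}} & K(C,2)
}
\]
where $\bar\kappa^{(2)}(y,y'):=\kappa(y',y)-\kappa(y,y')$ is the map obtained (after descending to the smash) from the bilinear map $(y,y')\mapsto\kappa(y',y)-\kappa(y,y')$.

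Next, I would represent classes $b,b'\in\pi_1 G_\omega=B$ by pointed maps $\tilde b,\tilde b'\colon S^1\to K(B,1)\subset G_\omega$, so that the Samelson product $\pairing{b}{b'}\in\pi_2 G_\omega=C$ is represented by the composite $S^1\wedge S^1\xra{\tilde b\wedge\tilde b'}G_\omega\wedge G_\omega\xra{\bar c}G_\omega$, or equivalently by $\bar\kappa^{(2)}\circ(\tilde b\wedge\tilde b')\colon S^2\to K(C,2)$. Because $\bar\kappa^{(2)}$ splits as a difference, this class equals
\[
[\bar\kappa\circ(\tilde b'\wedge\tilde b)]-[\bar\kappa\circ(\tilde b\wedge\tilde b')]\in\pi_2 K(C,2)=C.
\]

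It remains to identify these two terms. By the defining property of $\kappa$, the map $\bar\kappa\colon K(B,1)\wedge K(B,1)\to K(C,2)$ induces on $\pi_2$, under the Hurewicz/Künneth identification $\pi_2(K(B,1)\wedge K(B,1))\approx H_2(K(B,1)\wedge K(B,1))\approx B\otimes B$ (where $b\otimes b'$ corresponds to the cross product $\tilde b\wedge\tilde b'$), precisely $\omega\colon B\otimes B\to C$. Hence the class of $\bar\kappa\circ(\tilde b\wedge\tilde b')$ is $\omega(b,b')$, while the class of $\bar\kappa\circ(\tilde b'\wedge\tilde b)$ is $\omega(b',b)$ up to the sign of the swap map $S^1\wedge S^1\to S^1\wedge S^1$. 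The only place a subtle sign can enter is this swap, which has degree $(-1)^{1\cdot 1}=-1$ on $S^2$; combining this with the two minus signs in $\bar\kappa^{(2)}$ gives
\[
\pairing{b}{b'}=-\omega(b',b)-\omega(b,b')=-\omega_{\sym}(b,b'),
\]
as claimed. The main obstacle is simply the bookkeeping of the sign from the swap on $S^1\wedge S^1$ together with the sign already present in $\bar\kappa^{(2)}$; once that is handled carefully, the rest is formal manipulation.
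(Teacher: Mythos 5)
Your argument is correct and is essentially the paper's own proof: expand the commutator via the group law to obtain \eqref{eq:g-commutator}, observe it factors through $K(B,1)\sm K(B,1)$, use that $\kappa$ induces $\omega$ on $\pi_2$, and account for the degree $-1$ of the swap on $S^1\sm S^1$. One small wording quibble: $\bar\kappa^{(2)}$ contains a single explicit minus sign (hitting the $\omega(b,b')$ term), the other minus coming from the swap (hitting $\omega(b',b)$), but your displayed conclusion $\pairing{b}{b'}=-\omega(b',b)-\omega(b,b')=-\omega_\sym(b,b')$ is exactly right.
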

\begin{proof}
The map $\kappa\colon K(B,1)\sm K(B,1)\ra K(C,2)$ induces $\omega$
on homotopy groups by construction, and therefore $(y,y')\mapsto
\kappa(y',y)$ induces 
$(b,b')\mapsto -\omega(b',b)$ on homotopy groups, with sign introduced
by switching the order of the two classes in $\pi_1G_\omega$.  The result follows 
from \eqref{eq:g-commutator}. 
\end{proof}

\begin{prop}
Let $X=BG_\omega$.  Then the Whitehead product $\pi_2X\times
\pi_2X\ra \pi_3 X$ is given by 
\[
[b,b'] = \omega_\sym(b,b')= \omega(b,b')+\omega(b',b).
\]
\end{prop}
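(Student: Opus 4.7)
The plan is to reduce this calculation to the Samelson product computation from the previous proposition, using the standard compatibility between Samelson products on a topological group $G$ and Whitehead products on the classifying space $BG$.

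First, I would invoke the following classical fact: under the canonical identification $\pi_{k+1}(BG) \cong \pi_k(G)$ coming from the equivalence $G \simeq \Omega BG$, the Whitehead product on $\pi_*(BG)$ corresponds, up to an overall sign, to the Samelson product on $\pi_*(G)$. Explicitly, for classes $\bar\alpha \in \pi_{p+1}(BG)$ and $\bar\beta \in \pi_{q+1}(BG)$ with adjoints $\alpha \in \pi_p(G)$ and $\beta \in \pi_q(G)$, one has $[\bar\alpha, \bar\beta]_W = \pm \pairing{\alpha}{\beta}_S$. The sign is fixed by the conventions of \S\ref{sec:conventions}; in the case $p = q = 1$ relevant here, it works out to be a minus sign, consistent with taking the adjunction through the suspension $\Sigma G \to BG$.

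Second, specializing to $p = q = 1$ and substituting the formula $\pairing{b}{b'} = -\omega_{\sym}(b, b')$ from the previous proposition gives $[b, b'] = -\pairing{b}{b'} = \omega_{\sym}(b, b')$, as claimed. No further topological input about $G_\omega$ is needed.

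The main obstacle is purely bookkeeping: one has to pin down the sign conventions for the Whitehead product, the Samelson product, and the adjunction $\pi_{k+1}(BG) \cong \pi_k(G)$ and verify they combine to give the stated formula with no extraneous sign. All of the genuine computation about the group $G_\omega$ — namely the commutator identity \eqref{eq:g-commutator} and its consequence for the Samelson product — was carried out in the preceding proposition, so the present proposition is essentially a corollary of that calculation together with the Samelson-Whitehead compatibility.
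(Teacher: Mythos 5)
Your proposal is correct and is essentially the paper's own argument: the paper also deduces the result directly from the Whitehead--Samelson relation \eqref{eq:sam-whit-relation}, which for classes in $\pi_2(BG_\omega)$ gives $[b,b'] = -\pairing{b}{b'}$, and then substitutes the Samelson product formula from the preceding proposition. The sign you identify agrees with the paper's convention $\nu[\alpha,\beta]=(-1)^{p-1}\pairing{\nu\alpha}{\nu\beta}$ with $p=2$.
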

\begin{proof}
This is a special case of the relation between the Whitehead and
Samelson products \eqref{eq:sam-whit-relation}; in this dimension, the
two differ by a sign.
\end{proof}

Thus, the space $X=BG_\omega$ has quadratic invariant  
$\phi\colon B\ra C$, with associated Hessian form
$\beta=\omega_\sym\colon B\otimes B\ra C$.

\section{The main theorem}

In this section we restate our main theorem \eqref{thm:main-thm}, but
in terms of our explicit models for $G$, and in somewhat
more 
generality, in that we allow for a torus of rank other than 2.

\subsection{The group $W^T(G)$}

Fix a finitely generated free abelian group $L$, and let $T:= L\otimes
\T$ be the associated torus.  We consider the semidirect product group
$D(T):= \Aut(T)\ltimes T$ with group law
\[
(A,t)\cdot (A',t') := (AA', (A')^{-1}t+t').
\]
Note that $D(T)$ acts on the space $T$ by 
\[
(A,t)\cdot s = A(s+t).
\]

Given a topological group $G$, we define a group
\[
W^T(G) := \Map(T,G) \rtimes D(T)
\]
with group law given by 
\[
(g,f)\cdot (g',f') = (\bigl(s\mapsto g(s)\cdot g'(f^{-1}(s))\bigr), ff' ).
\]
We write $W^T_0(G)\subseteq W^T(G)$ for the identity component, and
$\ol{W}^T(G):= W^T(G)/W^T_0(G)$ for the quotient.  

Given $\omega\colon B\otimes B\ra C$, we will compute the homotopy
type of the classifying space $BW_0^T(G_\omega)$, together with the
evident action of $\ol{W}^T(G_\omega)$ on its homotopy groups.  

\subsection{Homology and cohomology of $T$}
\label{subsec:torus-conventions}

Because $T$ is an abelian group, $H_*T$ is naturally a graded
commutative Hopf algebra.  The iterated coproduct $\psi\colon H_pT\ra
H_1T\otimes\cdots \otimes H_1T$ gives an
identification of $H_pT$ with the antisymmetric invariants
$\Lambda_pL\subseteq L^{\otimes p}$.   In terms of this identification the
Pontryagin product $H_1T\otimes 
H_1T\ra H_2T$ is given by $t\otimes t'\mapsto t\wedge t':=t\otimes
t'-t'\otimes t\in \Lambda_2 L$ (a direct consequence of the fact that
$H_*T$ is a \emph{graded} Hopf algebra: $\psi(tt')=\psi(t)\psi(t')= (t\otimes
1+1\otimes t)(t'\otimes 1+1\otimes t')=tt'\otimes 1 + (t\otimes
t'-t'\otimes t)+1\otimes tt'$).

The Kronecker pairing $(-,-)\colon H^*(T;B)\otimes H_*T\ra
B$ then gives an identification $H^p(T;B) \xra{\sim}
\Hom(H_pT,B)=\Hom(\Lambda_p L,B )$.   We note the following formula
for the cup product in these terms, which involves a tricky sign.
\begin{prop}\label{prop:cup-product-sign}
Let $f\in H^1(T;B)$ and $f'\in H^1(T;B')$ be cohomology classes
corresponding to $m\in \Hom(L,B)$ and $m'\in 
\Hom(L,B')$ via the Kronecker pairing.  Then with respect to the
Kronecker pairing, the cup product $f\smile f'\in H^2(T;B\otimes B')$
corresponds to  
\[
(\Lambda_2L \hookrightarrow L\otimes L \xra{-m\otimes m'}
B\otimes B') \in \Hom(\Lambda_2L, B\otimes B').
\]
Thus, $f\smile f'$ corresponds to the function $t\wedge t' \mapsto
(-m\otimes m')(t\wedge t')=
-m(t)\otimes m'(t')+m(t')\otimes m'(t)$.  
\end{prop}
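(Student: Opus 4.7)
The plan is to reduce the computation of $f \smile f'$ to the coproduct structure of the Hopf algebra $H_*T$, then read off the sign from the Koszul rule in the graded Kronecker pairing. Concretely, I will use the identity $\pairing{f \smile f'}{c} = \pairing{f \otimes f'}{\psi c}$, where $\psi \colon H_*T \to H_*T \otimes H_*T$ is the coproduct dual to the diagonal (equivalently, induced by multiplication on $T$), together with the graded Kronecker pairing on tensor products
\[
\pairing{f \otimes f'}{a \otimes b} = (-1)^{|f'||a|}\,\pairing{f}{a}\otimes \pairing{f'}{b}.
\]
This sign convention is exactly where the final minus sign will come from.

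The only nontrivial input is the explicit form of $\psi(t \wedge t')$ restricted to the bidegree $(1,1)$ summand $H_1T \otimes H_1T$. Since $H_*T$ is a graded-commutative Hopf algebra with $L = H_1T$ primitive, and since $t \wedge t'$ denotes the Pontryagin product (as in \S\ref{subsec:torus-conventions}), graded multiplicativity of $\psi$ in the tensor product of algebras forces
\[
\psi(t \wedge t') = (t \wedge t') \otimes 1 + t \otimes t' - t' \otimes t + 1 \otimes (t \wedge t').
\]
The minus sign on $t' \otimes t$ is precisely the Koszul sign for swapping two odd-degree elements; only the middle two summands contribute when pairing against $f \otimes f'$ for degree reasons.

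Pairing $f \otimes f'$ against $t \otimes t' - t' \otimes t$ and invoking the Koszul sign $(-1)^{|f'||t|} = -1$ recovers the claimed formula $-m(t)\otimes m'(t') + m(t')\otimes m'(t)$ on the nose. The only real obstacle is bookkeeping: one must fix a consistent sign convention for the graded Kronecker pairing on tensor products and recognize that the lone nontrivial sign in the proposition arises once (not twice) between the Hopf coproduct and the pairing. As a sanity check, on $T = (S^1)^2$ with standard generators $\sigma_i \in H^1$ dual to $t_i \in H_1$, one finds $\pairing{\sigma_1 \smile \sigma_2}{t_1 \wedge t_2} = -1$, matching both the formula and the standard orientation of the torus.
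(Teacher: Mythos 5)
Your proposal is correct and is essentially the paper's own argument: both identify $(f\smile f',u)$ with the graded Kronecker pairing of $f\otimes f'$ against the $(1,1)$-component of the coproduct $\psi$, compute that component from primitivity of $H_1T$ and graded multiplicativity (exactly the Hopf-algebra computation already recorded in \S\ref{subsec:torus-conventions}, where $\psi$ becomes the inclusion $\Lambda_2L\hookrightarrow L\otimes L$), and extract the single Koszul sign from moving the odd-degree $f'$ past the odd-degree homology class. Nothing further is needed.
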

\begin{proof}
Using the \emph{graded} Kronecker pairing $(-,-)\colon
(H^*(T;B)\otimes 
H^*(T;B'))\otimes (H_*T\otimes H_*T)\ra B\otimes B'$ we have
\[
(f\smile f',u) = (f\otimes f',\sum v\otimes v') =
-\sum (f,v)(f',v'),
\]
for $u\in H_2T$ where $\psi(u)=\sum v\otimes v' \in H_1T\otimes
H_1T$ (the component of the coproduct in degree $(1,1)$).   In terms
of our identifications, $\psi\colon H_2T\ra H_1T\otimes H_1T$ is the
inclusion $\Lambda_2L\ra L\otimes L$, and the formula follows.

(The additional sign here comes from a conflict of two sign
conventions: the graded Kronecker pairing $H^1(T;B)\otimes
H^1(T;B')\otimes H_1T\otimes H_1T\ra B\otimes B'$, which is what is
used to identify the coproduct $\psi$ as dual to cup product, and
which introduces a sign,  vs.\ the
evaluation pairing $\Hom(\Lambda_2L,B)\otimes
\Hom(\Lambda_2L,B')\otimes L\otimes L\ra B\otimes B'$, which does not
introduce a sign.)
\end{proof}

The Kronecker pairing generalizes to the ``slant product''
\[
f,v\mapsto f\nabla v \colon H^{p+q}(T;M)\times H_qT \ra H^p(T;M)
\]
by
\[
f\nabla v = \sum f'(f'',v),
\]
where $f\in H^{p+q}(T;M)$, $v\in H^qT$, and $\sum f'\otimes
f''= \mult_* f$, the image of $f$ under the map $H^*(T;M)\ra
H^*(T\times T; M)\approx H^*(T;M)\otimes H^*T$ induced by
multiplication in $T$.  Thus  if $\len{f}=\len{v}$ then $f\nabla
v=(f,v)$.

Given $n\in \Hom(\Lambda_kL, M)$ and
$t\in L$, we define the contraction operation $n\nabla t \in
\Hom(\Lambda_{k-1}L,M)$ by  
\[
(n\nabla t)(\tau) := n(t\wedge \tau).
\]
\begin{prop}\label{prop:contraction-pairing}
With respect to the usual identifications $H_1T=L$ and
$H^k(T;M)=\Hom(\Lambda_k L, M)$, the slant product coincides with the
contraction pairing.
\end{prop}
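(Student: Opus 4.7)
The plan is to use the Hopf-algebra duality between $H_*T$ and $H^*T$. Since the multiplication $\mult\colon T\times T\to T$ induces the Pontryagin product $\mult_*\colon H_*T\otimes H_*T\to H_*T$, and this Pontryagin product is exactly the wedge product $\Lambda L\otimes \Lambda L\to \Lambda L$ by the discussion in \S\ref{subsec:torus-conventions}, the cohomological pullback $\mult^*$ is its $\Z$-linear dual. Concretely: for $n\in \Hom(\Lambda_k L, M)\cong H^k(T;M)$, the pullback $\mult^* n\in H^*(T)\otimes H^*(T;M)$ has, in bidegree $(i,k-i)$, the element of $\Hom(\Lambda_i L\otimes \Lambda_{k-i}L, M)$ given by $(a,b)\mapsto n(a\wedge b)$.

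First I would record this explicit formula for $\mult^* n$. Then, to compute $f\nabla t$ for $f=n$ and $t\in H_1 T = L$, I would apply the definition $\sum f'(f'',v)$: the only contribution of correct degree comes from the bidegree-$(k-1,1)$ component of $\mult^* n$, and pairing its second factor against $t$ produces an element of $\Hom(\Lambda_{k-1}L, M)$. Unwinding the formula for this component, the result sends $\tau\in \Lambda_{k-1}L$ to $n(t\wedge \tau)$, which is exactly $(n\nabla t)(\tau)$. As a baseline check, when $\len{f}=\len{v}=1$ the asserted equality $f\nabla t=(f,t)$ corresponds to $n(t\wedge 1)=n(t)$ under the identification $\Lambda_0 L\cong \Z$.

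The main obstacle will be sign bookkeeping. The graded Kronecker pairing satisfies
\[
(f'\otimes f'',\,\tau\otimes t)=(-1)^{\len{f''}\cdot\len{\tau}}(f',\tau)(f'',t),
\]
which for $\len{f''}=1$ and $\len{\tau}=k-1$ contributes a sign $(-1)^{k-1}$; on the other hand, graded commutativity of the wedge product gives $\tau\wedge t=(-1)^{k-1}\,t\wedge \tau$. The real content of the proof is to verify that these two signs cancel exactly, so that the statement holds with no sign correction, as asserted. This is the reverse-side analog of the sign identified in Proposition \ref{prop:cup-product-sign}, and fixing the conventions once there determines the answer here.
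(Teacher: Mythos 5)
Your proposal is correct and follows essentially the same route as the paper: pair $f\nabla t$ against $\tau\in\Lambda_{k-1}L$, use that $\mult^*$ is dual (via the graded Kronecker pairing) to the Pontryagin/wedge product, and note that the sign $(-1)^{k-1}$ from the graded pairing cancels against the sign $(-1)^{k-1}$ from $\tau\wedge t=(-1)^{k-1}t\wedge\tau$. The paper's proof is exactly this computation written out as a chain of equalities, so no changes are needed.
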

\begin{proof}
Let $f\in H^k(T;M)$ so $n=(f,-)\colon H_kT=\Lambda_k L\ra M$.  For
$t\in H_1T=L$ and $u\in H_{k-1}T=\Lambda_{k-1}L$ we have
\begin{align*}
(f\nabla t,u) &= (\sum f'(f'',t),u)= \sum
(f',u)(f'',t)
=(-1)^{k-1}\sum(f'\otimes f'', u\otimes t)
\\
&= (-1)^{k-1}(f, \mult_*(u\otimes t))= (-1)^{k-1}(f, u\wedge
  t)=(f,t\wedge u) = n(t\wedge u) = (n\nabla t)(u),
\\
\end{align*}
so $f\nabla v$ corresponds to $n\nabla v$.
\end{proof}
For instance, if $k=2$, then $H^2(T;M)\otimes H_1T\ra H^1(T;M)$ is
described by     
$(n\nabla t)(t')=n(t\wedge
t') = n(t\otimes t'-t'\otimes t)$. 

Finally, given a bilinear map $\gamma\colon L\otimes L\ra C$, we
will use the same symbol $\gamma$ for its restriction $\Lambda_2 L\ra
C$ (e.g.,  $\gamma=\omega(m\otimes m')$ in the statement of the 
\eqref{thm:general-thm} below).

\subsection{The homotopy groups of $BW_0^T(G_\omega)$}

We now describe $\pi_*BW^T(G_\omega)$, its quadratic invariant,  the evident
action of $\ol{W}(G_\omega)$ on homotopy groups, and its cohomology
ring.  After this we briefly explain how \eqref{thm:main-thm} is read
off from this calculation.

\begin{thm}\label{thm:general-thm}
Let $\omega\colon B\otimes B\ra C$ be a bilinear  function with
associated quadratic function $\phi$ and Hessian form $\beta$, and let
$G_\omega$ be a topological group associated with $\omega$ as in
\eqref{subsec:explicit-group-model}. 
The space $X=BW^T_0(G_\omega)$ is an object of $\mc{C}$, with 
\begin{align*}
  \pi_3 X &\approx C,
\\
  \pi_2 X &\approx L\times B\times \Hom(L,C),
\end{align*}
and with quadratic invariant $\phi^\sharp\colon \pi_2X\ra \pi_3X$
given by 
\[
\phi^\sharp(t,y,x) = \phi(y) + xt,\qquad t\in L,\; y\in B,\; x\in \Hom(L,C).
\]
Furthermore we have
\[
  \ol{W}^T(G_{\omega}) \approx  \Aut(L)\ltimes E,
\]
where $E$ is a group with underlying set 
\[
E= \Hom(L,B)\times \Hom(\Lambda_2 L, C)
\]
and group law
\[
(m,n)\cdot (m',n') = (m+m', \omega(m\otimes m') + n + n'), \qquad
m,m'\in \Hom(L,B),\; n,n'\in \Hom(\Lambda_2L, C).
\]
while the semi-direct product is defined via the action of $\Aut(L)$
on $E$ given by
\[
A\propto (m,n) = (mA^{-1}, n(\Lambda_2A^{-1})). 
\]
The action of $E\subseteq \ol{W}^T(G_{\omega})$ on $\pi_*X$ is given by 
\begin{align*}
  (m,n) \propto c &= c, & c\in \pi_3X
\\
  (m,n)\propto (t,y,x) &= (t, y+mt, x- \beta(y,m) - \omega(mt, m) +
  n\nabla t), & (t,y,x)\in \pi_2X,
\end{align*}
while the action of $\Aut(L)\subseteq \ol{W}^T(G_{\omega})$ on
$\pi_*X$ is given by  
\begin{align*}
  A \propto c &= c, & c\in \pi_3X,
\\
  A\propto (t,y,x) &= (At,y,xA^{-1}), & (t,y,x)\in \pi_2X.
\end{align*}
\end{thm}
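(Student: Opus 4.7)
The plan is to exploit the semidirect product decomposition $W^T_0(G_\omega) = \Map_0(T, G_\omega) \rtimes T$, which at the level of classifying spaces gives a principal $T$-fibration $B\Map_0(T, G_\omega) \to X \to BT$, and to analyze the discrete quotient $\ol{W}^T(G_\omega) = \pi_0 W^T(G_\omega)$ via the central extension $K(C, 2) \to G_\omega \to K(B, 1)$.

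For the homotopy groups, combining the $BT$-fibration with the Postnikov tower of $G_\omega$ applied to the mapping space gives $\pi_2 \Map_0(T, G_\omega) \approx H^0(T; C) = C$ and a short exact sequence $0 \to H^1(T; C) \to \pi_1 \Map_0(T, G_\omega) \to B \to 0$; this splits because, as a space, $G_\omega$ literally is $K(B,1) \times K(C, 2)$. The long exact sequence for the $BT$-fibration then adds the $L$ summand, yielding $\pi_2 X \approx L \oplus B \oplus \Hom(L, C)$ and $\pi_3 X \approx C$.

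For the quadratic invariant $\phi^\sharp$, I would translate Whitehead products in $\pi_2 X$ into Samelson products in $\pi_1 W^T_0(G_\omega)$ and compute them as commutators in the explicit group model. Of the six bilinear cases, four vanish: $\langle t, t' \rangle = 0$ by abelianness of $T$; $\langle x, x'\rangle = \langle y, x \rangle = 0$ because $\Hom(L, C)$ lifts from the central subgroup $K(C, 2) \subset G_\omega$, so commutes with everything inside $\Map_0(T, G_\omega)$; and $\langle t, y \rangle = 0$ because $y$ comes from constant, hence translation-invariant, maps. The bracket $\langle y, y' \rangle = -\omega_\sym(y, y')$ descends from the Samelson product in $G_\omega$ already computed in \S\ref{subsec:explicit-group-model}. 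The key non-trivial computation is $\langle t, x \rangle$: representing $x \in H^1(T; C)$ by a bidegree-$(1,1)$ class $\tilde x\colon S^1 \times T \to K(C, 2)$ and computing the commutator in $W^T_0(G_\omega)$ as $\tilde x - \tau_{-s_2 t}^* \tilde x$, I extract its class in $H^2(S^1 \wedge S^1; C) = C$ via the slant product identification of \eqref{prop:contraction-pairing}, obtaining $x \nabla t$. Combined with the $\eta$-composition (which is nonzero only on the $B$-factor, agreeing with $\phi$ by construction of $G_\omega$; on $L$ and $\Hom(L, C)$ it vanishes by torsion-freeness of $C$) and \eqref{prop:whitehead-product-vs-k-invariant}, this assembles to $\phi^\sharp(t, y, x) = \phi(y) + xt$.

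For $\ol{W}^T(G_\omega)$, centrality of $K(C, 2)$ gives a central extension on $\pi_0$ of the mapping spaces, $1 \to \Hom(\Lambda_2 L, C) \to E \to \Hom(L, B) \to 1$. The 2-cocycle is computed by pulling back the cochain-level bilinear $\kappa\colon K(B,1) \otimes K(B, 1) \to K(C, 2)$ representing $\omega$ along a pair of maps $m, m'\colon T \to K(B, 1)$ and reading off the resulting cup product via \eqref{prop:cup-product-sign}, giving precisely $\omega(m \otimes m')$. The semidirect product with $\Aut(L) = \pi_0 D(T)$ is then immediate. The conjugation actions of $E$ and $\Aut(L)$ on $\pi_* X$ follow by conjugating loops in $W^T_0(G_\omega)$ by lifts to $W^T(G_\omega)$ and reading off the result in $(t, y, x)$-coordinates. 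The main obstacle will be the term $-\omega(mt, m)$ in the action on the $x$-component: this captures a second-order interaction where conjugation by an element representing $m$ picks up a central cocycle correction from the $\kappa$-term in the group law of $G_\omega$, and then the torus translation by $t$ selects the $mt$-component. Handling this cleanly requires the cochain-level sign conventions established in \S\ref{sec:conventions}.
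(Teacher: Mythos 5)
Your proposal is correct and takes essentially the same route as the paper: decompose $W^T(G_\omega)=\Map(T,G_\omega)\rtimes D(T)$, read the homotopy groups off the space-level splitting $G_\omega = K(B,1)\times K(C,2)$, compute the Whitehead products via Samelson products/commutators in the group model (your translation-difference-plus-slant-product computation of $\langle t,x\rangle$ is exactly the paper's $D\circ C_v^*$ argument), obtain the $\pi_0$ cocycle $\omega(m\otimes m')$ from $\kappa$ together with the cup-product sign, and derive the actions by conjugation. The only cosmetic difference is that you check the $\eta$-compositions on each factor separately, whereas the paper recovers the full quadratic invariant from the Hessian alone, using $\phi^\sharp(v)=\tfrac12\beta^\sharp(v,v)$ on torsion-free groups.
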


\begin{rem}
The central extension $E$ corresponds to the cocycle $\gamma\colon
\Hom(L,B)\times \Hom(L,B)\ra \Hom(\Lambda^2L,C)$ defined by
$(m,m')\mapsto \omega(m\otimes m')$.  Up  to isomorphism, this central
extension depends only on the antisymmetrization of $\gamma$, which
satisfies $\gamma_\antisym(m,m')=\beta(m\otimes m')-\beta(m'\otimes
m)$ and so depends only on the quadratic function $\phi$. 
\end{rem}

The proof of \eqref{thm:general-thm} is given in the next several
sections, culminating in \eqref{subsec:proof-general-thm}.

It is straightforward to check that $\phi^\sharp$ is regular.  For
instance, if we choose coordinates and write $t=(t_1,\dots,t_r)\in
L=\Z^r$, $(y_1,\dots,y_d)\in 
B=\Z^d$, and $(x_{11},\dots,x_{re})\in \Hom(L,C)=\Z^{r\times e}$, and
write $\phi=(\phi_1,\dots,\phi_e)$, then 
$\phi^\sharp=(\phi^\sharp_1,\dots, \phi^\sharp_e)$  where 
$\phi_k^\sharp(t,y,x) = \phi_k(y)+ t_1x_{1k}+\cdots + t_rx_{rk}$.
This sequence of polynomials is easily seen to be regular (when $r\geq
1)$; in fact, it 
remains 
regular  after passing to the quotient ring in which
$y_1=\cdots =y_d=0$. 
From \eqref{prop:cohomology-23-space} we obtain  the following.
\begin{cor}
If the rank of 
$T$ is positive, then 
\begin{align*}
H^*(BW^T_0(G_{\omega});\Q) &\approx \Sym((L\times B\times
\Hom(L,C))^*\otimes \Q)/(\text{image of $(\wt\phi^\sharp)^*$})
\\
&\approx
\Q[t_1,\dots,t_r,y_1,\dots,y_d,x_{11},\dots,x_{re}]/(\phi_1^\sharp,\dots,
\phi_r^\sharp),
\end{align*}
with the evident action by $\ol{W}^T(G_{\omega})^\op$.  
\end{cor}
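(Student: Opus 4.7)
The plan is to combine Theorem \ref{thm:general-thm} with Proposition \ref{prop:cohomology-23-space}. Theorem \ref{thm:general-thm} identifies $X = BW^T_0(G_\omega)$ as an object of $\mc{C}$ with $\pi_2 X \approx L \times B \times \Hom(L,C)$, $\pi_3 X \approx C$, and quadratic invariant $\phi^\sharp(t,y,x) = \phi(y) + xt$. Proposition \ref{prop:cohomology-23-space} then yields the presentation
\[
H^*(X;\Q) \approx \Sym((L \times B \times \Hom(L,C))^* \otimes \Q) / \bigl(\text{image of } (\wt{\phi^\sharp})^*\bigr)
\]
as soon as one verifies that $\phi^\sharp$ is regular. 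The passage to the second, coordinatized presentation is just unpacking: a basis of $C^*\otimes \Q$ is sent by $(\wt{\phi^\sharp})^*$ to the sequence $\phi_1^\sharp,\dots,\phi_e^\sharp \in \Q[t,y,x]$. The functoriality of the cohomology ring with respect to the $\ol{W}^T(G_\omega)^\op$-action is automatic, because Proposition \ref{prop:cohomology-23-space} is natural in the space equipped with its $\pi_0$ of self-equivalences, and Theorem \ref{thm:general-thm} records the action on $\pi_2X, \pi_3X$ explicitly in the chosen coordinates.

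The substantive step is therefore the regularity verification. My approach is geometric. Since $R = \Q[t,y,x]$ is Cohen--Macaulay, it suffices to show that the zero locus $V(\phi_1^\sharp,\dots,\phi_e^\sharp)\subset \Spec R$ has codimension exactly $e$. I would stratify by whether $t = 0$: on the open stratum $\{t \neq 0\}$, the hypothesis $r \geq 1$ lets one uniquely solve each equation $\phi_k^\sharp = 0$ for one of the $x_{ik}$ in terms of the remaining variables, so the fiber of the projection to $(t,y)$-space is an affine space of dimension $re-e$ and this stratum of $V$ has codimension exactly $e$. On the closed stratum $\{t = 0\}$ the equations collapse to $\phi_k(y) = 0$; the corresponding piece of $V$ has codimension in $\Spec R$ at least $r$ in the $t$-directions alone, and hence contributes only a lower-dimensional component (strictly so when $r \geq 1$). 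Thus $V$ is equidimensional of codimension $e$, as required for a complete intersection in a Cohen--Macaulay ring. In the main case of interest ($e=1$, corresponding to $C=\Z$) the check is trivial: $\phi^\sharp = \phi(y) + \sum_i t_i x_i$ is a single nonzero polynomial, hence automatically a non-zero-divisor in $R$.

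The principal obstacle is precisely this regularity check, and it is exactly the rank hypothesis $r \geq 1$ that makes the $\{t=0\}$ locus too small to interfere with the codimension count; without it, the polynomials $\phi_k^\sharp$ degenerate to $\phi_k(y)$, which need not be regular. Once regularity is in hand, the spectral sequence of Proposition \ref{prop:cohomology-23-space} collapses and produces the ring isomorphism, with the $\ol{W}^T(G_\omega)^\op$-action transported from the action on $\pi_2X$ described in Theorem \ref{thm:general-thm}.
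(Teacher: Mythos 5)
Your proof has the same skeleton as the paper's: both deduce the corollary by feeding the homotopy-theoretic data of \eqref{thm:general-thm} into \eqref{prop:cohomology-23-space}, so the only substantive point is the regularity of $\phi^\sharp$, and both treat the transport of the $\ol{W}^T(G_\omega)^\op$-action as automatic by functoriality. Where you differ is in how regularity is verified. The paper's route is algebraic: it observes that the sequence $\phi_k^\sharp=\phi_k(y)+\sum_i t_ix_{ik}$ remains regular even after killing $y_1,\dots,y_d$, reducing to the sequence $\sum_i t_ix_{ik}$. Yours is geometric, via a codimension count on the vanishing locus stratified by $t=0$ versus $t\neq0$, using that an ideal of height $e$ generated by $e$ elements in a Cohen--Macaulay ring is generated by a regular sequence; both are legitimate and of comparable length. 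One caveat: your claim that the $\{t=0\}$ stratum is \emph{strictly} lower-dimensional whenever $r\geq1$ is not justified for general $C=\Z^e$. Over $\{t=0\}$ the equations degenerate to $\phi_k(y)=0$, so that stratum has codimension $r+c$, where $c$ is the codimension of $V(\phi_1,\dots,\phi_e)$ in $y$-space, and this can be smaller than $e$ (e.g.\ $r=1$, $d=0$, $e=2$ gives the non-regular sequence $(t_1x_{11},t_1x_{12})$). The paper's own assertion that regularity is ``easily seen'' when $r\geq1$ carries exactly the same imprecision, and in the case that actually feeds into \eqref{thm:main-thm}, namely $C=\Z$ and $e=1$, your argument (like the paper's) is complete, as you note at the end.
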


We obtain the statement of the main theorem \eqref{thm:main-thm} by: 
\begin{itemize}
\item  setting $B=\Z^d$, so $K(B,1)\approx U(1)^d$,
\item setting $C=\Z$,
\item taking $\omega\colon B\otimes B\ra C$ to be any quadratic
  refinement of $\phi\colon B\ra C$,
\item setting $T=\T^2$ so $L=H_1\T^2=\Z^2$, and
\item  identifying $\Z\approx \Lambda_2L$ via the generator
  $e_1\wedge e_2\in \Lambda_2L$, where $e_1=(1,0), e_2=(0,1)\in
  \Z^2=L$.
\end{itemize}
This last identification implies that for $n\in \Z\approx
\Hom(\Lambda^2L, \Z)$ and $t=(t_1,t_2)\in L$, we have that $(n\nabla
t)(e_1)= -t_2$ and $(n\nabla t)(e_2)=t_1$.  With these choices,
$W^T(G_\omega)$ is a model for $\mc{W}^\Sigma(U(1)^d
\times_\phi K(\Z,2))$, and the results of \eqref{thm:main-thm} follow.

\section{Computation of $\pi_*\Map(\Sigma, G_\omega)$}
\label{sec:compute-map-sigma-g}

We fix a topological group $G=G_\omega$ associated to a homomorphism
$\omega\colon B\otimes B\ra C$.   In this section we compute
invariants of the space $\Map(\Sigma,G_\omega)$ for an arbitrary space
$\Sigma$.

\subsection{Bilinear cohomology operations}

Any bilinear map $\alpha\colon B\otimes B'\ra C$
induces a bilinear cohomology operation
\[
\wt\alpha\colon H^p(-;B)\times H^q(-;B')\ra H^{p+q}(-;C)
\]
by 
\[
\wt\alpha(x,y) := \alpha_*(x\smile y),
\]
where the cup product is $\smile\colon H^p(-;B)\times H^q(-;B')\ra
H^{p+q}(-;B\otimes B')$, and 
 $\alpha_*\colon H^*(-;B\otimes B')\ra H^*(-;C)$ is the natural
map induced by the homomorphism on coefficients.  

\begin{rem}\label{rem:bilin-op-symmetry}
We note that
\[
\wt\alpha(x,y) = (-1)^{\len{x}\len{y}}\wt{(\alpha\tau)}(y,x),
\]
where $\tau\colon B'\otimes B\xra{\sim} B\otimes B'$ is the evident
transposition map.  In particular, if $\alpha\colon B\otimes B\ra C$
is symmetric, then
$\wt\alpha(x,y)=(-1)^{\len{x}\len{y}}\wt\alpha(y,x)$.  
\end{rem}

\subsection{The functors $\pi_*\Map(-,G_\omega)$}

We are going to describe
$\pi_k\Map(-,G_\omega)$ (basepoint at the identity
element) as functors on the homotopy category of
spaces, together with their Samelson products.

\begin{prop}\label{prop:map-into-g}
Let $\Sigma$ be an arbitrary space. 
\begin{enumerate}
\item
We have natural bijections of sets
\[
\pi_k \Map(\Sigma, G_\omega) \approx H^{1-k}(\Sigma;B)\times H^{2-k}(\Sigma;C)
\]
for all $k$.
\item The group law on $\pi_k\Map(\Sigma, G_\omega)$ for $k\geq1$ is
  the evident additive one.  
The group law on $\pi_0$ is given by
\[
(u,v)\cdot (u',v') = (u+u', -\wt\omega(u,u')+v+v').
\]
\item
Samelson products $\pi_p\times \pi_q\ra \pi_{p+q}$ on
$\Map(\Sigma,G_\omega)$ are given, in terms of the above bijection, by 
\[
\pairing{(y,x)}{(y',x')} = (0, -(-1)^{p(1-q)}\wt{\omega_\sym}(y,y')).
\]

\end{enumerate}
\end{prop}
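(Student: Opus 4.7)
The plan is to exploit the fact that $G_\omega = K(B,1)\times K(C,2)$ as a \emph{space} (with only the group law twisted by $\kappa$), which reduces everything to standard cohomological calculations for $\Map(\Sigma,K(M,n))$. First, the product decomposition gives a homeomorphism
\[
\Map(\Sigma,G_\omega) \cong \Map(\Sigma,K(B,1)) \times \Map(\Sigma,K(C,2)),
\]
which, together with the Kronecker/suspension identification $\pi_k\Map(\Sigma,K(M,n))\cong H^{n-k}(\Sigma;M)$, gives the bijection in (1).

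For (2), the group law on $\Map(\Sigma,G_\omega)$ is pointwise, $(y,x)\cdot (y',x') = (y+y',\,-\kappa(y,y')+x+x')$. On $\pi_0$, the twist $[-\kappa(y,y')]\in H^2(\Sigma;C)$ equals $-\omega_*([y]\smile[y']) = -\wt\omega(u,u')$, since $\kappa$ realizes $\omega$ on $\pi_2$ (\S\ref{subsec:explicit-group-model}). For $k\ge 1$ the twist contributes nothing: $(f,g)\mapsto -\wt\kappa(\pi f,\pi g)$ is bilinear in the H-space sense (vanishes when either input is constant), so it factors through $\Map(\Sigma,K(B,1))\sm\Map(\Sigma,K(B,1))$; applied to a pair of based maps $S^k \ra \Map(\Sigma,G_\omega)$, it further factors through the reduced diagonal $S^k \ra S^k\sm S^k$, which is null for $k\ge 1$. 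Hence the $\pi_k$ group law is the direct sum.

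For (3), a direct pointwise calculation (exactly as for $G_\omega$ itself in \S\ref{subsec:explicit-group-model}) shows that $[(y,x),(y',x')] = (0,\,\kappa(y',y)-\kappa(y,y'))$. Thus the commutator pairing $G_\omega\sm G_\omega \ra G_\omega$ factors through a bilinear map $\lambda\colon K(B,1)\sm K(B,1) \ra K(C,2)$, $\lambda(y,y')=\kappa(y',y)-\kappa(y,y')$, composed with $K(C,2)\hookrightarrow G_\omega$. On $\pi_2$, $\lambda$ realizes $-\omega_\sym$: the term $\kappa(y',y)$ contributes $-\omega(b',b)$ because of the Koszul swap of the two $\pi_1$ classes, while $-\kappa(y,y')$ contributes $-\omega(b,b')$. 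Applying $\Map(\Sigma,-)$, the Samelson product factors through $\lambda_*\colon \Map(\Sigma,K(B,1))\sm\Map(\Sigma,K(B,1))\ra \Map(\Sigma,K(C,2))$, which forces it to vanish in the first factor. To match the formula in the second factor, one uses the general principle that a bilinear map $K(M,n)\sm K(M',n') \ra K(M'',n+n')$ realizing $\alpha\colon M\otimes M'\ra M''$ induces, under the Kronecker identification, the cohomology operation $(-1)^{p(n'-q)}\wt\alpha$ on $\pi_p\otimes\pi_q$ of the corresponding mapping spaces; the sign is the Koszul shuffle sign arising from the factorization $\Sigma\sm S^{p+q}\xra{\Delta_\Sigma\sm \id}\Sigma\sm\Sigma\sm S^p\sm S^q \ra \Sigma\sm S^p\sm\Sigma\sm S^q$, which swaps the second $\Sigma$ (of cohomological degree $n'-q$) past $S^p$. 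Taking $(n,n')=(1,1)$ and $\alpha=-\omega_\sym$ produces the claimed $-(-1)^{p(1-q)}\wt{\omega_\sym}(y,y')$.

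The main obstacle is precisely this sign in (3): the nonsign content is a formal consequence of the space-level product decomposition and the bilinearity of $\kappa$, but the Koszul shuffle, combined with the swap sign in the computation of $\lambda$ on $\pi_2$, must be tracked carefully against the Samelson-product and Kronecker-pairing conventions set up in \S\ref{sec:conventions}. Once those conventions are fixed, however, the sign $-(-1)^{p(1-q)}$ is forced.
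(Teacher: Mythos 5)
Your proposal is correct, and for parts (1) and (2) it is essentially the paper's argument ($G_\omega=K(B,1)\times K(C,2)$ as a space, $-\kappa$ representing $-\wt\omega$ on $\pi_0$; your reduced-diagonal argument for $k\geq 1$ is a slightly more elaborate version of the paper's one-line observation). For part (3), however, you take a genuinely different route. The paper first establishes the Samelson product on $\pi_0$ directly as the group commutator read off from the $\pi_0$-group law (with the antisymmetrization step justified by \eqref{rem:bilin-op-symmetry}), and then transports the general $(p,q)$ case to $\pi_0$ using the injectivity of the suspension maps \eqref{prop:suspension-map-formula} and the pointwise-commuting square \eqref{prop:commutator-and-suspension}; the sign $(-1)^{p(1-q)}$ appears there from sliding $\epsilon_p$ past $y'\in H^{1-q}(\Sigma;B)$. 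You instead factor the commutator of $G_\omega$ (via \eqref{eq:g-commutator}) through a single bilinear pairing $\lambda\colon K(B,1)\sm K(B,1)\ra K(C,2)$ realizing $-\omega_\sym$, and then evaluate the induced operation on $\pi_p\times\pi_q$ of mapping spaces by the diagonal--shuffle factorization, where the same Koszul sign arises from moving the second copy of $\Sigma$ past $S^p$. The two computations are of course tracking the same two signs (the degree $-1$ of the swap on $S^1\sm S^1$, and the shuffle sign $(-1)^{p(1-q)}$), but your version avoids the $\pi_0$-reduction machinery entirely, at the cost of invoking a general principle about bilinear Eilenberg--Mac Lane pairings; that principle is correct as you state it, and the only point you leave implicit --- that $\lambda$ inducing $-\omega_\sym$ on $\pi_2$ of the smash forces it to represent the operation $-\wt{\omega_\sym}$ --- is immediate since maps $K(B,1)\sm K(B,1)\ra K(C,2)$ are classified up to homotopy by their effect on $\pi_2$. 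The paper's approach buys a computation that reuses the already-proved $\pi_0$ formula and stays entirely within the suspension formalism it needs later (e.g.\ for the desuspension map); yours is more self-contained at the level of classifying pairings and makes the origin of the sign arguably more transparent.
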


Of course,  $\pi_k\Map(\Sigma,G_\omega)\approx0$ for $k\geq 3$, and
the first factor is always $0$ in 
$\pi_2\Map(\Sigma,G_\omega)$.  In
low dimensions, the Samelson products take the form
\begin{align*}
\pairing{(u,v)}{(y,x)} &= (0,
                         -\wt{\omega_\sym}(u,y))=(0,-\wt{\omega_\sym}(y,u)),
& \pi_0\times\pi_1\ra \pi_0,
\\
\pairing{(y,x)}{(u,v)} &= (0, \wt{\omega_\sym}(y,u))
=(0,\wt{\omega_\sym}(u,y)), & \pi_1\times \pi_0\ra \pi_1,
\\
\pairing{(y,x)}{(y',x')} &= -\wt{\omega_\sym}(y,y') =\wt{\omega_\sym}(y',y),
& \pi_1\times \pi_1\ra \pi_2.
\end{align*}
The Samelson product $\pi_0\times \pi_2\ra \pi_2$ is trivial. 

\begin{rem}\label{rem:inversion-map-to-g}
We record here the inversion formula for $\pi_0\Map(\Sigma,G_\gamma)$:
\[
(u,v)^{-1} = (-u, -\wt\omega(u,u)-v).
\]
\end{rem}

\begin{rem}
\label{rem:sign-in-pi0-group-law}
Suppose $\Sigma=T$ is a torus so  that as in
\eqref{subsec:torus-conventions} we have 
identifications $L=H_1T$, and 
\[
H^1(\Sigma;B)\xra{\sim} \Hom(L, B),\qquad H^2(\Sigma;C)\xra{\sim}
\Hom(\Lambda_2L, C).
\]
Then 
\[
\pi_0 \Map(T, G_\omega) \approx \Hom(L,B)\times \Hom(\Lambda_2 L, C)
\]
with group law given by 
\[
(m,n)\cdot (m',n') = (m+m', \omega(m\otimes m')+n+n'),\qquad m,m'\in
\Hom(L,B), \; n,n'\in \Hom(\Lambda_2L,C),
\]
where $\omega(m\otimes m')$ represents the composite $\Lambda_2L
\hookrightarrow L\otimes L \xra{m\otimes m'} B\otimes B \xra{\omega}
C$.   The change sign in the formula is a consequence of
\eqref{prop:cup-product-sign}. 
\end{rem}

\begin{rem}
Let $X=B\Map(T,G_\omega)$.  Then we have $\pi_k
X=\pi_{k-1}\Map(T,G_\omega)$.
The identifications of \eqref{prop:map-into-g} give  
\[
\pi_1 X \approx \Hom(L,B)\times \Hom(\Lambda_2 L,C),\qquad
\pi_2 X \approx B\times \Hom(L,C), \qquad
\pi_3X \approx C,
\]
with group law on $\pi_1X=\pi_0\Map(T,G_\omega)$ given as above.  

Tracing through: the relation between Samelson products and Whitehead
products \eqref{subsec:samelson-products}, the expression of
$\pi_1\curvearrowright \pi_n$ in 
terms of Whitehead products \eqref{subsec:whitehead-products}, and the
relation between 
Whitehead products and the quadratic invariant \eqref{subsec:sc-3-types}, 
together with the identifications and formulas of
\eqref{prop:map-into-g}, we 
find that: 
\begin{itemize}
\item The action of $\pi_1 X$ on
  $\pi_2X$ is given by
\[
(m,n)\propto (y,x) = (y,x) + [(m,n),(y,x)]= (y,x) +
\pairing{(m,n)}{(y,x)} = (b, x-\beta(y,m)).
\]
\item The action of $\pi_1 X$ on  $\pi_3X$ is
  trivial.
\item The quadratic invariant $\phi^\sharp\colon \pi_2X\ra \pi_3X$ is
  given by
$\phi^\sharp(y,x)=\phi(y)$.  
\end{itemize}
\end{rem}

\begin{proof}[Proof of \eqref{prop:map-into-g}(1) and (2)]
The bijections of (1) are  immediate from the description of
$G_\omega$.  The formula for the group law
in (2) amounts to the fact that the topological cocycle $-\kappa \colon
K(B,1)\times K(B,1)\ra K(C,2)$
represents the cohomology operation $-\wt\omega$.  That the group
structure for $\pi_{*\geq 1}$ is the additive one is straightforward;
i.e., $\Omega G\approx K(B,0)\times K(C,1)$ as an $H$-space.  
\end{proof}

It remains to prove part (3) of \eqref{prop:map-into-g}, for which we
need a suspension map.

\subsection{The suspension map}

There is a natural \dfn{suspension map}
\[
S_k\colon \pi_p\Map(\Sigma, X) \ra \pi_{p-k}\Map(\Sigma\times
S^k, X) 
\]
induced by  the evident inclusion
\[
\Map_*(S^k,\Map(\Sigma,X))\subseteq
\Map(S^k,\Map(\Sigma,X))\approx \Map(\Sigma\times S^k, X). 
\]
We compute the effect of suspension for $X=G_\omega$ in terms of  the
isomorphisms 
\eqref{prop:map-into-g}(1).  
\begin{prop}\label{prop:suspension-map-formula}
When $X=G_\omega$, the suspension map $S_k$ is given by 
\[
S_k(x,y) = (x\times \epsilon_k, y\times \epsilon_k), \qquad x\in
H^{1-p}(\Sigma;B),\; y\in H^{2-p}(\Sigma;C),
\]
where $\epsilon_k\in H^kS^k$ is the canonical generator.  In
particular, the suspension map is injective.
\end{prop}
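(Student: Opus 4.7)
The plan is to reduce the claim to a statement about a single Eilenberg--MacLane factor. As a pointed space (ignoring the group law), $G_\omega$ is homeomorphic to $K(B,1)\times K(C,2)$, which gives a pointed homotopy equivalence $\Map(\Sigma, G_\omega)\simeq \Map(\Sigma, K(B,1))\times \Map(\Sigma, K(C,2))$. Both the identification of \eqref{prop:map-into-g}(1) and the suspension map are defined naturally in $G_\omega$ through its underlying pointed space, and so both respect this product splitting. Therefore it suffices to verify, for a single Eilenberg--MacLane space $Y=K(\pi,n)$, that the suspension map corresponds to cross product with $\epsilon_k$.

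For such a $Y$, the identification $\pi_p\Map(\Sigma, Y)\approx H^{n-p}(\Sigma;\pi)$ used in \eqref{prop:map-into-g}(1) is the standard one, obtained as the composite
\[
  [S^p,\Map(\Sigma,Y)]_* \;\cong\; [\Sigma_+\sm S^p,\, Y]_* \;\cong\; \wt H^n(\Sigma_+\sm S^p;\pi)\;\cong\; H^{n-p}(\Sigma;\pi),
\]
where the first isomorphism is the exponential adjunction (with the basepoint of $\Map(\Sigma,Y)$ being the constant map at the basepoint of $Y$), the second is Brown representability, and the third is desuspension (equivalently, cross product with the generator $\epsilon_p\in \wt H^p S^p$ inverted).

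Next I will unwind the suspension map under these adjunctions. By construction $S_k$ is induced by the inclusion $\Omega^k\Map(\Sigma,Y)=\Map_*(S^k,\Map(\Sigma,Y))\hookrightarrow \Map(S^k,\Map(\Sigma,Y))\approx \Map(\Sigma\times S^k, Y)$. Under the adjunctions above, a class $f\colon \Sigma_+\sm S^p\to Y$ in $\pi_p\Map(\Sigma,Y)$ is sent to its adjoint composite
\[
  (\Sigma\times S^k)_+\sm S^{p-k} \;\approx\; \Sigma_+\sm S^k\sm S^{p-k}\;\approx\; \Sigma_+\sm S^p \xra{f} Y,
\]
and this pulls the fundamental class of $Y$ back to $x\times \epsilon_k\in H^{n-p+k}(\Sigma\times S^k;\pi)$ when $x\in H^{n-p}(\Sigma;\pi)$ is the class represented by $f$. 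Thus $S_k(x,y)=(x\times \epsilon_k,\,y\times \epsilon_k)$ as claimed. Injectivity is then immediate from the Künneth formula, since cross product with the generator of $\wt H^k S^k$ is a split injection.

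The proof is essentially a bookkeeping exercise in adjunctions; the only real point to watch is that the adjunction $[S^p,\Map(\Sigma,Y)]_*\cong [\Sigma_+\sm S^p,Y]_*$ uses the \emph{unbased} space $\Sigma$ on the mapping space side (so that the smash is with $\Sigma_+$, not $\Sigma$), and that the associativity isomorphism $\Sigma_+\sm S^k\sm S^{p-k}\approx \Sigma_+\sm S^p$ is compatible (up to the standard sign) with the Künneth isomorphism used to rewrite $H^*(\Sigma\times S^k;\pi)$. These sign and basepoint issues are the main place to be careful, but since only the identification of $S_k(x,y)$ with a single Künneth component is asserted, no subtle sign enters the statement.
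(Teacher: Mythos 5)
Your proposal is correct and follows essentially the same route as the paper: the paper's proof simply notes that $G_\omega=K(B,1)\times K(C,2)$ as a pointed space, so $S_k$ is the cohomological suspension, which by the stated conventions coincides with $-\times\epsilon_k$. Your write-up just makes the adjunction bookkeeping and the Künneth/injectivity step explicit, which the paper leaves implicit.
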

\begin{proof}
Using $G_\omega=K(B,1)\times K(C,2)$, we see that $S_k$ is the same as the
suspension map in cohomology.  (See \eqref{subsec:cup-product}.)
\end{proof}

\subsection{Computation of  Samelson products, and proof of
  \eqref{prop:map-into-g}(3)}

When $p=q=0$, the Samelson product is just the commutator on
$\pi_0\Map(\Sigma, G_\omega)$.  Thus, in terms of \eqref{prop:map-into-g}(1), 
\[
\pairing{(y,x)}{(y',x')} = (0,-\wt\omega(y,y')+\wt\omega(y',y)) =
(0,-\wt{\omega_\sym}(y,y')). 
\]
The last equality is because 
$\wt{\gamma_\sym}\colon H^1(\Sigma;B)\times H^1(\Sigma;B)\ra
H^2(\Sigma;C)$ is the \emph{antisymmetrization} of $\wt\gamma$, for
degree reasons \eqref{rem:bilin-op-symmetry}. 

\begin{prop}\label{prop:commutator-and-suspension}
Let $G$ be a topological group.  
There is a commutative diagram of the form
\[\xymatrix{
{\pi_p\Map(\Sigma,G)\times \pi_q\Map(\Sigma,G)}
\ar[r]^-{\pairing{-}{-}} \ar[d]_{S_p\times S_q}
& {\pi_{p+q}\Map(\Sigma,G)} \ar[d]^{S_{p+q}} 
\\
{\pi_0\Map(\Sigma\times S^p,G)\times \pi_0\Map(\Sigma\times S^q,G)}
\ar[d]_{\pi_p^*\times\pi_q^*} 
& {\pi_0\Map(\Sigma\times S^{p+q}, G)} \ar[d]^{\pi_{p,q}^*}
\\
{\pi_0\Map(\Sigma\times S^p\times S^q,G)\times \pi_0\Map(\Sigma\times 
  S^p\times S^q,G)}\ar[r]_-{\pairing{-}{-}}
& {\pi_0\Map(\Sigma\times S^p\times S^q,G)}
}\]
where $\pi_p\colon S^p\times S^q\ra S^p$, $\pi_q\colon S^p\times
S^q\ra S^q$, and $\pi_{p,q}\colon S^p\times S^q\ra S^p\sm S^q\approx
S^{p+q}$ are the evident projections.
\end{prop}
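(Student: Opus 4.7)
The plan is to show that both legs of the diagram produce the same representative map $\Sigma \times S^p \times S^q \to G$, namely the pointwise commutator. The proposition is essentially a formal consequence of the exponential law and the naturality of Samelson products; the work lies entirely in bookkeeping of basepoints.

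First, I would use the exponential-law homeomorphism $\Map(S^k,\Map(\Sigma,G)) \approx \Map(\Sigma\times S^k, G)$ to identify $\pi_0\Map(\Sigma\times S^k,G)$ with unpointed homotopy classes $[S^k,\Map(\Sigma,G)]$. Under this identification, the suspension map $S_k$ is nothing but the forgetful map from pointed to unpointed homotopy classes; equivalently, if $f\colon S^p\to \Map(\Sigma,G)$ is a pointed representative of a class in $\pi_p\Map(\Sigma,G)$, then $S_p(f)$ is represented by its adjoint $\hat f\colon \Sigma\times S^p\to G$, now regarded as an unpointed class.

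Next, trace both routes. Along the bottom path: applying $S_p\times S_q$ and then pulling back along $\pi_p$ and $\pi_q$ produces the two maps $\Sigma\times S^p\times S^q\to G$ given by $(\sigma,s,t)\mapsto \hat f(\sigma,s)$ and $(\sigma,s,t)\mapsto \hat g(\sigma,t)$. Their Samelson product in $\pi_0\Map(\Sigma\times S^p\times S^q,G)$ is simply their pointwise commutator in the target group $G$, yielding
\[
(\sigma,s,t)\;\longmapsto\; \hat f(\sigma,s)\,\hat g(\sigma,t)\,\hat f(\sigma,s)^{-1}\,\hat g(\sigma,t)^{-1}.
\]
Along the top path: by definition, $\pairing{f}{g}\in \pi_{p+q}\Map(\Sigma,G)$ is represented by the pointed map $S^p\wedge S^q\to \Map(\Sigma,G)$ obtained by factoring $(s,t)\mapsto [f(s),g(t)]$ through the smash, and since commutators in $\Map(\Sigma,G)$ are computed pointwise, its exponential adjoint is $(\sigma,[s,t])\mapsto [\hat f(\sigma,s),\hat g(\sigma,t)]$. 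Pulling back along $\pi_{p,q}\colon S^p\times S^q\twoheadrightarrow S^p\wedge S^q$ gives precisely the same formula as the bottom path.

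The only subtlety is verifying that $(s,t)\mapsto [f(s),g(t)]$ really does factor through the smash, i.e., collapses $S^p\vee S^q$ to the identity of $\Map(\Sigma,G)$; this is immediate from the fact that $f$ and $g$ are pointed (so $f$ at the basepoint of $S^p$ is the constant map to $e\in G$, and similarly for $g$). Once this is checked, commutativity of the diagram reduces to equality of the two explicit formulas above, so no genuine obstacle arises.
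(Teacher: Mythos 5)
Your proof is correct and is essentially the paper's argument: the paper disposes of this proposition by noting that the underlying point-set diagram commutes, and your write-up simply makes that explicit by unwinding the exponential adjunction, identifying $S_k$ with the passage from pointed to free homotopy classes, and checking that both composites are represented by the same pointwise commutator $(\sigma,s,t)\mapsto[\hat f(\sigma,s),\hat g(\sigma,t)]$.
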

\begin{proof}
The underlying point-set diagram commutes.
\end{proof}

\begin{proof}[Proof of \eqref{prop:map-into-g}(3)]
We have already proved the commutator formula for $\pi_0$ above.
Inserting  $(y,x)\in \pi_p\Map(\Sigma,G)$ and  $(y',x')\in
\pi_q\Map(\Sigma,G)$, going around the upper/right side of the square
of \eqref{prop:commutator-and-suspension} gives
\[
 \pairing{(y,x)}{(y',x')} \times \epsilon_p\times \epsilon_q,
\]
while going around the left/lower side gives
\begin{align*}
  (0,-\wt{\omega_\sym}(y\times \epsilon_p\times 1, y'\times1 \times\epsilon_q))
&= (0,-(-1)^{p(1-q)}\wt{\omega_\sym}(y,y') \times \epsilon_p\times \epsilon_q),
\end{align*}
since $y'\in H^{1-q}(\Sigma;B)$, using the formula for $\pi_0$.
Therefore we arrive at the formula  
\[
\pairing{(y,x)}{(y',x')} = (0,-(-1)^{p(1-q)}\wt{\omega_\sym}(y,y')),\qquad
(y,x)\in \pi_p\Map(\Sigma,G), \; (y',x')\in \pi_q\Map(\Sigma, G).
\]
\end{proof}

\subsection{A desuspension map}

Consider the basepoint preserving map
\[
D\colon \Map( \Sigma\times S^1, G)\ra \Omega\Map(\Sigma, G)
\]
defined by 
\[
(Df)(t)(s):= f(s,t)\cdot f(s,*)^{-1}, \qquad t\in S^1,\; s\in \Sigma,
\]
using the group law of $G$; here $*\in S^1$ represents the
basepoint.  This induces a map on homotopy 
groups
\[
D_*\colon \pi_k\Map(\Sigma\times S^1, G) \xra{\pi_k D} \pi_k\Omega
\Map(\Sigma, G) \xra[\sim]{\nu} \pi_{k+1}\Map(\Sigma, G).
\]
\begin{prop}\label{prop:desuspension-formula}
For $k=0,1$ the  map $D_*\colon \pi_k\Map(\Sigma\times S^1, G_\omega)\ra
\pi_{k+1}\Map(\Sigma, G_\omega)$ is given by
\[
D_*(y\times 1 + y'\times\epsilon,\; x\times 1+ x'\times\epsilon) =
(y',\; -\wt\omega(y',y)+x'),
\]
where $y\times 1+y'\times \epsilon\in H^{1-k}(\Sigma\times S^1;B)$,
$x\times 1+x'\times \epsilon\in 
H^{2-k}(\Sigma\times S^1;C)$, and where $\epsilon \in
H^1S^1$ is such that $(\epsilon, [S^1])=1$. 

In this case $k=1$ we must have  $y'=0$ and this simplifies to 
\[
D_*(y\times 1,\; x\times 1+x'\times \epsilon) = x'.
\]
\end{prop}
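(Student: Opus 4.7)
My plan is first to compute $Df$ explicitly using the group structure of $G_\omega$, and then to identify each term's image under the composite $\pi_k\Map(\Sigma\times S^1, G_\omega)\to \pi_k\Omega\Map(\Sigma, G_\omega)\xra{\nu}\pi_{k+1}\Map(\Sigma, G_\omega)$ via the cohomological identifications of \eqref{prop:map-into-g}.  Writing $f = (F, G)$ with $F\colon \Sigma\times S^1\to K(B,1)$ and $G\colon \Sigma\times S^1\to K(C,2)$, a direct calculation using the inversion formula \eqref{rem:inversion-map-to-g} and the group law of $G_\omega$ (exploiting bilinearity of $\kappa$ to simplify the second coordinate) gives
\[
Df(t)(s) = \bigl(F(s,t) - F(s,*),\; \kappa(F(s,t) - F(s,*),\, F(s,*)) + G(s,t) - G(s,*)\bigr).
\]
This decomposes $Df$ into ``abelian'' parts (the subtractions) and a bilinear correction term involving $\kappa$.

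For the abelian pieces, the map $t\mapsto F(s,t) - F(s,*)$ is a based loop in $\Map(\Sigma, K(B,1))$ whose cohomology class in $H^{1-k}(\Sigma\times S^1; B)$ equals $\alpha - p^*\alpha = y'\times \epsilon$, where $p\colon \Sigma\times S^1\to \Sigma\times S^1$ is the retraction $(s,t)\mapsto (s,*)$.  Via the loop-space adjunction and \eqref{prop:map-into-g}(1), the corresponding class in $\pi_{k+1}\Map(\Sigma, K(B,1))$ is $y'$.  An identical argument shows that $G(s,t) - G(s,*)$ contributes $x'$ to the second coordinate of $D_*(\alpha,\beta)$.

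The remaining step is to identify the class of the nonlinear correction $\kappa(F(s,t) - F(s,*), F(s,*))$.  Since $\kappa$ represents $\wt\omega$ by construction (\S\ref{subsec:explicit-group-model}), this map represents
\[
\wt\omega([F(s,t) - F(s,*)], [F(s,*)]) = \wt\omega(y'\times\epsilon,\; y\times 1) \in H^{2-k}(\Sigma\times S^1; C).
\]
The graded formula $(a\times b)\smile(c\times d) = (-1)^{|b||c|}(a\smile c)\times(b\smile d)$ for cross products yields $(y'\times\epsilon)\smile(y\times 1) = (-1)^{1-k}(y'\smile y)\times\epsilon$; after applying $\omega_*$ and extracting the $\epsilon$-component, this term contributes $(-1)^{1-k}\wt\omega(y',y)$ to the second coordinate.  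For $k = 0$ this equals $-\wt\omega(y',y)$; for $k=1$ we have $y'\in H^{-1}(\Sigma;B) = 0$, so the correction vanishes.  Summing the contributions produces both of the claimed formulas.

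The main obstacle I anticipate is precisely this Koszul sign: one must align the grading conventions used for the cross product with those for the loop/suspension adjunction $\nu$ to confirm that the $(-1)^{1-k}$ appears with the correct sign in each case.  Once the sign is settled, the remainder reduces to cohomological identifications already in hand from \eqref{prop:map-into-g} and \eqref{prop:suspension-map-formula}.
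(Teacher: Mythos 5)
Your proposal is correct and follows essentially the same route as the paper: both amount to computing the class of $(s,t)\mapsto f(s,t)\cdot f(s,*)^{-1}$ (i.e.\ the composite $S_1\circ D$), reading off $D_*$ via the injectivity and formula of the suspension map \eqref{prop:suspension-map-formula}, with the crucial sign coming from $(y'\times\epsilon)\smile(y\times1)=-(y'\smile y)\times\epsilon$. The only difference is organizational: you recompute the product and inverse pointwise in $G_\omega$ using $\kappa$ and its bilinearity, whereas the paper performs the same manipulation directly in $\pi_0\Map(\Sigma\times S^1,G_\omega)$ using the group law \eqref{prop:map-into-g}(2) and the inversion formula \eqref{rem:inversion-map-to-g}.
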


\begin{proof}
Consider the composite  $S_1\circ D$ with the suspension map, which is
the self-map of 
$\Map(\Sigma\times S^1,G)$ which sends $f$ to  $(s,t)\mapsto
f(s,t)\cdot f(s,*)^{-1}$.  

For $k=0$, the effect of
$S_1\circ D$ on $(y+y'\times\epsilon,
x+x'\times\epsilon)$ (we omit ``$\times 1$'' from the notation), using
the formula for the group law on 
$\pi_0$, including the formula \eqref{rem:inversion-map-to-g} for
inversion,  is
\begin{align*}
(y+y'\times \epsilon, x+x'\times\epsilon)\cdot
  (y,x)^{-1}
&= (y+y'\times \epsilon, x+x'\times\epsilon)\cdot
  (-y,-\wt\omega(y,y)-x)
\\
&= (y'\times\epsilon, \wt\gamma(y,y) -\wt\omega(y+y'\times\epsilon,
  -y) +x'\times\epsilon)
\\
&= (y'\times\epsilon, \bigl(-\wt\omega(y',y) +x')\times
  \epsilon). 
\end{align*}
Note that $\wt\omega(y'\times\epsilon,
y)=\omega_*((y'\times\epsilon)\smile y) = -\omega_*(y'\smile y)\times
\epsilon = -\wt\omega(y', y)\times \epsilon$, since $y\in H^1$.  
We read off the formula we need using
\eqref{prop:suspension-map-formula}. 

For $k=1$, the calculation of $S_1\circ D$ on $(y,
x+x'\times\epsilon)$ is as expected:
\[
  (y, x+x'\times \epsilon)\cdot(y,x)^{-1} = (y,x+x'\times\epsilon)-(y,x)=
(0,x'\times\epsilon),
\]
whence the desired formula.
\end{proof}

\section{Computation of $\pi_*W^T(G_\omega)$ and proof of
  \eqref{thm:general-thm}} 
\label{sec:proof-of-theorem}

\subsection{Certain Samelson products in $\pi_*W^T(G_\omega)$}

For a based map $v\colon S^1\ra T$, define
\[
C_v\colon T\times S^1\ra T,\qquad C_v(t,s) = v(s)^{-1}\cdot t = t\cdot v(s)^{-1}.
\]
That is, $C_v$ is the composite 
\[
T\times S^1 \xra{\id\times v} T\times T\xra{\id\times\inv}
T\times T \xra{\mult} T.  
\]
\begin{lemma}
\label{lemma:cv-formula}
The induced map $C^*_v\colon H^k(T;M)\ra H^k(T\times S^1;M)$ is given
by 
\[
C^*_v(f) = f\times 1 - (f\nabla v)\times \epsilon,
\]
where we also write $v$ for $v_*[S^1]\in H_1T$.  
\end{lemma}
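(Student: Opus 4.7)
The plan is to compute $C_v^*$ directly from the given factorization of $C_v$, tracking each step on cohomology and then reassembling by Künneth bidegree. By definition,
\[
C_v^* \;=\; (\id\times v)^* \circ (\id\times \inv)^* \circ \mult^*,
\]
so I first expand $\mult^*(f)$ using the identification $H^*(T\times T;M) \approx H^*(T;M)\otimes H^*T$, writing $\mult^*(f) = \sum f'\otimes f''$ (this is the coproduct-like expression called $\mult_*f$ in \eqref{subsec:torus-conventions}).

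Next, because $T$ is an abelian topological group, $H^*T$ is an exterior algebra on $H^1T$, and $\inv^*$ is the algebra map acting as $-1$ on $H^1T$; hence $\inv^*$ acts by $(-1)^{|f''|}$ on each $H^{|f''|}T$ summand. Applying $(\id\times\inv)^*$ followed by $(\id\times v)^*$ then gives
\[
C_v^*(f) \;=\; \sum (-1)^{|f''|}\, f'\otimes v^*(f''),
\]
and since $H^*S^1$ is concentrated in degrees $0$ and $1$, only summands with $|f''|\in\{0,1\}$ survive.

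The $|f''|=0$ contribution is the $(k,0)$-component of $\mult^*(f)$, which equals $f\otimes 1$ because $\mult$ restricted to $T\times\{e\}$ is the identity; this yields the term $f\times 1$. For $|f''|=1$, the composition $v^*$ sends a class $f''\in H^1T$ to $(f'',v)\,\epsilon$ by definition of the Kronecker pairing and the convention $v=v_*[S^1]$; combined with the sign $-1$ coming from $\inv^*$, the total contribution of this bidegree is
\[
-\sum f'\,(f'',v)\times \epsilon \;=\; -(f\nabla v)\times \epsilon,
\]
where the last equality is precisely the definition $f\nabla v=\sum f'(f'',v)$ of the slant product recalled in \eqref{subsec:torus-conventions}. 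Adding the two contributions produces the claimed formula.

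The only delicate point is keeping track of the sign produced by $\inv^*$ on degree-one classes; this is exactly what produces the minus sign in front of the slant product term. Everything else is a mechanical unwinding of definitions, so no further input is needed.
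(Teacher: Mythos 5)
Your proof is correct and is essentially the paper's own argument: factor $C_v$ as $\mult\circ(\id\times\inv)\circ(\id\times v)$, expand $\mult^*f=\sum f'\otimes f''$, insert the sign $(-1)^{\len{f''}}$ from inversion, and evaluate against $v$ to recognize the slant product $f\nabla v$ in the degree-one component. The only difference is cosmetic: you spell out why $\inv^*$ acts by $(-1)^{\len{f''}}$ (exterior algebra on $H^1T$) and treat the degree-zero term via restriction to $T\times\{e\}$, where the paper writes it as $f\nabla[*]\times 1$.
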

\begin{proof}
If $\mult^*f = \sum f'\otimes f''$, then
\[
(\id \times \inv)^*\mult^* f = \sum(-1)^{\len{f''}} f'\otimes f''.
\]
Since $v^*f=(f,v_*[*])\times1 +(f,v_*[S^1])\times \epsilon$, we have
\[
(\id\times v\inv)^*\mult^* f = \sum(-1)^{\len{f''}} f'(f'',[*])\times
1 + (-1)^{\len{f''}}f'(f'',v)\times \epsilon = f\nabla[*]\times 1 -
f\nabla v\times \epsilon.
\]
\end{proof}

Given elements $(1,f), (g,1) \in \Map(T,G)\rtimes T \subset W^T(G)$,
their commutator is given by 
\[
(1,f)\cdot (g,1)\cdot (1,f^{-1})\cdot (g^{-1},1) 
= (1, t\mapsto g(f^{-1}\cdot t)\cdot g^{-1}(t)).  
\]
\begin{lemma}\label{lemma:samelson-t-with-map-g}
For $v\in \pi_1T$ and $w\in \pi_k\Map(T,G)$ we have  the Samelson product
\[
\pairing{(0,v)}{(w,0)} = \bigl((D\circ C^*_v)_*(w),0\bigr) \in
\pi_{k+1} \Map(T,G)\rtimes T = \pi_{k+1}\Map(T,G)\times \pi_{k+1}T,
\]
where $D\colon \Map(T\times S^1,G)\ra \Omega\Map(T,G)$ is the desuspension
map, and $C_v^*=\Map(C_v,\id)\colon \Map(T,G)\ra \Map(T\times S^1,G)$.  
\end{lemma}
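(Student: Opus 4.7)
The plan is to unfold the definition of the Samelson product in $W^T(G_\omega)$ in terms of the commutator, and then recognize the resulting family of maps as $D\circ C_v^*$ applied to $w$. Recall that $\pairing{(0,v)}{(w,0)}$ is, by definition, the homotopy class of the commutator map $S^1\sm S^k\ra W^T(G_\omega)$ obtained by smashing representatives of $(0,v)$ and $(w,0)$ against each other and applying the group-theoretic commutator in $W^T(G_\omega)$.

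First I would plug $(1,v(s))$ and $(g_x,1)$ (for $s\in S^1$, $x\in S^k$, with $g_x:=w(x)\in \Map(T,G_\omega)$) into the commutator formula displayed just before the lemma statement. Using the group law on $\Map(T,G_\omega)\rtimes D(T)$ together with the fact that the inclusion $T\subset D(T)$ sends $v(s)$ to translation by $v(s)$, a direct expansion gives
\[
\bigl[(1,v(s)),\,(g_x,1)\bigr] \;=\; \bigl(t\mapsto g_x(v(s)^{-1}\cdot t)\cdot g_x(t)^{-1},\;1\bigr),
\]
whose $D(T)$-component is trivial. This already establishes that the $\pi_{k+1}T$-coordinate of $\pairing{(0,v)}{(w,0)}$ vanishes, so only the $\pi_{k+1}\Map(T,G_\omega)$-coordinate remains to be identified.

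Second, I would match the remaining $\Map(T,G_\omega)$-valued two-variable family with $D\circ C_v^*$ applied to $w$. By the definition of $C_v$, we have $(C_v^*g_x)(t,s) = g_x(v(s)^{-1}\cdot t)$; then, applying $D$ and using that $v$ is based so $v(*)=e\in T$,
\[
\bigl(D(C_v^*g_x)\bigr)(s)(t) \;=\; g_x(v(s)^{-1}t)\cdot g_x(v(*)^{-1}t)^{-1} \;=\; g_x(v(s)^{-1}t)\cdot g_x(t)^{-1},
\]
which coincides with the first coordinate of the commutator above. Translating through the loop-space adjunction $\pi_k\Omega\Map(T,G_\omega)\xra{\sim}\pi_{k+1}\Map(T,G_\omega)$ then yields the claimed formula.

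The main obstacle is purely bookkeeping of conventions: one must ensure that the ordering of factors in $S^1\sm S^k$ used to define $\pairing{-}{-}$, the sign in the loop-space isomorphism $\nu$, and the ordering conventions for the semidirect product $\Map(T,G_\omega)\rtimes D(T)$ all line up, so that no spurious sign or transposition appears. All of these conventions are fixed in \S\ref{sec:conventions}, so once they are invoked the identification is direct, and no homotopical input beyond point-set unwinding of definitions is needed.
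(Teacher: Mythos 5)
Your proposal is correct and is essentially the paper's own argument: the paper likewise observes that $D\circ C_v^*$ sends $g$ to the loop $s\mapsto v(s)\cdot g\cdot v(s)^{-1}\cdot g^{-1}$, i.e.\ the commutator in $\Map(T,G)\rtimes T$, so that $D\circ C_v^*\circ w$ is adjoint to the Samelson product; you merely run the same point-set identification in the opposite direction, starting from the commutator formula. The only difference is presentational, so nothing further is needed.
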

\begin{proof}
Evidently $D\circ C_v^*$ sends $g\in \Map(T,G)$ to $h\in
\Omega\Map(T,G)$ defined by $h(s):=\bigl(t\mapsto g(v(s)^{-1}\cdot
t)\cdot g(t)^{-1}\bigr)$.  That is, $h(s)=v(s)\cdot g\cdot
v(s)^{-1}\cdot g^{-1}$, the commutator in $\Map(T,G)\rtimes T$ of
$v(s)\in T$ and $g\in \Map(T,G)$.  Thus for a based map $w\colon
S^k\ra \Map(T,G)$, the composite $D\circ C_v^*\circ w$ is adjoint to
the desired Samelson product.
\end{proof}

Now fix $G=G_\omega$.  We compute the Samelson product
$\pairing{-}{-}\colon \pi_1 W^T(G_\omega)\times \pi_k W^T(G_\omega)\ra
\pi_{k+1}W^T(G_\omega)$ on elements which come from the subgroups $T$
and $\Map(T,G)$.  
\begin{prop}\label{prop:samelson-t-with-map-g}
For $t\in L=\pi_1T\subset \pi_1D(T)$ and $(y,x)\in H^{1-k}(T;B)\times
H^{2-k}(T;C) = \pi_k\Map(T,G_\omega) \subset \pi_1W^T(G_\omega)$, then
for $k=0,1$ we
have 
\[
\pairing{(0,t)}{((y,x),0)} = (-y\nabla t,\; \wt\omega(y\nabla t, y) -
x\nabla t) \in \pi_{k+1}\Map(T,G_\omega) \subset \pi_{k+1} W^T(G_\omega).  
\]
For $k=1$ this simplifies to 
\[
\pairing{(0,t)}{((y,x),0)}  = (0, -x\nabla t).
\]
\end{prop}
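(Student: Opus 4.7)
The plan is to apply Lemma \ref{lemma:samelson-t-with-map-g}, which identifies $\pairing{(0,t)}{((y,x),0)}$ with $((D \circ C_v^*)_*(y,x),\, 0)$, where $v\colon S^1 \to T$ is a based map representing $t \in L = \pi_1 T$. The task thus reduces to unwinding the composite $D_* \circ C_v^*$ on the cohomology-theoretic model of $\pi_k \Map(T, G_\omega)$ supplied by Proposition \ref{prop:map-into-g}(1).

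For the first step, I observe that the identification $\pi_k \Map(-, G_\omega) \approx H^{1-k}(-;B) \times H^{2-k}(-;C)$ rests on the splitting of $G_\omega$ as a product of Eilenberg-MacLane spaces at the \emph{space} level; the $\omega$-twisting of the group law is invisible to the underlying homotopy type and hence to this identification. Since $C_v^* = \Map(C_v, \id_{G_\omega})$ is simply pre-composition with $C_v$ and does not interact with the group structure of $G_\omega$, its effect on $\pi_k$ is the componentwise cohomology pullback along $C_v$. Applying Lemma \ref{lemma:cv-formula} to each factor therefore yields
\[
C_v^*(y,x) = \bigl( y \times 1 - (y \nabla t) \times \epsilon,\; x \times 1 - (x \nabla t) \times \epsilon \bigr),
\]
with the slant products $y \nabla t$, $x \nabla t$ as in \S\ref{subsec:torus-conventions}.

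For the second step, I apply Proposition \ref{prop:desuspension-formula} with the substitutions $y' = -y \nabla t$ and $x' = -x \nabla t$. In the case $k = 0$ this gives
\[
D_*(C_v^*(y,x)) = \bigl( -y \nabla t,\; -\wt\omega(-y \nabla t, y) + (-x \nabla t) \bigr) = \bigl( -y \nabla t,\; \wt\omega(y \nabla t, y) - x \nabla t \bigr),
\]
which is the asserted formula. In the case $k = 1$ the class $y \in H^0(T;B)$ is constant, so $y \nabla t \in H^{-1}(T;B) = 0$, and the simplified $k = 1$ version of Proposition \ref{prop:desuspension-formula} returns the second component $-x \nabla t$.

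The argument is essentially an orchestration of the two lemmas with the desuspension formula; I do not anticipate a substantive obstacle. The only point worth verifying carefully is that $C_v^*$ acts diagonally with respect to the splitting used to model $\pi_k \Map(T, G_\omega)$, so that Lemma \ref{lemma:cv-formula} may be applied termwise. This is automatic because $C_v$ is a map of the source $T$ and the target $G_\omega$ is left untouched, so the potential twisting in $G_\omega$ plays no role in this computation.
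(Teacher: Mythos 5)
Your proposal is correct and follows exactly the paper's own route: apply \eqref{lemma:samelson-t-with-map-g}, compute $C_v^*$ componentwise via \eqref{lemma:cv-formula}, and then feed the result into the desuspension formula \eqref{prop:desuspension-formula}, with the signs working out as you state. Your extra remark that $C_v^*$ acts diagonally on the product splitting (since $G_\omega=K(B,1)\times K(C,2)$ as a space and $C_v$ only touches the source) is a point the paper uses implicitly without comment.
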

\begin{proof}
Combine \eqref{lemma:samelson-t-with-map-g}, the formula 
\eqref{prop:desuspension-formula} for the desuspension map, and the effect of $C_t^*$ on
$\pi_*\Map(T,G_\omega)$ (for  $t\in L=\pi_1T$),
which is computed by 
\eqref{lemma:cv-formula}.   Explicitly, in terms of the isomorphism
$\pi_k\Map(\Sigma, G_\omega)\approx H^{1-k}(\Sigma;B)\times
H^{2-k}(\Sigma; C)$ of \eqref{prop:map-into-g}, the formula of
\eqref{lemma:cv-formula} gives
\[
C_t^*(y,x) = (y\times 1 - (y\nabla t)\times \epsilon, x\times
1-(x\nabla t)\times \epsilon),
\]
whence   \eqref{prop:desuspension-formula} gives
\[
D(C_t^*(y,x)) = (-y\nabla t,\;-\wt\omega(-y\nabla t, y) -x\nabla
t)=(-y\nabla t,\; \wt\omega(y\nabla t,y)-x\nabla t).
\]
\end{proof}

\subsection{Structure of $\pi_* W^T(G_\omega)$}
\label{subsec:proof-general-thm}

We now put this together to compute the homotopy groups of
$\Map(T,G_\omega)\rtimes T\subset W^T(G_\omega)$ together with 
its Samelson products.
\begin{thm}\label{thm:homotopy-K-with-action}
Fix $\omega\colon B\otimes B\ra C$, and set $L=H_1T=\pi_1T$.  
\begin{enumerate}
\item
We have set bijections
\begin{align*}
  \pi_2(\Map(T,G_\omega)\rtimes T) &\approx \qquad\qquad\qquad\quad\quad\; H^0(T;C),
\\
  \pi_1(\Map(T,G_\omega)\rtimes T) &\approx 
H_1T \times H^0(T;B)\times  H^1(T;C),
\\
  \pi_0(\Map(T,G_\omega)\rtimes T)&  \approx \quad\quad\;\;\;\;\, H^1(T;B)\times 
     H^2(T;C).
\end{align*}
\item
The group structure on $\pi_1$ and $\pi_2$ is
the evident additive one, while the group law in $\pi_0$ is given by 
\[
(m,n)\cdot (m',n') = (m+m', -\wt\omega(m,m')+n+n').
\]
\item 
Samelson products on $\pi_*$ are  given in terms of the above
bijections by 
\begin{align}
\pairing{ (m,n)}{(t,y,x)} &= (0,\; m\nabla t,\;
-\wt{\beta}(m,y)-\wt\omega(m\nabla t, m) + n\nabla t), 
&\pi_0\times \pi_1\ra \pi_1,
\\
\pairing{(t,y,x)}{(m,n)} &= (0,\; -m\nabla t,\;
\wt{\beta}(m,y)+\wt\omega(m\nabla t, m) - n\nabla t),
& \pi_1\times \pi_0\ra \pi_1,
\\
\pairing{(t,y,x)}{(t',y',x')} &= -\wt{\beta}(y,y')-x\Delta t'
  -x'\Delta t,
& \pi_1\times \pi_1\ra \pi_2,
\end{align}
while $\pi_0\times \pi_2\ra \pi_2$ is trivial.
\end{enumerate}
\end{thm}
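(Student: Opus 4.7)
The plan is to exploit the fact that, as a topological space, $\Map(T,G_\omega) \rtimes T$ is simply the product $\Map(T,G_\omega) \times T$, while the two inclusions $\iota_1\colon \Map(T,G_\omega) \hookrightarrow \Map(T,G_\omega) \rtimes T$ and $\iota_2\colon T \hookrightarrow \Map(T,G_\omega) \rtimes T$ are group homomorphisms, hence compatible with Samelson products and with group laws on $\pi_*$. Once this is set up, the theorem reduces to routine assembly of ingredients already established.

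First, the set bijections of (1) follow from $\pi_k(\Map(T,G_\omega) \rtimes T) \approx \pi_k \Map(T,G_\omega) \times \pi_k T$, together with Proposition \ref{prop:map-into-g}(1) identifying $\pi_k\Map(T,G_\omega)$ with $H^{1-k}(T;B)\times H^{2-k}(T;C)$, and the standard fact that $\pi_1 T \approx L$ while $\pi_k T = 0$ for $k\neq 1$.  Then for the group structure (2), the groups $\pi_1$ and $\pi_2$ of the topological group $\Map(T,G_\omega)\rtimes T$ are abelian, so the group law is the additive one; and since $\pi_0 T = 0$, the group law on $\pi_0(\Map(T,G_\omega)\rtimes T)$ reduces to that on $\pi_0\Map(T,G_\omega)$, which is \eqref{prop:map-into-g}(2).

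For the Samelson products (3), I will use the bilinearity of the Samelson pairing on $\pi_*$ of a topological group to expand
\[
\pairing{\iota_1 a + \iota_2 v}{\iota_1 a' + \iota_2 v'} = \iota_1 \pairing{a}{a'} + \pairing{\iota_1 a}{\iota_2 v'} + \pairing{\iota_2 v}{\iota_1 a'} + \iota_2 \pairing{v}{v'}.
\]
The first term is computed by Proposition \ref{prop:map-into-g}(3), and the last vanishes since $T$ is an abelian topological group.  Each of the two mixed terms either is directly an instance of Proposition \ref{prop:samelson-t-with-map-g}, or can be obtained from such an instance by the graded commutativity identity $\pairing{a}{b} = -(-1)^{|a|\,|b|}\pairing{b}{a}$.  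Adding the four contributions gives the three displayed formulas.  The trivial pairing $\pi_0 \times \pi_2 \ra \pi_2$ is similarly immediate, combining the corresponding vanishing statement in \eqref{prop:map-into-g}(3) with $\pi_2 T = 0$.

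The main potential pitfall is consistent sign-tracking: the sign in \eqref{prop:map-into-g}(3) depends on the degrees $p,q$, and combining it with graded commutativity requires care.  For instance, in the $\pi_1 \times \pi_1 \ra \pi_2$ case the commutativity sign is $-(-1)^{1\cdot 1} = +1$, so $\pairing{\iota_1 a}{\iota_2 v'} = \pairing{\iota_2 v'}{\iota_1 a}$, and both mixed contributions appear with the \emph{same} sign, producing the $-x\nabla t' - x'\nabla t$ terms; in the $\pi_0 \times \pi_1 \ra \pi_1$ case, by contrast, one must negate the formula from \eqref{prop:samelson-t-with-map-g} when converting $\pairing{\iota_2 t}{\iota_1(m,n)}$ to $\pairing{\iota_1(m,n)}{\iota_2 t}$, which is what produces the $+n\nabla t$ term (as opposed to $-n\nabla t$).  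Once the signs are handled carefully, the expressions collapse to the stated formulas verbatim.
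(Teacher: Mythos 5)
Your proposal is correct and follows essentially the same route as the paper: the set bijections and $\pi_0$-group law come from the product decomposition of $\Map(T,G_\omega)\rtimes T$ together with Proposition \ref{prop:map-into-g}, and the Samelson products are assembled by bilinearity from Proposition \ref{prop:map-into-g}(3), Proposition \ref{prop:samelson-t-with-map-g}, the vanishing of Samelson products in $\pi_*T$, and the commutation rule $\pairing{\alpha}{\beta}=-(-1)^{pq}\pairing{\beta}{\alpha}$ for the mixed terms. Your explicit sign-tracking for the mixed terms matches the stated formulas, so nothing further is needed.
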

\begin{proof}
This is a combination of what we have proved up to now.  The set
bijections are from the evident  product decomposition of
$\Map(T,G_\omega)\rtimes T$ and \eqref{prop:map-into-g}(1).  The group
structure in $\pi_0$ is by \eqref{prop:map-into-g}(2), while in higher
degrees it is follows easily from \eqref{prop:map-into-g}(2) since the
groups must be abelian.  

Part (3) is a consequence of the bilinearity of the Samelson product,
combined with \eqref{prop:map-into-g}(3), 
\eqref{prop:samelson-t-with-map-g}, and the fact that Samelson
products in $\pi_*T$ vanish since $T$ is abelian. 
\end{proof}

We can add in the action of $\Aut(L)$.

\begin{prop}\label{prop:homotopy-K-aut-T-action}
With respect to the identification of the homotopy groups of
$\Map(T,G_\omega)\rtimes T$ described in
\eqref{thm:homotopy-K-with-action}, the (left) action of $\Aut(L)$ on
$\Map(T,G_\omega)$ induces the following action on homotopy groups,
written in terms of the evident left actions of $\Aut(L)=\Aut(T)$ on $H^*(T;M)$.
\begin{align*}
  A\propto (m,n) &= ((A^{-1})^*m, (A^{-1})^*n),
& (m,n)\in H^1(T;B)\times H^2(T;C),
\\ 
  A\propto (t,y,x)&= (At, y, (A^{-1})^*x), 
& (t,y,x)\in L\times H^0(T;B)\times H^1(T;C),
\\
A \propto c &=  c,
& c\in H^0(T;C).
\end{align*}
\end{prop}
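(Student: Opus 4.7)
The plan is simple and largely bookkeeping, building directly on the identifications already established in Proposition \ref{prop:map-into-g} and Theorem \ref{thm:homotopy-K-with-action}. The first step is to identify how $\Aut(L)$ actually acts on the group $\Map(T,G_\omega)\rtimes T$. Under the inclusion $\Aut(L) \hookrightarrow D(T)$ as $A \mapsto (A,0)$, and via the group law of $W^T(G_\omega)$, conjugation by $(1,(A,0))$ acts on $(g,1) \in \Map(T,G_\omega)$ by $g \mapsto g\circ A^{-1}$ and on $(1,s) \in T$ by $s \mapsto As$. (The $A^{-1}$ rather than $A$ is forced by the convention that $D(T)$ acts on $T$ from the left.)

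Next, I would push this through the natural identifications of homotopy groups. Since the identifications
\[
\pi_k \Map(T, G_\omega) \approx H^{1-k}(T;B) \times H^{2-k}(T;C)
\]
of Proposition \ref{prop:map-into-g} are natural in $T$ (they come from the formula $G_\omega \simeq K(B,1)\times K(C,2)$ as a space, together with $\pi_k\Map(T,-) = [T,\Omega^k(-)]$), precomposition by $A^{-1}$ on $\Map(T,G_\omega)$ induces ordinary cohomological pullback $(A^{-1})^*$ on each factor. On the $T$ factor, the action on $\pi_1T = L$ is tautologically $A$ itself, and $\pi_{\geq 2} T = 0$ contributes nothing. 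Under the splitting of $\pi_*(\Map(T,G_\omega)\rtimes T)$ coming from the semidirect product, the claim for $\pi_1$ then reads $(t,y,x) \mapsto (At,\,(A^{-1})^* y,\,(A^{-1})^* x)$; but $y \in H^0(T;B) = B$ lies in the constants where $(A^{-1})^*$ acts trivially, which is exactly the stated formula. The $\pi_0$ and $\pi_2$ cases follow the same recipe, and $\pi_2$ is further trivial because $(A^{-1})^*$ acts trivially on $H^0(T;C) = C$.

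The only point requiring care, and therefore the main (mild) obstacle, is verifying that the isomorphism of Proposition \ref{prop:map-into-g} is genuinely natural with respect to self-maps of $T$. This is not entirely automatic because the group law on $\pi_0$ is twisted by $\wt\omega$; however, since $\wt\omega$ is itself defined via a natural bilinear cohomology operation, naturality persists on the nose. Given this, no further computation is needed — no Samelson products enter, and the signs all reduce to the single convention that conjugation by $(A,0)\in D(T)$ acts on $T$ as $A$ and on $\Map(T,G_\omega)$ by precomposition with $A^{-1}$.
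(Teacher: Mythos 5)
Your proposal is correct and is essentially the paper's own argument, which disposes of the statement with ``immediate by functoriality'': you have simply spelled out what that functoriality means, namely that conjugation by $(A,0)\in D(T)$ acts on $\Map(T,G_\omega)$ by precomposition with $A^{-1}$ and on $T$ by $A$, and that the identifications of \eqref{prop:map-into-g} are natural in $T$, so the induced maps are $(A^{-1})^*$ on the cohomological factors (trivial on $H^0$) and $A$ on $\pi_1T=L$. Nothing further is needed.
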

\begin{proof}
Immediate by functoriality.
\end{proof}

If we introduce the isomorphisms
\[
H^0(T;M)\approx M,\qquad H^1(T;M)\approx \Hom(L,M),\qquad
H^2(T;M)\approx \Hom(\Lambda_2L, M),
\]
then we have
\begin{align*}
  \pi_2(\Map(T,G_\omega)\rtimes T) &\approx C,
\\
  \pi_1(\Map(T,G_\omega)\rtimes T) &\approx L\times B\times
  \Hom( L, C),
\\
  \pi_0(\Map(T,G_\omega)\rtimes T) & \approx \Hom(L,B)\times
  \Hom(\Lambda_2L, C),
\end{align*}
and the formulas take the form
\begin{align*}
(m,n)\cdot (m',n') &= (m+m',\; \omega(m\otimes m') + n+n'),
\\
\pairing{(m,n)}{(t,y,x)} &= (0,\; mt,\; -\beta(m,y)-\omega(mt,m)+n\nabla t),
\\
\pairing{(t,y,x)}{(m,n)} &= (0,\; -mt,\; \beta(m,y)+\omega(mt, m) -
n\nabla t),
\\
\pairing{(t,y,x)}{(t',y',x')} &= -\beta(y,y')-x\nabla t' -x'\nabla t,
\\
A\propto (m,n)&= (mA^{-1}, n(\Lambda_2 A^{-1}))
\\
A\propto (t,y,x) &= (At,y,xA^{-1}).
\end{align*}
Here:
\begin{itemize}
\item the pairings $m,t\mapsto m\nabla t$ and $x,t\mapsto x\nabla t$
  are examples of Kronecker pairings $ H^1(T;M)\times H_1T\ra
  H^0(T;M)$, and so become  evaluation maps
  $\Hom(L,M)\otimes L\ra M$;
\item the pairing $n,t\mapsto n\nabla t\colon H^2(T;C)\times H_1T\ra
  H^1(T;C)$ becomes the contraction pairing $\nabla\colon
  \Hom(\Lambda_2 L,C)\times L\ra \Hom(L,C)$ 
  \eqref{prop:contraction-pairing}; 
\item 
the expression $-\wt\omega(m,m')$ becomes $\omega(m\otimes m')$ as
explained in \eqref{rem:sign-in-pi0-group-law};
\item
the action of $\Aut(L)=\Aut(T)$ on $H^k(T;M)$ becomes the evident
action on $\Hom(\Lambda_kL, M)$ defined by functoriality of
$\Lambda_k$ and composition.
\end{itemize}

\begin{proof}[Proof of \eqref{prop:homotopy-K-aut-T-action}]
This is read off from the calculations
\eqref{thm:homotopy-K-with-action},
\eqref{prop:homotopy-K-aut-T-action}, together with the relation
between Whitehead and Samelson products \eqref{eq:sam-whit-relation},
and the fact that Whitehead products $\pi_1\times \pi_k\ra \pi_k$
describe the action  $\pi_1\curvearrowright \pi_k$
\eqref{subsec:whitehead-products}. 
\end{proof}

We can now give the proof of the general result.
\begin{proof}[Proof of \eqref{thm:general-thm}]
We read off the results using the isomorphism
$\pi_*BW^T(\Sigma)\approx \pi_{*-1}W^T(\Sigma)$.  The Samelson
products become Whitehead products, up to a sign as described in
\eqref{eq:sam-whit-relation}.  
\end{proof}

\section{Conventions}
\label{sec:conventions}

\subsection{Orientations}\label{subsec:orientations}

Let $I=[0,1]$.  We use $S^n= I^n/\partial I^n$.  We orient $I^n$, and
thus  $S^n$, using the K\"unneth map, e.g., using  $I^n/\partial I^n 
\approx (I/\partial I)^{\sm n}$.   The boundary $\partial I^n$ is
homeomorphic to $S^{n-1}$, and its orientation is fixed by the boundary
map in singular homology.   When necessary, we take $(0,0)$ as the
base point of $\partial I^n$.

Examining the Eilenberg-Zilber map
shows that the face $\set{(t_1,\dots,t_n)}{t_i=1}\subset I^n$ receives
a positive 
orientation when $i$ is odd, and a negative orientation when $i$ is
even.  In particular, we see that an orientation preserving equivalence
$I/\partial I\ra \partial I^2$ goes counterclockwise around the
square; e.g., $(0,0)$ to $(1,0)$ to $(1,1)$ to $(0,1)$ to $(0,0)$.  

\subsection{Loops}

We compose paths in temporal order: thus $\gamma\cdot \delta$ is
defined when $\gamma(1)=\delta(0)$.  

The standard action of the fundamental group $\pi_1 X\curvearrowright
\pi_p X$ is from the 
\emph{left}.   We write it as $\gamma\propto\alpha$, $\gamma\in \pi_1X$ and
$\alpha\in \pi_p X$.  It is necessarily defined, in terms of the
``balloon on a string'' picture, by putting the \emph{end} of the string
$\gamma(1)$ at the basepoint of the $p$-sphere: i.e., via $S^p\ra
I\cup_{\{1\}} S^p\xra{(\gamma,\alpha)}X$.   

With this convention, we
may extend this action to a functor $\pi_p \colon \Pi_1X\ra \Ab$ from
the fundamental groupoid, for $p\geq2$.

We have for $k\geq 1$ a standard isomorphism
\[
\nu\colon \pi_k X\xra{\sim} \pi_{k-1}\Omega X,
\]
defined so that $f\colon S^k\ra X$ corresponds to $\nu f\colon
S^{k-1}\ra X$ given by $(\nu f)(x_1,\dots,x_{k-1})(t)=f(x_1,\dots,
x_{k-1},t)$.  This convention matches the most 
lexically convenient convention for the tensor-hom adjunction, i.e.,
$\Map_*(X\sm Y,Z)\approx \Map_*(X,\Map_*(Y,Z))$.  In particular, the
standard isomorphism
$\Map_*(X\sm S^p,Z) \approx \Map_*(X,\Omega^pZ)$ is 
determined in this way.

\begin{rem}\label{rem:convention-susp-loop}
If instead we consider $\nu'\colon \pi_kX\xra{\sim} \pi_{k-1}\Omega
X$, so that $\nu(f)(x_1,\dots,x_{k-1})(t) = f(t,x_1,\dots,x_{k-1})$,
then we have $(\nu' f) \sim (-1)^{k-1}(\nu f)$, with the sign coming
from the evident transposition $S^{k-1}\sm S^1\ra S^1\sm S^{k-1}$.
(The map 
$\nu'$ is the standard identification used in
\cite{whitehead-elements}, for 
instance.)
\end{rem}

\subsection{Eilenberg-Mac Lane spaces}

For us, an Eilenberg-Mac Lane space consists of a based space $K$
\emph{equipped} 
with a choice of isomorphism $A\xra{\sim} \pi_nK$ to its only
non-trivial homotopy group.  Such an object is unique up to canonical
homotopy equivalence (and in fact, up to contractible choice), and so
can be unambiguously denoted $K(A,n)$.  

We thus obtain a canonical weak equivalence $\Omega K(A,n)\approx
K(A,n-1)$, using the standard isomorphism $\nu\colon \pi_n\xra{\sim}
\pi_{n-1}\Omega$ to fix the identification of the homotopy group.

The identification $\wt{H}^n(X,A)\approx \pi_0\Map_*(X,K(A,n))$ is
fixed in the usual way, via a choice of tautological class $\iota\in
\wt{H}^n(K(A,n);A)\approx \Hom(H_n(K(A,n);A),A) \approx
\Hom(\pi_nK(A,n), A)$ corresponding to the identity map of $A$.
(Ultimately, this depends on the choice of the fundamental class in
$H_nS^n$, which defines the Hurewicz map.)

Using the standard identification $\Omega^pK(A,n)\approx K(A,n-p)$, we
obtain a standard natural isomorphism
\[
\pi_p \Map(X,K(A,n)) \approx \pi_0\Map(X, \Omega^pK(A,n)) \approx
H^{n-p}(X;A). 
\]
Similarly, we obtain a natural suspension isomorphism
\[
\wt{H}^{n-p}(X;A) \approx [X,\Omega^{p}K(A,n)]_* \approx [X\sm S^p,
K(A,n)]_* \approx \wt{H}^n(X\sm S^p; A).
\]

\subsection{Cup product}
\label{subsec:cup-product}

The cup product $\smile \colon H^m(X;A)\otimes H^n(X;B)\ra
H^{m+n}(X;A\otimes B)$ is represented by a map $\smile \colon
K(A,m)\sm K(B,n)\ra  K(A\otimes B,m+n)$, characterized by the property
that $\pi_mK(A,m)\otimes \pi_nK(B,n)\ra \pi_{m+n}K(A\otimes B,m+n)$
induces the identity map of $A\otimes B$.

Let $\epsilon \in H^1S^1$ be the tautological class, corresponding to
the map $S^1\ra K(\Z,1)$ sending the tautological class $\iota_1\in
\pi_1S^1$ 
to $1\in \Z=\pi_1K(\Z,1)$.  The map
\[
-\times \epsilon\colon \wt{H}^{n-1}(X;A) \ra \wt{H}^n(X\sm S^1;A) 
\]
coincides with the standard isomorphisms
\[
[X,K(A,n-1)]_*\approx [X,\Omega K(A,n)]_* \approx [X\sm S^1,
K(A,n)]_*,
\]
and thus with the suspension isomorphism described above. 

More precisely: using our standard identifications $K(A,n-1)\approx
\Omega K(A,n)$ and $H^k(X,B)\approx [X,K(B,k)]$, 
the composite
\[
[X, K(A,n-1)] \approx [X,\Omega K(A,n)] \ra [X\times S^1, K(A,n)]
\]
coincides with $-\times \epsilon\colon H^{n-1}(X;A)\ra H^n(X\times
S^1;A)$, whereas the composite
\[
[X, K(A,n-1)]\approx [X,\Omega K(A,n)] \ra [S^1\times X, K(A,n)]
\]
coincides with $(-1)^{n-1}(\epsilon\times -)\colon H^{n-1}(X;A)\ra
H^n(S^1\times X;A)$ (and not with $\epsilon\times -$ as one might
naively think).

\subsection*{Cohomology operations}

A based map $\psi\colon K(A,m)\ra K(B,n)$ induces a cohomology
operation $\psi\colon H^m(X;A)\ra H^n(X;B)$.  Taking loops gives a map
$\Omega\psi\colon K(A,m-1)\ra K(B,n-1)$ and a corresponding operation
$\Omega\psi\colon H^{m-1}(X;A)\ra H^{n-1}(X;B)$.  The diagram
\[\xymatrix{
{H^{m-1}(X;A)} \ar[r]^{\Omega\psi} \ar[d]_{\times\epsilon}
& {H^{n-1}(X;B)} \ar[d]^{\times\epsilon}
\\
{H^m(X\sm S^1;A)} \ar[r]_{\psi}
& {H^n(X\sm S^1;B)}
}\]
commutes, i.e.,
\[
\Omega\psi(x)\times \epsilon = \psi(x\times \epsilon).
\]
Note that the analogous diagram involving $\epsilon\times$ only
commutes \emph{up to  sign} depending on $m-n$, i.e., $(-1)^{n-1}\epsilon
\times \Omega\psi(x) = \psi((-1)^{m-1}\epsilon\times x)$.

A based map $\psi\colon K(A,p)\sm K(B,q)\ra K(C,n)$ determines a
cohomology operation $\psi\colon H^p(X;A)\times H^q(X;B)\ra H^n(X;C)$
in two variables.  Using the evident maps $\Omega X\sm Y\ra
\Omega(X\sm Y)$ and $X\sm \Omega Y\ra \Omega(X\sm Y)$, we can loop
$\psi$ in either of its two inputs, obtaining
\[
\Omega_1 \psi\colon K(A,p-1)\sm K(B,q)\ra K(C,n-1),\quad \Omega_2\psi
\colon K(A,p)\sm K(B,q-1)\ra K(C,n-1).
\]
The relation between these are given by 
\[
\Omega_1\psi(x,y)\times \epsilon = (-1)^{\len{y}}\psi(x\times
\epsilon, y),\quad   
\Omega_2\psi(x,y)\times \epsilon = \psi(x,y\times \epsilon), 
\]

\subsection{Groups and loop spaces}

Let $G$ be a topological group, and let $EG\ra BG$ be the universal
principal bundle.  We may regard $EG$ as having a \emph{left} action
by $G$.  

For a based space $(X,x_0)$, the path fibration $PX\ra X$ may be
defined by 
\[
PX = \set{\gamma\in \Map([0,1],X)}{\gamma(0)=x_0}.
\]
With this definition (with the free end of the path at $t=1$), the
composition law on $\Omega X$ extends 
naturally to an
``action'' $*\colon \Omega X\times PX\ra PX$.  

Taking $X=BG$, any lift $EG\ra P(BG)$ of the two projections gives
rise to a weak equivalence $G\ra \Omega G$, which furthermore is an
$H$-space map.

\subsection{Whitehead products}
\label{subsec:whitehead-products}

The Whitehead product $[-,-]\colon \pi_p\times \pi_q \ra \pi_{p+q-1}$
is defined via precomposition with 
\[
\bigl(\partial I^{p+q} \ra \partial I^{p+q}/\sim \bigr) = \bigl(I^p\times \partial I^q\cup \partial I^p\times
I^q \ra 
I^p/\partial I^p\vee I^q/\partial I^q\bigr),
\]
using the orientation convention of \eqref{subsec:orientations}. 
\begin{itemize}
\item If $p=q=1$, the Whitehead product is the loop commutator
  $[\gamma,\delta]= \gamma\delta\gamma^{-1}\delta^{-1}$.

\item If $\gamma\in \pi_1$ and $\alpha\in \pi_{q\geq2}$, then the
  Whitehead product is $[\gamma, 
  \alpha] = (\gamma\propto\alpha)-\alpha$.  

\item On inputs in dimensions $\geq2$, $[-,-]$ is bilinear.

\item In general, we have the commutation relation
  $[\beta,\alpha]=(-1)^{pq}[\alpha,\beta]$ for $\alpha\in \pi_p$ and
  $\beta\in \pi_q$.

\item We only need to care about the cases when  $p,q\leq 2$:
\begin{align*}
[\delta,\gamma] &=    [\gamma,\delta]^{-1}, && \gamma,\delta\in \pi_1,
\\
[\alpha,\gamma] &=   [\gamma,\alpha], && \gamma\in \pi_1,\; \alpha\in
\pi_2,
\\
[\beta,\alpha] &= [\alpha,\beta], && \alpha,\beta\in \pi_1.
\end{align*}
These conventions agree with those of
\cite{whitehead-elements}*{\S7.4}\footnote{Although he defines the
  products using disks, not cubes.  It is also unclear to me where he puts
  his basepoint; when $p,q\geq2$, the choice of basepoint ``shouldn't 
  matter''.}.
\end{itemize}

\subsection{Samelson products}
\label{subsec:samelson-products}

For a topological group $G$, the Samelson product
$\pairing{-}{-}\colon \pi_p\times \pi_q\ra \pi_{p+q}$ is defined in
the evident way using 
the commutator map $G\sm G\ra G$ sending $(x,y)\mapsto
xyx^{-1}y^{-1}$.  

The definition admits an extension to loop spaces.  For such spaces,
the Samelson product agrees with the Whitehead product up to a sign.
In fact, for $\alpha\in \pi_pX$ and $\beta\in \pi_q X$ we have that
\begin{equation}\label{eq:sam-whit-relation}
\nu[\alpha,\beta] = (-1)^{p-1}\pairing{\nu(\alpha)}{\nu(\beta)},
\end{equation}
according to \cite{whitehead-elements}*{7.10}\footnote{In fact, his
  formula uses $\nu'$  instead of
  $\nu$.  But this makes no difference, 
  as can be shown using  \eqref{rem:convention-susp-loop}.}.  In
particular, we 
have
\begin{align*}
  \pairing{\nu(\gamma)}{\nu(\delta)} &= \nu[\gamma,\delta], && \gamma,\delta\in
                                                  \pi_1,
\\
  \pairing{\nu(\gamma)}{\nu(\alpha)} &= \nu[\gamma,\alpha], && \gamma\in
                                                  \pi_1,\,\alpha\in
                                                  \pi_2,
\\
  \pairing{\nu(\alpha)}{\nu(\gamma)} &= -\nu[\alpha,\gamma]=-\nu[\gamma,\alpha], &&
   \gamma\in  \pi_1,\, \alpha\in \pi_2,
\\
  \pairing{\nu(\alpha)}{\nu(\beta)} &= -\nu[\alpha,\beta], &&
                                                              \alpha,\beta\in \pi_2. 
\end{align*}
We note the commutation relation 
\[
\pairing{\alpha}{\beta} = -(-1)^{pq}\pairing{\beta}{\alpha},\qquad
\alpha\in \pi_pG,\; \beta\in \pi_qG.
\]

\begin{bibdiv}
\begin{biblist}

\bib{ando-power-op-loop-group}{article}{
  author={Ando, Matthew},
  title={Power operations in elliptic cohomology and representations of loop groups},
  journal={Trans. Amer. Math. Soc.},
  volume={352},
  date={2000},
  number={12},
  pages={5619--5666},
  issn={0002-9947},
}

\bib{ando-sigma-orientation-analytic}{article}{
  author={Ando, Matthew},
  title={The sigma orientation for analytic circle-equivariant elliptic cohomology},
  journal={Geom. Topol.},
  volume={7},
  date={2003},
  pages={91--153 (electronic)},
  issn={1465-3060},
}

\bib{ando-basterra-witten-genus}{article}{
  author={Ando, Matthew},
  author={Basterra, Maria},
  title={The Witten genus and equivariant elliptic cohomology},
  journal={Math. Z.},
  volume={240},
  date={2002},
  number={4},
  pages={787--822},
  issn={0025-5874},
}

\bib{ando-miller-grojnowskis-ell-coh}{article}{
  author={Ando, Matthew},
  author={Miller, Haynes},
  title={Ian Grojnowski's ``delocalized equivariant elliptic cohomology''},
  conference={ title={Elliptic cohomology}, },
  book={ series={London Math. Soc. Lecture Note Ser.}, volume={342}, publisher={Cambridge Univ. Press, Cambridge}, },
  date={2007},
  pages={111--113},
}

\bib{baum-brylinski-macpherson-delocalisee}{article}{
  author={Baum, Paul},
  author={Brylinski, Jean-Luc},
  author={MacPherson, Robert},
  title={Cohomologie \'equivariante d\'elocalis\'ee},
  language={French, with English summary},
  journal={C. R. Acad. Sci. Paris S\'er. I Math.},
  volume={300},
  date={1985},
  number={17},
  pages={605--608},
  issn={0249-6291},
}

\bib{earle-eells-diffeo-cpt-riemann}{article}{
  author={Earle, C. J.},
  author={Eells, J.},
  title={The diffeomorphism group of a compact Riemann surface},
  journal={Bull. Amer. Math. Soc.},
  volume={73},
  date={1967},
  pages={557--559},
  issn={0002-9904},
}

\bib{etingof-frenkel-cent-ext-current-groups-2d}{article}{
  author={Etingof, Pavel I.},
  author={Frenkel, Igor B.},
  title={Central extensions of current groups in two dimensions},
  journal={Comm. Math. Phys.},
  volume={165},
  date={1994},
  number={3},
  pages={429--444},
  issn={0010-3616},
}

\bib{ganter-stringy-power}{article}{
  author={Ganter, Nora},
  title={Stringy power operations in Tate $K$-theory},
  eprint={arXiv:math/0701565},
  date={2007},
}

\bib{ganter-power-op-orbifold-tate-k}{article}{
  author={Ganter, Nora},
  title={Power operations in orbifold Tate $K$-theory},
  journal={Homology Homotopy Appl.},
  volume={15},
  date={2013},
  number={1},
  pages={313--342},
  issn={1532-0073},
}

\bib{ganter-elliptic-weyl}{article}{
  author={Ganter, Nora},
  title={The elliptic Weyl character formula},
  journal={Compos. Math.},
  volume={150},
  date={2014},
  number={7},
  pages={1196--1234},
  issn={0010-437X},
}

\bib{ganter-categorical-tori}{article}{
  author={Ganter, Nora},
  title={Categorical tori},
  eprint={arXiv:1406.7046},
  date={2015},
}

\bib{greenlees-rational-equivariant-elliptic}{article}{
  author={Greenlees, J. P. C.},
  title={Rational $S^1$-equivariant elliptic cohomology},
  journal={Topology},
  volume={44},
  date={2005},
  number={6},
  pages={1213--1279},
  issn={0040-9383},
}

\bib{grojnowski-delocalized-elliptic}{article}{
  author={Grojnowski, I.},
  title={Delocalised equivariant elliptic cohomology},
  conference={ title={Elliptic cohomology}, },
  book={ series={London Math. Soc. Lecture Note Ser.}, volume={342}, publisher={Cambridge Univ. Press}, place={Cambridge}, },
  date={2007},
  pages={114--121},
}

\bib{kitchloo-dominant-k-theory}{article}{
  author={Kitchloo, Nitu},
  title={Dominant $K$-theory and integrable highest weight representations of Kac-Moody groups},
  journal={Adv. Math.},
  volume={221},
  date={2009},
  number={4},
  pages={1191--1226},
  issn={0001-8708},
}

\bib{kitchloo-quantization-modular-functor-eq-ell}{article}{
  author={Kitchloo, Nitu},
  title={Quantization of the modular functor and equivariant elliptic cohomology},
  date={2014},
  eprint={arXiv:1407.6698v2},
}

\bib{looijenga-root-systems}{article}{
  author={Looijenga, Eduard},
  title={Root systems and elliptic curves},
  journal={Invent. Math.},
  volume={38},
  date={1976/77},
  number={1},
  pages={17--32},
  issn={0020-9910},
}

\bib{lurie-elliptic-survey}{article}{
  author={Lurie, J.},
  title={A survey of elliptic cohomology},
  conference={ title={Algebraic topology}, },
  book={ series={Abel Symp.}, volume={4}, publisher={Springer, Berlin}, },
  date={2009},
  pages={219--277},
}

\bib{lurie-dag9}{article}{
  author={Lurie, Jacob},
  title={Derived Algebraic Geometry IX: Closed immersions},
  date={2011},
  eprint={http://ww.math.harvard.edu/~lurie/papers/DAG-IX.pdf},
}

\bib{mumford-abvar}{book}{
  author={Mumford, David},
  title={Abelian varieties},
  series={Tata Institute of Fundamental Research Studies in Mathematics, No. 5 },
  publisher={Published for the Tata Institute of Fundamental Research, Bombay; Oxford University Press, London},
  date={1970},
  pages={viii+242},
}

\bib{porta-dcag-1}{article}{
  author={Porta, Mauro},
  title={Derived complex analytic geometry I: GAGA theorems},
  date={2015},
  eprint={arXiv:1506.09042},
}

\bib{rosu-elliptic-rigidity}{article}{
  author={Rosu, Ioanid},
  title={Equivariant elliptic cohomology and rigidity},
  journal={Amer. J. Math.},
  volume={123},
  date={2001},
  number={4},
  pages={647--677},
  issn={0002-9327},
}

\bib{segal-elliptic-cohomology}{article}{
  author={Segal, Graeme},
  title={Elliptic cohomology (after Landweber-Stong, Ochanine, Witten, and others)},
  note={S\'eminaire Bourbaki, Vol.\ 1987/88},
  journal={Ast\'erisque},
  number={161-162},
  date={1988},
  pages={Exp.\ No.\ 695, 4, 187--201 (1989)},
  issn={0303-1179},
}

\bib{stolz-teichner-susy-and-gen-coh}{article}{
  author={Stolz, Stephan},
  author={Teichner, Peter},
  title={Supersymmetric field theories and generalized cohomology},
  conference={ title={Mathematical foundations of quantum field theory and perturbative string theory}, },
  book={ series={Proc. Sympos. Pure Math.}, volume={83}, publisher={Amer. Math. Soc.}, place={Providence, RI}, },
  date={2011},
  pages={279--340},
}

\bib{whitehead-elements}{book}{
  author={Whitehead, George W.},
  title={Elements of homotopy theory},
  series={Graduate Texts in Mathematics},
  volume={61},
  publisher={Springer-Verlag, New York-Berlin},
  date={1978},
  pages={xxi+744},
  isbn={0-387-90336-4},
}

\end{biblist}
\end{bibdiv}

\end{document}